\newtheorem{thm}{Theorem}[section]
\newtheorem{lem}[thm]{Lemma}
\theoremstyle{definition}
\newcommand{\scr}[1]{\mathscr #1}
\definecolor{wco}{rgb}{0.5,0.2,0.3}
\renewcommand{\bar}{\overline}
\numberwithin{equation}{section}
\newtheorem{rem}{Remark}[section]
\newcommand{\ua}{\uparrow}
\renewcommand{\hat}{\widehat}
\renewcommand{\tilde}{\widetilde}
\title{{\bf Convergence rates of truncated EM scheme for NSDDEs\thanks{Supported by NSFC(No., 11561027, 11661039), NSF of Jiangxi(No., 20171BAB201010, 20171BCB23046)}}
}
\author{
{\bf  Li Tan$^{a,b}$\, and \, Chenggui Yuan$^c$}\\
 \footnotesize{$^{a}$ School of Statistics, Jiangxi University of Finance and Economics, Nanchang, Jiangxi, 330013, P. R. China}\\
 \footnotesize{$^{b}$ Research Center of Applied Statistics, Jiangxi University of Finance and Economics,}\\
  \footnotesize{ Nanchang, Jiangxi, 330013, P. R. China}\\
\footnotesize{$^c$ Department of Mathematics, Swansea University, Swansea, SA2 8PP, U. K. }\\
\footnotesize{tltanli@126.com, C.Yuan@swansea.ac.uk}}
\begin{document}
\def\R{\mathbb R}  \def\ff{\frac} \def\ss{\sqrt} \def\B{\mathbf
B}
\def\N{\mathbb N} \def\kk{\kappa} \def\m{{\bf m}}
\def\dd{\delta} \def\DD{\Delta} \def\vv{\varepsilon} \def\rr{\rho}
\def\<{\langle} \def\>{\rangle} \def\GG{\Gamma} \def\gg{\gamma}
  \def\nn{\nabla} \def\pp{\partial} \def\EE{\scr E}
\def\d{\text{\rm{d}}} \def\bb{\beta} \def\aa{\alpha} \def\D{\scr D}
  \def\si{\sigma} \def\ess{\text{\rm{ess}}}
\def\beg{\begin} \def\beq{\begin{equation}}  \def\F{\scr F}
\def\Ric{\text{\rm{Ric}}} \def\Hess{\text{\rm{Hess}}}
\def\e{\text{\rm{e}}} \def\ua{\underline a} \def\OO{\Omega}  \def\oo{\omega}
 \def\tt{\tilde} \def\Ric{\text{\rm{Ric}}}
\def\cut{\text{\rm{cut}}} \def\P{\mathbb P} \def\ifn{I_n(f^{\bigotimes n})}
\def\C{\scr C}      \def\aaa{\mathbf{r}}     \def\r{r}
\def\gap{\text{\rm{gap}}} \def\prr{\pi_{{\bf m},\varrho}}  \def\r{\mathbf r}
\def\Z{\mathbb Z} \def\vrr{\varrho} \def\ll{\lambda}
\def\L{\scr L}\def\Tt{\tt} \def\TT{\tt}\def\II{\mathbb I}
\def\i{{\rm in}}\def\Sect{{\rm Sect}}\def\E{\mathbb E} \def\H{\mathbb H}
\def\M{\scr M}\def\Q{\mathbb Q} \def\texto{\text{o}} \def\LL{\Lambda}
\def\Rank{{\rm Rank}} \def\B{\scr B} \def\i{{\rm i}} \def\HR{\hat{\R}^d}
\def\to{\rightarrow}\def\l{\ell}
\def\8{\infty}\def\Y{\mathbb{Y}}

\maketitle

\begin{abstract}
This paper is concerned with strong convergence of the truncated Euler-Maruyama scheme for neutral stochastic differential delay equations driven by Brownian motion and pure jumps respectively. Under local Lipschitz condition, convergence rates of the truncated EM scheme are given.
\end{abstract}
\noindent
\noindent
 {\bf Keywords}: neutral stochastic differential delay equations; truncated Euler-Maruyama scheme; convergence rate


\section{Introduction}
Recently, numerical methods have been widely considered since most equations can not be solved explicitly. Most works on numerical analysis were based on global Lipschitz conditions until Higham et al. \cite{hms02} published a paper studying stochastic differential equations (SDEs) under local Lipschitz conditions. Since then, numerical methods including explicit schemes and implicit schemes have been extensively studied, e.g., strong convergence of  Euler-Maruyama (EM) scheme under local Lipschitz condition \cite{by13}, \cite{ks14}, \cite{m03}, strong convergence of backward EM and $\theta$-EM scheme without global Lipschitz condition \cite{ms13}, \cite{ty162}, \cite{zwh152}, stability of EM scheme under local Lipschitiz condition \cite{hms03}, stability of implicit scheme \cite{mm15}, \cite{zwh15} and the references therein.

Except for the famous EM scheme, some modified Euler-type methods have been developed. For example, in order to deal with SDEs under superlinearly growing and globally one-sided Lipschitz drift coefficient, Hutzenthaler et al. \cite{hjk12} proposed a tamed Euler scheme. Sabanis \cite{s13}, \cite{s15} studied the tamed EM scheme of SDEs under global one-sided Lipschitz and local one-sided Lipschitz coefficient respectively. Based on the classic truncated method, Mao \cite{m15} developed an explicit numerical method called the truncated EM scheme, convergence rate was given in \cite{m16} for SDEs under local Lipschitz condition plus Khasminskii-type condition.

Stochastic differential delay equation (SDDE) is a kind of process that depends on the past states of the system. It plays an important role in theoretical and practical analysis. Moreover, some equations not only depend on past and present states but also involve derivatives with delays as well as the function itself, those equations called neutral stochastic differential delay equations (NSDDEs) also widely exist. Based on \cite{m15}, in this paper, we are going to study strong convergence of the truncated EM scheme for NSDDEs.

This paper is organized as follows: In Section 2, we investigate the convergence rates for NSDDEs driven by Brownian motion, including convergence rate at time $T$ and over a finite time interval under local Lipschitz conditions. In Section 3, NSDDEs driven by pure jumps are considered, convergence rates are obtained under one-sided Lipschitz and superlinearly drift coefficients.

\section{\large Convergence rates for NSDDEs driven by Brownian motion}
\subsection{\normalsize Truncated EM scheme}
Throughout this paper, we let $(\Omega,\mathscr{F},\{\mathscr{F}_t\}_{t\ge 0}, \mathbb{P})$ be a complete filtered probability space satisfying the usual conditions. $(\mathbb{R}^n, \langle\cdot,\cdot\rangle,|\cdot|)$ is an $n$-dimensional Euclidean space. Denote $\mathbb{R}^{n\times d}$ by the set of all $n\times d$ matrices $A$ with trace norm $\|A\|=\sqrt{\mbox{trace}(A^TA)}$, where $A^T$ is the transpose of a matrix $A$. For a given $\tau\in(0,\infty)$, denote $\mathcal{C}([-\tau,0];\mathbb{R}^n)$ by all continuous functions $\zeta$ from $[-\tau,0]$ to $\mathbb{R}^n$ with uniform norm $\|\zeta\|_\infty=\sup_{-\tau\le\theta\le 0}|\zeta(\theta)|$. $W(t)$ is a $d$-dimensional Brownian motion defined on $(\Omega,\mathscr{F},\{\mathscr{F}_t\}_{t\ge 0}, \mathbb{P})$. Consider the following NSDDE on $\mathbb{R}^n$:
\begin{equation}\label{brownian}
\begin{split}
\mbox{d}[X(t)-D(X(t-\tau))]=&b(X(t), X(t-\tau))\mbox{d}t+\sigma(X(t), X(t-\tau))\mbox{d}W(t), t\ge 0
\end{split}
\end{equation}
with initial data $X(t)=\xi(t)\in\mathcal{L}^p_{\mathscr{F}_0}([-\tau,0];\mathbb{R}^n)$ for $t\in[-\tau,0]$, that is, $\xi$ is an $\mathscr{F}_0$-measurable $\mathcal{C}([-\tau,0];\mathbb{R}^n)$-valued random variable with $\mathbb{E}\|\xi\|^p_\infty<\infty$ for $p\ge 2$. Here, $D:\mathbb{R}^n\rightarrow\mathbb{R}^n$, $b:\mathbb{R}^n\times\mathbb{R}^n\rightarrow\mathbb{R}^n$, and $\sigma:\mathbb{R}^n\times\mathbb{R}^n\rightarrow\mathbb{R}^{n\times d}$ are continuous functions. Firstly, we impose some assumptions on coefficients in order to estimate the $p$-th moment of exact and numerical solutions.
\begin{enumerate}
\item[{\bf (A1)}] $D(0)=0$, and there exists a $\kappa\in(0,1)$ such that for $x,y\in\mathbb{R}^n$
\begin{equation*}
|D(x)-D(y)|\le\kappa|x-y|.
\end{equation*}
\item[{\bf (A2)}] For any $R>0$, there exists a positive constant $L_R$ such that
\begin{equation*}
|b(x,y)-b(\bar{x},\bar{y})|\vee\|\sigma(x,y)-\sigma(\bar{x},\bar{y})\|\le L_R(|x-\bar{x}|+|y-\bar{y}|)
\end{equation*}
for $x,y,\bar{x},\bar{y}\in\mathbb{R}^n$ with $|x|\vee|y|\vee|\bar{x}|\vee|\bar{y}|\le R$.
\item[{\bf (A3)}] There exists a pair of constants $p>2$ and $L_1>0$ such that
\begin{equation*}
\langle x-D(y), b(x,y)\rangle+\frac{p-1}{2}\|\sigma(x,y)\|^2\le L_1(1+|x|^2+|y|^2)
\end{equation*}
for $x,y\in\mathbb{R}^n$.
\end{enumerate}

\begin{lem}\label{exactbound}
{\rm Under assumptions (A1)-(A3), there exists a unique solution to \eqref{brownian}. Moreover, for any $T>0$, there exists a positive constant $C$ such that
\begin{equation}\label{exactbound1}
\sup\limits_{0\le t\le T}\mathbb{E}|X(t)|^p\le C,
\end{equation}
and
\begin{equation*}
\mathbb{P}(\tau_R\le T)\le \frac{C}{R^p},
\end{equation*}
where $R$ is a positive real number and $\tau_R$ is a stopping time defined by $\tau_R=\inf\{t\ge 0: |X(t)|\ge R\}$.
}
\end{lem}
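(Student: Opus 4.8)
The plan is to follow the classical Khasminskii-type argument, modified to absorb the neutral term through the contraction constant $\kappa<1$. \textbf{Existence and uniqueness.} First I would truncate the coefficients: for each integer $R\ge1$ put $b_R(x,y)=b(\pi_R(x),\pi_R(y))$, $\sigma_R(x,y)=\sigma(\pi_R(x),\pi_R(y))$, where $\pi_R(x)=x$ for $|x|\le R$ and $\pi_R(x)=Rx/|x|$ otherwise; by (A2) these are globally Lipschitz, so together with (A1) the neutral SDDE with data $(\xi,D,b_R,\sigma_R)$ has a unique global $\mathcal{L}^p$-solution $X_R$ by the standard fixed-point theory for neutral SDDEs. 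Setting $\rho_R=\inf\{t\ge0:|X_R(t)|\ge R\}$, uniqueness gives $X_R\equiv X_{R'}$ on $[0,\rho_R\wedge\rho_{R'}]$ for $R'\ge R$, so the $X_R$ paste into a solution $X$ on $[0,\rho_\infty)$ with $\rho_\infty=\lim_R\rho_R$. Non-explosion ($\rho_\infty=\infty$ a.s., whence $\tau_R=\rho_R$) will follow from the a priori bound below.

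\textbf{The $p$-th moment bound.} The crucial device is to work with $Y(t):=X(t)-D(X(t-\tau))$. By (A1) and convexity of $r\mapsto r^p$ with weights $1-\kappa$ and $\kappa$,
\begin{equation*}
|X(t)|^p\le\frac{1}{(1-\kappa)^{p-1}}|Y(t)|^p+\kappa\,|X(t-\tau)|^p,\qquad |Y(t)|^p\le 2^{p-1}|X(t)|^p+2^{p-1}\kappa^p|X(t-\tau)|^p.
\end{equation*}
Applying It\^o's formula to $|Y(t)|^p$ (valid since $p>2$) and bounding the drift by (A3) gives
\begin{equation*}
\d|Y(t)|^p\le p\,L_1|Y(t)|^{p-2}\bigl(1+|X(t)|^2+|X(t-\tau)|^2\bigr)\,\d t+\d M(t),
\end{equation*}
with $M$ a local martingale. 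Young's inequality on the right-hand side, then the two displayed inequalities, yields $\d|Y(t)|^p\le C\bigl(1+|X(t)|^p+|X(t-\tau)|^p\bigr)\d t+\d M(t)$. Stopping at $\rho_R$ removes the martingale (on $[0,\rho_R]$ one has $|X|\le R$); taking expectations and then the supremum over $[0,t]$, and using $\sup_{0\le s\le t}\E|X((s-\tau)\wedge\rho_R)|^p\le\E\|\xi\|_\infty^p+\sup_{0\le s\le t}\E|X(s\wedge\rho_R)|^p$ together with the first displayed inequality once more, leads to
\begin{equation*}
(1-\kappa)\sup_{0\le s\le t}\E|X(s\wedge\rho_R)|^p\le C+C\int_0^t\sup_{0\le u\le s}\E|X(u\wedge\rho_R)|^p\,\d s.
\end{equation*}
Since $\kappa<1$, Gronwall's inequality gives $\sup_{0\le t\le T}\E|X(t\wedge\rho_R)|^p\le C$ with $C$ independent of $R$; letting $R\to\infty$ and invoking Fatou's lemma proves \eqref{exactbound1} and forces $\P(\rho_\infty\le T)=0$ for all $T$.

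\textbf{The stopping-time tail.} On $\{\tau_R\le T\}$ path-continuity gives $|X(\tau_R\wedge T)|\ge R$, so
\begin{equation*}
R^p\,\P(\tau_R\le T)\le\E\bigl[|X(\tau_R\wedge T)|^p\mathbf{1}_{\{\tau_R\le T\}}\bigr]\le\E|X(\tau_R\wedge T)|^p\le C,
\end{equation*}
the final bound being the $R$-uniform estimate of the previous step evaluated at the bounded stopping time $\tau_R\wedge T$ (i.e.\ run the same It\^o--Gronwall loop localized by $\tau_R$ and take $t=T$). Dividing by $R^p$ gives the assertion.

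\textbf{Main obstacle.} The only genuine subtlety is the constant bookkeeping in the moment estimate: $|X|^p$ must be split through $Y$ with exactly the weight $(1-\kappa)^{1-p}$ on $Y$ and weight $\kappa$ on the delayed term, so that after taking the supremum the term $\kappa\sup\E|X|^p$ can be absorbed into the left-hand side; any cruder splitting destroys the gain from $\kappa<1$ and the Gronwall loop will not close. Ensuring all constants are independent of $R$ (needed for Fatou) and handling the initial segment $[-\tau,0]$ are routine once this structure is in place.
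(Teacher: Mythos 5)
Your argument is correct and follows essentially the route the paper intends: the paper's proof of this lemma is only a sketch (``With (A1) and (A3), it is easy to show \eqref{exactbound1}''), and the details you supply --- It\^o's formula applied to $|X(t)-D(X(t-\tau))|^p$, the convexity splitting with weights $1-\kappa$ and $\kappa$ to absorb the neutral term, and the Gronwall loop --- are exactly the computations the paper carries out explicitly in the analogous Lemma \ref{numbound} for the numerical solution. Your treatment of the tail bound via the stopped process $X(\tau_R\wedge T)$ is in fact slightly more careful than the paper's own one-line Chebyshev step.
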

\begin{proof}
With (A2), the uniqueness of solution to \eqref{brownian} is guaranteed. With (A1) and (A3), it is easy to show \eqref{exactbound1}. Then by \eqref{exactbound1}, we get
\begin{equation*}
\begin{split}
\mathbb{P}(\tau_R\le T)=\mathbb{E}\left({\bf I}_{\{\tau_R\le T\}}\frac{|X(\tau_R)|^p}{R^p}\right)\le\frac{1}{R^p}\sup\limits_{0\le t \le T}\mathbb{E}|X(t)|^p\le\frac{C}{R^p}.
\end{split}
\end{equation*}
This completes the proof.
\end{proof}

Define a strictly increasing continuous function $f:\mathbb{R}_+\rightarrow\mathbb{R}_+$ such that $f(r)\rightarrow\infty$ as $r\rightarrow\infty$ and for any $r\ge 0$,
\begin{equation*}\label{fr}
\sup\limits_{|x|\vee|y|\le r}(|b(x,y)|\vee\|\sigma(x,y)\|)\le f(r).
\end{equation*}
Obviously, the inverse function $f^{-1}$ is a strictly increasing continuous function from $[f(0),\infty)$ to $\mathbb{R}_+$. Define a strictly decreasing function $g:(0,1]\rightarrow(0,\infty)$ such that
\begin{equation}\label{gdelta}
g(\Delta)\ge1,~~\lim\limits_{\Delta\rightarrow 0}g(\Delta)=\infty,~~\mbox{and} ~~\Delta^{1/4}g(\Delta)\le 1.
\end{equation}
For a given stepsize $\Delta\in(0,1]$, we define the following truncated functions
\begin{equation*}
b_\Delta(x,y)=b\left((|x|\wedge f^{-1}(g(\Delta)))\frac{x}{|x|},(|y|\wedge f^{-1}(g(\Delta)))\frac{y}{|y|}\right),
\end{equation*}
and
\begin{equation*}
\sigma_\Delta(x,y)=\sigma\left((|x|\wedge f^{-1}(g(\Delta)))\frac{x}{|x|},(|y|\wedge f^{-1}(g(\Delta)))\frac{y}{|y|}\right)
\end{equation*}
for $x,y\in\mathbb{R}^n$, where we set $x/|x|=0$ for $x=0$. Obviously, for any $x,y\in\mathbb{R}^n$
\begin{equation}\label{bounddelta}
|b_\Delta(x,y)|\vee\|\sigma_\Delta(x,y)\|\le g(\Delta).
\end{equation}
We now introduce the truncated EM scheme for \eqref{brownian}. Given any time $T>0$, assume that there exist two positive integers such that $\Delta=\frac{\tau}{m}=\frac{T}{M}$. For $k=-m, \cdots, 0$, set $y_{t_k}=\xi(k\Delta)$; For $k=0, 1, \cdots,M-1$, we form
\begin{equation}\label{discrete}
\begin{split}
y_{t_{k+1}}-D(y_{t_{k+1-m}})=&y_{t_k}-D(y_{t_{k-m}})+b_\Delta(y_{t_{k}}, y_{t_{k-m}})\Delta+\sigma_\Delta(y_{t_{k}}, y_{t_{k-m}})\Delta W_{t_k},
\end{split}
\end{equation}
where $t_k=k\Delta$, $\Delta W_{t_k}=W(t_{k+1})-W(t_k)$. Define the corresponding continuous form of \eqref{discrete} as follows: set $Y(t)=\xi(t)$ for $t\in[-\tau,0]$, and for any $t\in[0,T]$, we form
\begin{equation}\label{continuous}
\begin{split}
Y(t)-D(\bar{Y}(t-\tau))=&\xi(0)-D(\xi(-\tau))+\int_0^tb_\Delta(\bar{Y}(s), \bar{Y}(s-\tau))\mbox{d}s\\
&+\int_0^t\sigma_\Delta(\bar{Y}(s), \bar{Y}(s-\tau))\mbox{d}W(s),
\end{split}
\end{equation}
where $\bar{Y}(t)$ is defined by
\begin{equation*}
\bar{Y}(t):=y_{t_k} \quad \mbox{for} \quad t\in[t_k, t_{k+1}),
\end{equation*}
thus $\bar{Y}(t-\tau)=y_{t_{k-m}}$.

To obtain the $p$-th moment of numerical solutions, we impose another assumption.
\begin{enumerate}
\item[{\bf (A4)}] There exists a pair of constants $p>2$ and $\bar{L}_1>0$ such that
\begin{equation*}
\langle x-D(y), b_\Delta(x,y)\rangle+\frac{p-1}{2}\|\sigma_\Delta(x,y)\|^2\le \bar{L}_1(1+|x|^2+|y|^2)
\end{equation*}
for any $x,y\in\mathbb{R}^n$.
\end{enumerate}

\begin{rem}\label{example1}
{\rm Consider the special case with $D(\cdot)=0$, if we choose a number $\Delta^*\in(0,1]$ and a strictly decreasing function $g:(0,\Delta^*]\rightarrow(0,\infty)$ such that for any $\Delta\in(0,\Delta^*]$
\begin{equation*}
f(2)\le g(\Delta^*),~~\lim\limits_{\Delta\rightarrow 0}g(\Delta)=\infty,~~\mbox{and} ~~\Delta^{1/4}g(\Delta)\le 1.
\end{equation*}
Then, we can show that for any functions satisfying (A3), we all have
\begin{equation}\label{remmm}
\langle x, b_\Delta(x,y)\rangle+\frac{p-1}{2}\|\sigma_\Delta(x,y)\|^2\le \bar{L}_1(1+|x|^2+|y|^2)
\end{equation}
for any $x,y\in\mathbb{R}^n$, where $\bar{L}_1=2L_1$. That is, if SDDEs with no neutral term are considered, assumption (A4) can be eliminated. The proof of \eqref{remmm} is shown in Appendix.
}
\end{rem}

\begin{rem}\label{example2}
{\rm Generally, there are lots of functions such that $D(x),b(x,y),\sigma(x,y)$ satisfy (A1)-(A3) and the corresponding $b_\Delta(x,y),\sigma_\Delta(x,y)$ satisfy (A4). For example, if we define
$D(y)=-ay$, $b(x,y)=x+ay-(x+ay)^3$, $\sigma(x,y)=|x+ay|^{\frac{3}{2}}$ for $a\in(-1,1)$, $x,y\in\mathbb{R}$, we can show that $D$ satisfies (A1), $b$ and $\sigma$ are locally Lipschitz. In fact, for $|x|\vee|y|\vee|\bar{x}|\vee|\bar{y}|\le R$,
\begin{equation*}
\begin{split}
|b(x,y)-b(\bar{x},\bar{y})|=|x+ay-(x+ay)^3-\bar{x}-a\bar{y}+(\bar{x}+a\bar{y})^3|\le(1+6R^2)(|x-\bar{x}|+|y-\bar{y}|),
\end{split}
\end{equation*}
and by mean value theorem in $\mathbb{R}^2$,
\begin{equation*}
\begin{split}
\|\sigma(x,y)-\sigma(\bar{x},\bar{y})\|=\left||x+ay|^{\frac{3}{2}}-|\bar{x}+a\bar{y}|^{\frac{3}{2}}|\right|\le3\sqrt{2R}(|x-\bar{x}|+|y-\bar{y}|).
\end{split}
\end{equation*}
Moreover, for any $p>3$, we have
\begin{equation*}
\langle x-D(y), b(x,y)\rangle+\frac{p-1}{2}\|\sigma(x,y)\|^2=|x+ay|^2\left(1+\frac{(p-1)^2}{16}\right)\le C(1+|x|^2+|y|^2).
\end{equation*}
If we choose a number $\Delta^*\in(0,1]$ and a strictly decreasing function $g:(0,\Delta^*]\rightarrow(0,\infty)$ such that for any $\Delta\in(0,\Delta^*]$, $f(2)\le g(\Delta^*)$. Then, similar to that of Remark \ref{example1}, we immediately obtain (A4). Another example is $D(y)=\frac{1}{2}\sin y$, $b(x,y)=x-x^3+\cos y$, $\sigma(x,y)=|x|^{\frac{3}{2}}$, one can also show that (A1)-(A4) is satisfied. The detailed proof is shown in Appendix. In fact, we can further show those $D(x),b(x,y),\sigma(x,y)$ satisfy assumptions (A5)-(A7) below.
}
\end{rem}

\begin{lem}\label{exa-num}
{\rm For any $\hat{p}>0$, there exists a positive constant $C$ independent of $\Delta$ such that
\begin{equation}\label{mediu}
\sup\limits_{t\in[0,T]}\mathbb{E}|Y(t)-\bar{Y}(t)|^{\hat{p}}\le C\Delta^{\hat{p}/2}(g(\Delta))^{\hat{p}},
\end{equation}
and
\begin{equation*}
\lim\limits_{\Delta\rightarrow 0}\sup\limits_{t\in[0,T]}\mathbb{E}|Y(t)-\bar{Y}(t)|^{\hat{p}}=0.
\end{equation*}
}
\end{lem}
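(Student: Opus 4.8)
The plan is to estimate the one-step increment $Y(t)-\bar Y(t)$ directly from the continuous form \eqref{continuous}. Fix $t\in[0,T]$ and let $k$ be the index with $t\in[t_k,t_{k+1})$, so that $\bar Y(t)=y_{t_k}=Y(t_k)$. From \eqref{continuous}, the difference $Y(t)-Y(t_k)$ equals the difference of the neutral terms $D(\bar Y(t-\tau))-D(\bar Y(t_k-\tau))$ plus the drift integral $\int_{t_k}^t b_\Delta(\bar Y(s),\bar Y(s-\tau))\,\d s$ plus the diffusion integral $\int_{t_k}^t \sigma_\Delta(\bar Y(s),\bar Y(s-\tau))\,\d W(s)$. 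The first subtlety is that $Y(t)-\bar Y(t)$ is not simply $Y(t)-Y(t_k)$, because of the neutral term: writing $Z(t):=Y(t)-D(\bar Y(t-\tau))$ for the "shifted" process, one has $Z(t)-Z(t_k)=\int_{t_k}^t b_\Delta\,\d s+\int_{t_k}^t\sigma_\Delta\,\d W(s)$, and then $Y(t)-\bar Y(t)=\big(Z(t)-Z(t_k)\big)+\big(D(\bar Y(t-\tau))-D(\bar Y(t_k-\tau))\big)$. Since $\bar Y$ is piecewise constant on mesh intervals and $\tau=m\Delta$ is an integer multiple of $\Delta$, the shift by $\tau$ maps mesh points to mesh points, so $\bar Y(t-\tau)=\bar Y(t_k-\tau)=y_{t_{k-m}}$ for $t\in[t_k,t_{k+1})$; hence the neutral increment vanishes and $Y(t)-\bar Y(t)=Z(t)-Z(t_k)$ exactly on each mesh interval. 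This is the key simplification that makes the neutral term harmless here.

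Next I would take $\hat p$-th moments. For $\hat p\ge 2$ I would apply the elementary inequality $|a+b|^{\hat p}\le 2^{\hat p-1}(|a|^{\hat p}+|b|^{\hat p})$ to split into a drift contribution and a diffusion contribution. For the drift term, $\big|\int_{t_k}^t b_\Delta\,\d s\big|^{\hat p}\le (t-t_k)^{\hat p-1}\int_{t_k}^t|b_\Delta|^{\hat p}\,\d s\le \Delta^{\hat p}g(\Delta)^{\hat p}$ by Hölder and the uniform bound \eqref{bounddelta}. For the diffusion term, the Burkholder--Davis--Gundy inequality gives $\E\big|\int_{t_k}^t\sigma_\Delta\,\d W(s)\big|^{\hat p}\le C_{\hat p}\,\E\big(\int_{t_k}^t\|\sigma_\Delta\|^2\,\d s\big)^{\hat p/2}\le C_{\hat p}\Delta^{\hat p/2}g(\Delta)^{\hat p}$, again using \eqref{bounddelta}. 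Combining, and noting $\Delta\le 1$ so $\Delta^{\hat p}\le\Delta^{\hat p/2}$, yields $\E|Y(t)-\bar Y(t)|^{\hat p}\le C\Delta^{\hat p/2}g(\Delta)^{\hat p}$ uniformly in $t\in[0,T]$, which is \eqref{mediu}. For $0<\hat p<2$, the bound follows from the case $\hat p=2$ by Jensen's (or Lyapunov's) inequality: $\E|Y(t)-\bar Y(t)|^{\hat p}\le\big(\E|Y(t)-\bar Y(t)|^2\big)^{\hat p/2}\le\big(C\Delta g(\Delta)^2\big)^{\hat p/2}=C'\Delta^{\hat p/2}g(\Delta)^{\hat p}$.

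Finally, the limit statement follows from \eqref{mediu} together with the constraint \eqref{gdelta}, which gives $\Delta^{1/4}g(\Delta)\le 1$, hence $\Delta^{\hat p/2}g(\Delta)^{\hat p}=\big(\Delta^{1/4}g(\Delta)\big)^{\hat p}\Delta^{\hat p/4}\le\Delta^{\hat p/4}\to 0$ as $\Delta\to 0$. I do not anticipate a serious obstacle: the main point to get right is the bookkeeping around the neutral term — verifying that $\bar Y(\cdot-\tau)$ is constant on each mesh interval so the $D$-increment drops out — after which the estimate is a routine Hölder/BDG computation driven entirely by the uniform bound \eqref{bounddelta} on the truncated coefficients. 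One should also note that the constant $C$ depends on $\hat p$ and $T$ but not on $\Delta$, which is immediate from the above since all bounds are pointwise in $t$.
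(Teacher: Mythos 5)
Your proposal is correct and follows essentially the same route as the paper: identify the mesh interval containing $t$, observe that the neutral increment $D(\bar{Y}(t-\tau))-D(\bar{Y}(t_k-\tau))$ vanishes because $\bar{Y}(\cdot-\tau)$ is constant on $[t_k,t_{k+1})$, then bound the drift and diffusion integrals via H\"{o}lder and BDG using \eqref{bounddelta}, treat $\hat{p}<2$ by Jensen, and deduce the limit from $\Delta^{1/4}g(\Delta)\le 1$. Your explicit verification that the $D$-increment drops out is a point the paper leaves implicit, but the argument is the same.
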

\begin{proof}
For any $t\in[0,T]$, there exists a $k$ such that $t\in[t_k,t_{k+1})$, thus by \eqref{bounddelta}, together with the H\"{o}lder inequality and the Burkholder-Davis-Gundy (BDG) inequality, for ${\hat{p}}\ge 2$, we derive from \eqref{continuous} that
\begin{equation*}
\begin{split}
\mathbb{E}|Y(t)-\bar{Y}(t)|^{\hat{p}}\le& C\mathbb{E}\left|\int_{t_k}^tb_\Delta(\bar{Y}(s),\bar{Y}(s-\tau)){\mbox d}s\right|^{\hat{p}}+\mathbb{E}\left|\int_{t_k}^t\sigma_\Delta(\bar{Y}(s),\bar{Y}(s-\tau)){\mbox d}W(s)\right|^{\hat{p}}\\
\le&C\Delta^{{\hat{p}}-1}\mathbb{E}\int_{t_k}^t|b_\Delta(\bar{Y}(s),\bar{Y}(s-\tau))|^{\hat{p}}{\mbox d}s+C\Delta^{\frac{{\hat{p}}-2}{2}}\mathbb{E}\int_{t_k}^t|\sigma_\Delta(\bar{Y}(s),\bar{Y}(s-\tau))|^{\hat{p}}{\mbox d}s\\
\le&C\Delta^{{\hat{p}}/2}(g(\Delta))^{\hat{p}}.
\end{split}
\end{equation*}
For ${\hat{p}}\in(0,2)$, by the H\"{o}lder inequality again, \eqref{mediu} follows. Since $\Delta^{{\hat{p}}/2}(g(\Delta))^{\hat{p}}\le\Delta^{{\hat{p}}/4}$, the second part is shown by taking limits on both sides of \eqref{mediu}.

\end{proof}

\begin{lem}\label{numbound}
{\rm Let (A1) and (A4) hold, then there exists a positive constant $C$ independent of $\Delta$ such that
\begin{equation}\label{ytp}
\sup\limits_{t\in[0,T]}\mathbb{E}|Y(t)|^p\le C,
\end{equation}
and
\begin{equation*}
\mathbb{P}(\rho_R\le T)\le \frac{C}{R^p},
\end{equation*}
where $R$ is a positive real number and $\rho_R$ is a stopping time defined by $\rho_R=\inf\{t\ge 0: |Y(t)|\ge R\}$.
}
\end{lem}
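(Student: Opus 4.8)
The plan is to carry out, at the level of the continuous interpolation, the discrete analogue of the argument proving Lemma~\ref{exactbound}; the one genuinely new feature is that the Khasminskii-type condition (A4) is imposed on the step process $\bar Y$, whereas It\^o's formula naturally produces the continuous process $Y$, so these two objects must be reconciled. First I would set $Z(t):=Y(t)-D(\bar Y(t-\tau))$. By \eqref{continuous}, $Z$ is an It\^o process whose drift $b_\Delta(\bar Y(t),\bar Y(t-\tau))$ and diffusion $\sigma_\Delta(\bar Y(t),\bar Y(t-\tau))$ are bounded by $g(\Delta)$ thanks to \eqref{bounddelta}, and $Z(0)=\xi(0)-D(\xi(-\tau))$, so $\mathbb{E}|Z(0)|^p\le(1+\kappa)^p\mathbb{E}\|\xi\|_\infty^p<\infty$ by (A1). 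Since $p>2$ the map $x\mapsto|x|^p$ is $C^2$, so I would apply It\^o's formula to $|Z(t)|^p$, localise by $\theta_N=\inf\{t\ge0:|Z(t)|\ge N\}$ to remove the local martingale, take expectations, let $N\to\infty$, and use $|\sigma^{T}z|^2\le|z|^2\|\sigma\|^2$ to combine the two second-order terms; this gives
\[
\mathbb{E}|Z(t)|^p\le\mathbb{E}|Z(0)|^p+p\int_0^t\mathbb{E}\Big[|Z(s)|^{p-2}\Big(\langle Z(s),b_\Delta(\bar Y(s),\bar Y(s-\tau))\rangle+\tfrac{p-1}{2}\|\sigma_\Delta(\bar Y(s),\bar Y(s-\tau))\|^2\Big)\Big]{\rm d}s.
\]

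Next I would split $\langle Z(s),b_\Delta\rangle=\langle \bar Y(s)-D(\bar Y(s-\tau)),b_\Delta\rangle+\langle Y(s)-\bar Y(s),b_\Delta\rangle$. The first inner product together with $\tfrac{p-1}{2}\|\sigma_\Delta\|^2$ is at most $\bar L_1(1+|\bar Y(s)|^2+|\bar Y(s-\tau)|^2)$ by (A4), while the second is at most $|Y(s)-\bar Y(s)|\,g(\Delta)$ by \eqref{bounddelta}. Multiplying by $|Z(s)|^{p-2}$ and applying Young's inequality with the conjugate exponents $p/(p-2)$ and $p/2$ bounds the integrand by $C\big(1+|Z(s)|^p+|\bar Y(s)|^p+|\bar Y(s-\tau)|^p+|Y(s)-\bar Y(s)|^{p/2}(g(\Delta))^{p/2}\big)$ with $C=C(p,\bar L_1)$. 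Taking expectations and using Lemma~\ref{exa-num} with $\hat p=p/2$ gives $\mathbb{E}|Y(s)-\bar Y(s)|^{p/2}(g(\Delta))^{p/2}\le C\Delta^{p/4}(g(\Delta))^{p}\le C$, because $\Delta^{1/4}g(\Delta)\le1$ by \eqref{gdelta}; hence
\[
\mathbb{E}|Z(t)|^p\le\mathbb{E}|Z(0)|^p+C\int_0^t\big(1+\mathbb{E}|Z(s)|^p+\mathbb{E}|\bar Y(s)|^p+\mathbb{E}|\bar Y(s-\tau)|^p\big){\rm d}s,
\]
with $C$ independent of $\Delta$.

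To close the estimate I would exploit that $Y$ and $\bar Y$ coincide at the grid points, so that $\mathbb{E}|\bar Y(s)|^p\vee\mathbb{E}|\bar Y(s-\tau)|^p\le\Phi(t):=\mathbb{E}\|\xi\|_\infty^p\vee\sup_{0\le u\le t}\mathbb{E}|Y(u)|^p$ for all $s\le t$. Since $D(0)=0$ and $\kappa\in(0,1)$, convexity of $t\mapsto t^p$ yields $|Y(s)|^p\le\lambda^{1-p}|Z(s)|^p+(1-\lambda)^{1-p}\kappa^p|\bar Y(s-\tau)|^p$ for any $\lambda\in(0,1)$; choosing a fixed $\lambda\in(0,1-\kappa^{p/(p-1)})$ makes $q:=(1-\lambda)^{1-p}\kappa^p<1$, and after taking expectations and suprema this gives $(1-q)\Phi(t)\le\mathbb{E}\|\xi\|_\infty^p+\lambda^{1-p}\sup_{0\le u\le t}\mathbb{E}|Z(u)|^p$. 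Substituting this back (the right-hand side of the last display being nondecreasing in $t$) produces a Gronwall inequality for $\sup_{0\le u\le t}\mathbb{E}|Z(u)|^p$ with $\Delta$-independent constants, which yields first a uniform bound on $\sup_{0\le t\le T}\mathbb{E}|Z(t)|^p$ and then, through $\Phi$, the assertion \eqref{ytp}. Finally $\mathbb{P}(\rho_R\le T)\le C/R^p$ follows exactly as in Lemma~\ref{exactbound}, from $\mathbb{P}(\rho_R\le T)=\mathbb{E}\big({\bf I}_{\{\rho_R\le T\}}|Y(\rho_R)|^p/R^p\big)\le R^{-p}\sup_{0\le t\le T}\mathbb{E}|Y(t)|^p$.

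I expect the main obstacle to be precisely the reconciliation of $Y$ and $\bar Y$ inside the drift: the discrepancy $\langle Y(s)-\bar Y(s),b_\Delta\rangle$ must be absorbed into a quantity bounded uniformly in $\Delta$, and it is exactly the calibration $\Delta^{1/4}g(\Delta)\le1$ in \eqref{gdelta}, combined with Lemma~\ref{exa-num}, that makes this possible. The secondary delicate point is closing the Gronwall loop in the presence of the delayed term $\bar Y(\cdot-\tau)$, where the contraction constant $\kappa<1$ from (A1) is indispensable to keep the coefficient $q$ strictly below $1$.
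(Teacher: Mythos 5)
Your proposal is correct and follows essentially the same route as the paper's proof: It\^o's formula applied to $|Y(t)-D(\bar{Y}(t-\tau))|^p$, the splitting of $\langle Z(s),b_\Delta\rangle$ into the part controlled by (A4) and the discrepancy $\langle Y(s)-\bar{Y}(s),b_\Delta\rangle$ handled via Lemma \ref{exa-num}, \eqref{bounddelta} and the calibration $\Delta^{1/4}g(\Delta)\le 1$, followed by Gronwall. The only substantive difference is that you make explicit the absorption of the delayed neutral term through the convexity inequality with $(1-\lambda)^{1-p}\kappa^p<1$, a step the paper passes over with a bare ``Consequently''.
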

\begin{proof}
Application of the It\^{o} formula yields
\begin{equation}\label{ztp}
\begin{split}
&\mathbb{E}|Y(t)-D(\bar{Y}(t-\tau))|^p\le|\xi(0)-D(\xi(-\tau))|^p+p\mathbb{E}\int_0^t|Y(s)-D(\bar{Y}(s-\tau))|^{p-2}\\
&~~~\cdot\bigg[\langle Y(s)-D(\bar{Y}(s-\tau)),b_\Delta(\bar{Y}(s), \bar{Y}(s-\tau))\rangle+\frac{p-1}{2}\|\sigma_\Delta(\bar{Y}(s), \bar{Y}(s-\tau))\|^2\bigg]{\mbox d}s\\
\le&|\xi(0)-D(\xi(-\tau))|^p+p\mathbb{E}\int_0^t|Y(s)-D(\bar{Y}(s-\tau))|^{p-2}\langle Y(s)-\bar{Y}(s),b_\Delta(\bar{Y}(s), \bar{Y}(s-\tau))\rangle\\
&+p\mathbb{E}\int_0^t|Y(s)-D(\bar{Y}(s-\tau))|^{p-2}\bigg[\langle \bar{Y}(s)-D(\bar{Y}(s-\tau)),b_\Delta(\bar{Y}(s), \bar{Y}(s-\tau))\rangle\\
&+\frac{p-1}{2}\|\sigma_\Delta(\bar{Y}(s), \bar{Y}(s-\tau))\|^2\bigg]{\mbox d}s\\
=:&|\xi(0)-D(\xi(-\tau))|^p+J_1(t)+J_2(t).
\end{split}
\end{equation}
With (A1) and Lemma \ref{exa-num}, we can easily derive from \eqref{bounddelta} that
\begin{equation*}
\begin{split}
J_1(t)\le&C\mathbb{E}\int_0^t|Y(s)-D(\bar{Y}(s-\tau))|^{p}{\mbox d}s+C\mathbb{E}\int_0^t|Y(s)-\bar{Y}(s)|^{p/2}|b_\Delta(\bar{Y}(s), \bar{Y}(s-\tau))|^{p/2}{\mbox d}s\\
\le&C\int_0^t\sup\limits_{0\le u\le s}\mathbb{E}|Y(u)|^{p}{\mbox d}s+C\Delta^{p/4}(g(\Delta))^p+C.
\end{split}
\end{equation*}
By (A1) and (A4), we have
\begin{equation*}
\begin{split}
J_2(t)\le&p\bar{L}_1\mathbb{E}\int_0^t|Y(s)-D(\bar{Y}(s-\tau))|^{p-2}(1+|\bar{Y}(s)|^2+|\bar{Y}(s-\tau)|^2)\\
\le&C\mathbb{E}\int_0^t(|Y(s)|^{p-2}+|\bar{Y}(s-\tau)|^{p-2})(1+|\bar{Y}(s)|^2+|\bar{Y}(s-\tau)|^2){\mbox d}s\\
\le&C+C\int_0^t\sup\limits_{0\le u\le s}\mathbb{E}|Y(u)|^{p}{\mbox d}s.
\end{split}
\end{equation*}
Substituting $J_1(t)$ and $J_2(t)$ into \eqref{ztp}, we have
\begin{equation*}
\begin{split}
\sup\limits_{0\le u\le t}\mathbb{E}|Y(u)-D(\bar{Y}(u-\tau))|^p\le C+C\int_0^t\sup\limits_{0\le u\le s}\mathbb{E}|Y(u)|^{p}{\mbox d}s,
\end{split}
\end{equation*}
where we have used \eqref{gdelta}. Consequently,
\begin{equation*}
\begin{split}
\sup\limits_{0\le u\le t}\mathbb{E}|Y(u)|^{p}\le C+C\sup\limits_{0\le u\le t}\mathbb{E}|Y(u)-D(\bar{Y}(u-\tau))|^p\le C+C\int_0^t\sup\limits_{0\le u\le s}\mathbb{E}|Y(u)|^{p}{\mbox d}s.
\end{split}
\end{equation*}
The Gronwall inequality then leads to \eqref{ytp}. Moreover, by \eqref{ytp}, we get
\begin{equation*}
\mathbb{P}(\rho_R\le T)=\mathbb{E}\left({\bf I}_{\{\rho_R\le T\}}\frac{|Y(\rho_R)|^p}{R^p}\right)\le\frac{1}{R^p}\sup\limits_{t\in[0,T]}\mathbb{E}|Y(t)|^p\le\frac{C}{R^p}.
\end{equation*}
This completes the proof.
\end{proof}

\subsection{\normalsize Strong convergence rate at time $T$}
In order to obtain the strong convergence rate, we need more assumptions.
\begin{enumerate}
\item[{\bf (A5)}] There exist positive constants $q\ge 2$ and $L_2$ such that
\begin{equation*}
\langle x-D(y)-\bar{x}+D(\bar{y}), b(x,y)-b(\bar{x},\bar{y})\rangle+\frac{q-1}{2}\|\sigma(x,y)-\sigma(\bar{x},\bar{y})\|^2\le L_2(|x-\bar{x}|^2+|y-\bar{y}|^2)
\end{equation*}
for $x,y,\bar{x},\bar{y}\in\mathbb{R}^n$.
\item[{\bf (A6)}] There exist positive constants $l\ge1$ and $L_3$ such that
\begin{equation*}
|b(x,y)|\le L_3(1+|x|^l+|y|^l)
\end{equation*}
for $x,y\in\mathbb{R}^n$.
\end{enumerate}

\begin{lem}\label{x-ynur}
{\rm Let (A1)-(A6) hold, $R$ is a real number and let $\Delta$ be sufficiently small such that $\|\xi\|_{\infty}<R\le f^{-1}(g(\Delta))$, then there exists a positive constant $C$ independent of $\Delta$ such that for $q<p$ and $ql<2p$,
\begin{equation*}
\sup\limits_{0\le t\le T}\mathbb{E}|X(t\wedge\nu_R)-Y(t\wedge\nu_R)|^q\le C\Delta^{q/4}(g(\Delta))^{q/2},
\end{equation*}
where $\nu_R:=\tau_R\wedge\rho_R$, and $\tau_R$, $\rho_R$ is defined as in Lemma \ref{exactbound} and \ref{numbound}.
}
\end{lem}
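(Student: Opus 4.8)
The plan is to compare the exact solution $X$ with the continuous truncated EM process $Y$ up to the stopping time $\nu_R=\tau_R\wedge\rho_R$. Set $e(t):=X(t)-Y(t)$ and work with the neutral difference $Z(t):=\big(X(t)-D(X(t-\tau))\big)-\big(Y(t)-D(\bar Y(t-\tau))\big)$, so that $e(t)=Z(t)+D(X(t-\tau))-D(\bar Y(t-\tau))$. First I would apply the It\^o formula to $|Z(t\wedge\nu_R)|^q$, using \eqref{brownian} and \eqref{continuous} to write $\d Z(t)$ as $[b(X(t),X(t-\tau))-b_\Delta(\bar Y(t),\bar Y(t-\tau))]\,\d t+[\sigma(X(t),X(t-\tau))-\sigma_\Delta(\bar Y(t),\bar Y(t-\tau))]\,\d W(t)$. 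The martingale term vanishes in expectation, leaving a drift term of the form $q\,\E\int_0^{t\wedge\nu_R}|Z(s)|^{q-2}\langle Z(s),\,\cdot\rangle\,\d s$ plus $\tfrac{q(q-1)}{2}\,\E\int_0^{t\wedge\nu_R}|Z(s)|^{q-2}\|\cdot\|^2\,\d s$.

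The key algebraic step is to split each truncated coefficient difference through the intermediate point $(\bar Y(s),\bar Y(s-\tau))$: for $s<\nu_R$ we have $|\bar Y(s)|\vee|X(s)|\le R\le f^{-1}(g(\Delta))$, hence $b_\Delta=b$ and $\sigma_\Delta=\sigma$ at those arguments, and so $b(X(s),X(s-\tau))-b_\Delta(\bar Y(s),\bar Y(s-\tau))=[b(X(s),X(s-\tau))-b(\bar Y(s),\bar Y(s-\tau))]$ — except that $X(s-\tau)$ and $\bar Y(s-\tau)$ need not be bounded by $R$, and the neutral increment is measured against $\bar Y(s)$ rather than $Y(s)$. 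I would therefore further decompose, writing $b(X(s),X(s-\tau))-b(\bar Y(s),\bar Y(s-\tau)) = [b(X(s),X(s-\tau))-b(Y(s),\bar Y(s-\tau))] + [b(Y(s),\bar Y(s-\tau))-b(\bar Y(s),\bar Y(s-\tau))]$ (similarly for $\sigma$). The first bracket, paired with $Z(s)$ and combined with the $\sigma$-term, is exactly the shape controlled by the one-sided condition (A5), yielding a bound $\le L_2(|e(s)|^2+|e(s-\tau)|^2)$-type contribution after absorbing the $D$-difference via (A1) and a Young/elementary inequality of the form $|e(t)|^q\le (1+\kappa)^{q-1}(\kappa^{q-1}|e(t-\tau)|^q+\cdots)$ to relate $|e|$ and $|Z|$. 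The remaining cross-terms $b(Y(s),\cdot)-b(\bar Y(s),\cdot)$ and $\sigma(Y(s),\cdot)-\sigma(\bar Y(s),\cdot)$ are handled by the local Lipschitz bound (A2) at level $R$: they are $\le L_R|Y(s)-\bar Y(s)|$, and by Lemma \ref{exa-num} these produce a term of order $\Delta^{q/4}(g(\Delta))^{q/2}$ after a H\"older step (here one uses $q<p$ and the moment bounds from Lemmas \ref{exactbound} and \ref{numbound} to control the factors $|Z(s)|^{q-1}$, and $ql<2p$ together with (A6) to control the places where $|X(s-\tau)|^l$ or $|\bar Y(s-\tau)|^l$ appear unboundedly). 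Collecting everything gives
\begin{equation*}
\sup_{0\le u\le t}\E|e(u\wedge\nu_R)|^q \le C\Delta^{q/4}(g(\Delta))^{q/2} + C\int_0^t \sup_{0\le u\le s}\E|e(u\wedge\nu_R)|^q\,\d s,
\end{equation*}
where the delayed terms $\E|e(s-\tau)\wedge\nu_R|^q$ are absorbed into the running supremum (using that $e\equiv 0$ on $[-\tau,0]$), and Gronwall's inequality finishes the argument.

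The main obstacle I anticipate is bookkeeping the unbounded delay arguments: because $\nu_R$ only controls $|X(s)|$ and $|\bar Y(s)|$ at the ``current'' time, the terms involving $X(s-\tau)$ and $\bar Y(s-\tau)$ cannot simply be bounded by $R$, so one must lean on the polynomial growth (A6) plus the moment estimates \eqref{exactbound1} and \eqref{ytp} and carefully chosen H\"older exponents — this is precisely where the hypotheses $q<p$ and $ql<2p$ enter, and getting the exponent of $\Delta$ and $g(\Delta)$ to come out as $\Delta^{q/4}(g(\Delta))^{q/2}$ rather than something weaker requires using the sharp rate from Lemma \ref{exa-num} (namely $\Delta^{1/2}g(\Delta)$) in the $L^2$-type slots rather than the cruder $\Delta^{1/4}$ bound. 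A secondary technical point is the interplay between $e(t)$ and $Z(t)$ through the neutral term; the contraction constant $\kappa\in(0,1)$ from (A1) is what makes the elementary inequality relating their $q$-th moments usable without the delayed term blowing up, and one should state that inequality explicitly before invoking Gronwall.
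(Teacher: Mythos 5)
Your overall skeleton matches the paper's: apply the It\^o formula to the $q$-th power of the neutral difference $e(t)=X(t)-D(X(t-\tau))-Y(t)+D(\bar Y(t-\tau))$ up to $\nu_R$, use that $b_\Delta=b$ and $\sigma_\Delta=\sigma$ at $(\bar Y(s),\bar Y(s-\tau))$ on $[0,\nu_R]$, invoke (A5) and Lemma \ref{exa-num}, and close with Gronwall. However, your choice of decomposition creates two genuine problems. First, you estimate the leftover pieces $b(Y(s),\bar Y(s-\tau))-b(\bar Y(s),\bar Y(s-\tau))$ and the analogous $\sigma$-difference by the local Lipschitz constant $L_R$ from (A2). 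Since $R$ is tied to $\Delta$ through $R\le f^{-1}(g(\Delta))$ (and in Theorem \ref{theorem1} one takes $R=[\Delta^{q/4}(g(\Delta))^{q/2}]^{-1/(p-q)}\to\infty$ as $\Delta\to 0$), a constant involving $L_R$ is \emph{not} independent of $\Delta$, and the lemma's conclusion, as well as its use in the theorem, collapses. The paper never touches (A2) in this estimate: it decomposes the \emph{test vector}, $e(s)=[X(s)-D(X(s-\tau))-\bar Y(s)+D(\bar Y(s-\tau))]+[\bar Y(s)-Y(s)]$, keeps the coefficient differences $b(X,X_\tau)-b(\bar Y,\bar Y_\tau)$ and $\sigma(X,X_\tau)-\sigma(\bar Y,\bar Y_\tau)$ intact so that (A5) applies with exact constants, and handles the leftover $\langle \bar Y(s)-Y(s),\,b(X,X_\tau)-b(\bar Y,\bar Y_\tau)\rangle$ by Young, the polynomial bound (A6), and H\"older with exponents $\tfrac{2p}{2p-ql}$ and $\tfrac{2p}{ql}$. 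That is precisely where $q<p$, $ql<2p$ and the rate $\Delta^{q/4}(g(\Delta))^{q/2}$ come from (the factor $|Y-\bar Y|$ enters only at power $q/2$, which is why the rate is the square root of $\Delta^{q/2}(g(\Delta))^q$), not from controlling unbounded delayed arguments --- in fact $|X(s-\tau)|$ and $|\bar Y(s-\tau)|$ \emph{are} bounded by $R$ on $[0,\nu_R]$; the point is that using that bound would again smuggle $R$ into the constant.

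Second, your splitting of the diffusion difference through the intermediate point $(Y(s),\bar Y(s-\tau))$ does not close at the level of constants. You must write $\|\sigma(X,X_\tau)-\sigma(\bar Y,\bar Y_\tau)\|^2\le(1+\varepsilon)\|\sigma(X,X_\tau)-\sigma(Y,\bar Y_\tau)\|^2+(1+\varepsilon^{-1})\|\sigma(Y,\bar Y_\tau)-\sigma(\bar Y,\bar Y_\tau)\|^2$, and the main piece then carries the coefficient $(1+\varepsilon)\tfrac{q-1}{2}$, which strictly exceeds the $\tfrac{q-1}{2}$ that (A5) provides; because the It\^o exponent here equals the exponent $q$ appearing in (A5), there is no slack to absorb the $(1+\varepsilon)$. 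This is exactly why the paper's later lemma, which \emph{does} decompose the coefficients through an intermediate point, is stated only for $r\in[2,q)$ and under the stronger assumption (A7). If you keep your route, you would need either $r<q$ or a strengthened (A5); for the lemma as stated you should switch to the paper's decomposition of the test vector. The remaining ingredients of your write-up (vanishing of the martingale term in expectation, the $\kappa<1$ bookkeeping relating $|X-Y|$ to $|e|$ across delay intervals, and the final Gronwall step) are fine.
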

\begin{proof}
Denote $e(t)=X(t)-D(X(t-\tau))-Y(t)+D(\bar{Y}(t-\tau))$. By the It\^{o} formula, for any $t\in[0,T]$,
\begin{equation*}
\begin{split}
&\mathbb{E}|e(t\wedge\nu_R)|^q\le q\mathbb{E}\int_0^{t\wedge\nu_R}|e(s)|^{q-2}\bigg[\langle e(s),b(X(s),X(s-\tau))-b_\Delta(\bar{Y}(s),\bar{Y}(s-\tau))\rangle\\
&+\frac{q-1}{2}\|\sigma(X(s),X(s-\tau))-\sigma_\Delta(\bar{Y}(s),\bar{Y}(s-\tau))\|^2\bigg]{\mbox d}s\\
\le&q\mathbb{E}\int_0^{t\wedge\nu_R}|e(s)|^{q-2}\bigg[\langle X(s)-D(X(s-\tau))-\bar{Y}(s)+D(\bar{Y}(s-\tau)),b(X(s),X(s-\tau))\\
&-b_\Delta(\bar{Y}(s),\bar{Y}(s-\tau))\rangle+\frac{q-1}{2}\|\sigma(X(s),X(s-\tau))-\sigma_\Delta(\bar{Y}(s),\bar{Y}(s-\tau))\|^2\bigg]{\mbox d}s\\
&+q\mathbb{E}\int_0^{t\wedge\nu_R}|e(s)|^{q-2}\langle \bar{Y}(s)-Y(s),b(X(s),X(s-\tau))-b_\Delta(\bar{Y}(s),\bar{Y}(s-\tau))\rangle{\mbox d}s\\
=:&H_1(t)+H_2(t).
\end{split}
\end{equation*}
Since for $0\le s\le t\wedge\nu_R$, one has $|\bar{Y}(s)|\vee|\bar{Y}(s-\tau)|\le R\le f^{-1}(g(\Delta))$. With the definition of $b_\Delta$ and $\sigma_\Delta$, we then have
\begin{equation*}
\begin{split}
b_\Delta(\bar{Y}(s),\bar{Y}(s-\tau))=b(\bar{Y}(s),\bar{Y}(s-\tau)),
\end{split}
\end{equation*}
and
\begin{equation*}
\begin{split}
\sigma_\Delta(\bar{Y}(s),\bar{Y}(s-\tau))=\sigma(\bar{Y}(s),\bar{Y}(s-\tau))
\end{split}
\end{equation*}
for $0\le s\le t\wedge\nu_R$. Thus, we derive from (A5) and Lemma \ref{exa-num} that
\begin{equation*}
\begin{split}
H_1(t)\le& qL_2\mathbb{E}\int_0^{t\wedge\nu_R}|e(s)|^{q-2}(|X(s)-\bar{Y}(s)|^2+|X(s-\tau)-\bar{Y}(s-\tau)|^2){\mbox d}s\\
\le&C\mathbb{E}\int_0^{t\wedge\nu_R}|e(s)|^{q}{\mbox d}s+C\mathbb{E}\int_0^{t\wedge\nu_R}(|X(s)-\bar{Y}(s)|^q+|X(s-\tau)-\bar{Y}(s-\tau)|^q){\mbox d}s \\
\le&C\int_0^{t}\mathbb{E}|X(s\wedge\nu_R)-Y(s\wedge\nu_R)|^q{\mbox d}s+C\Delta^{q/2}(g(\Delta))^q.
\end{split}
\end{equation*}
Moreover, by (A1), (A6) and Lemmas \ref{exactbound}-\ref{numbound}, we derive
\begin{equation*}
\begin{split}
H_2(t)\le&C\mathbb{E}\int_0^{t\wedge\nu_R}|e(s)|^{q}{\mbox d}s+C\mathbb{E}\int_0^{t\wedge\nu_R}|\bar{Y}(s)-Y(s)|^{\frac{q}{2}}\\
&\cdot|b(X(s),X(s-\tau))-b(\bar{Y}(s),\bar{Y}(s-\tau))|^{\frac{q}{2}}{\mbox d}s\\
\le&C\mathbb{E}\int_0^{t\wedge\nu_R}|e(s)|^q{\mbox d}s+C\mathbb{E}\int_0^{t\wedge\nu_R}|\bar{Y}(s)-Y(s)|^{\frac{q}{2}}\\
&\cdot(1+|X(s)|^l+|X(s-\tau)|^l+|\bar{Y}(s)|^l+|\bar{Y}(s-\tau)|^l)^{\frac{q}{2}}{\mbox d}s\\
\le&C\mathbb{E}\int_0^{t}|e(s)|^{q}{\mbox d}s+C\int_0^{t\wedge\nu_R}[\mathbb{E}|Y(s\wedge\nu_R)-\bar{Y}(s\wedge\nu_R)|^{\frac{pq}{2p-ql}}]^{\frac{2p-ql}{2p}}\\
&[\mathbb{E}(1+|X(s\wedge\nu_R)|^p+|X(s\wedge\nu_R-\tau)|^p+|\bar{Y}(s\wedge\nu_R)|^p+|\bar{Y}(s\wedge\nu_R-\tau)|^p)]^{\frac{ql}{2p}}{\mbox d}s\\
\le&C\mathbb{E}\int_0^{t}|X(s\wedge\nu_R)-Y(s\wedge\nu_R)|^q{\mbox d}s+C\Delta^{q/4}(g(\Delta))^{q/2}.
\end{split}
\end{equation*}
Thus, the estimation of $H_1(t)$-$H_2(t)$ leads to
\begin{equation*}
\begin{split}
\mathbb{E}|e(t\wedge\nu_R)|^q\le C\int_0^{t}\mathbb{E}|X(s\wedge\nu_R)-Y(s\wedge\nu_R)|^q{\mbox d}s+C\Delta^{q/4}(g(\Delta))^{q/2}.
\end{split}
\end{equation*}
Consequently,
\begin{equation*}
\begin{split}
&\mathbb{E}|X(t\wedge\nu_R)-Y(t\wedge\nu_R)|^q\le C\mathbb{E}|e(t\wedge\nu_R)|^q\\
\le &C\Delta^{q/4}(g(\Delta))^{q/2}+C\int_0^t\mathbb{E}|X(s\wedge\nu_R)-Y(s\wedge\nu_R)|^q{\mbox d}s.
\end{split}
\end{equation*}
Application of the Gronwall inequality yields the desired result.
\end{proof}

\begin{thm}\label{theorem1}
{\rm Let (A1)-(A6) hold, $\Delta$ is sufficiently small such that
\begin{equation*}
f\left([\Delta^{q/4}(g(\Delta))^{q/2}]^{\frac{-1}{p-q}}\right)\le g(\Delta),
\end{equation*}
then there exists a positive constant $C$ independent of $\Delta$ such that for $q<p$ and $ql<2p$,
\begin{equation*}
\mathbb{E}|X(T)-Y(T)|^q\le C\Delta^{q/4}(g(\Delta))^{q/2}
\end{equation*}
for all $T>0$.
}
\end{thm}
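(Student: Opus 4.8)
The plan is to combine the ``inside the ball'' estimate from Lemma \ref{x-ynur} with a standard Young-inequality truncation argument to control the contribution of the event on which the exact or the numerical solution has left the ball of radius $R$. With $\nu_R=\tau_R\wedge\rho_R$ as in Lemma \ref{x-ynur}, I would split
\[
\mathbb{E}|X(T)-Y(T)|^q=\mathbb{E}\big(|X(T)-Y(T)|^q\mathbf{1}_{\{\nu_R>T\}}\big)+\mathbb{E}\big(|X(T)-Y(T)|^q\mathbf{1}_{\{\nu_R\le T\}}\big).
\]
On $\{\nu_R>T\}$ one has $T\wedge\nu_R=T$, hence $X(T)=X(T\wedge\nu_R)$ and $Y(T)=Y(T\wedge\nu_R)$, so the first term is dominated by $\sup_{0\le t\le T}\mathbb{E}|X(t\wedge\nu_R)-Y(t\wedge\nu_R)|^q\le C\Delta^{q/4}(g(\Delta))^{q/2}$ directly from Lemma \ref{x-ynur}, provided $\Delta$ is small enough that $\|\xi\|_\infty<R\le f^{-1}(g(\Delta))$; choosing such an $R$ is the one genuinely delicate point, addressed below.

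For the second term I would apply Young's inequality with conjugate exponents $p/q$ and $p/(p-q)$, in the form $a^q\le \tfrac{q\delta}{p}a^p+\tfrac{p-q}{p}\delta^{-q/(p-q)}$ for $a\ge0$ and any $\delta>0$, multiply by $\mathbf{1}_{\{\nu_R\le T\}}$ and take expectations, obtaining
\[
\mathbb{E}\big(|X(T)-Y(T)|^q\mathbf{1}_{\{\nu_R\le T\}}\big)\le \frac{q\delta}{p}\,\mathbb{E}|X(T)-Y(T)|^p+\frac{p-q}{p}\,\delta^{-q/(p-q)}\,\mathbb{P}(\nu_R\le T).
\]
By Lemmas \ref{exactbound} and \ref{numbound}, $\mathbb{E}|X(T)-Y(T)|^p\le 2^{p-1}\big(\mathbb{E}|X(T)|^p+\mathbb{E}|Y(T)|^p\big)\le C$, and $\mathbb{P}(\nu_R\le T)\le\mathbb{P}(\tau_R\le T)+\mathbb{P}(\rho_R\le T)\le C/R^p$, so this term is at most $C\delta+C\delta^{-q/(p-q)}R^{-p}$.

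It then remains to calibrate $\delta$ and $R$. Taking $\delta=\Delta^{q/4}(g(\Delta))^{q/2}$ makes $C\delta$ exactly of the claimed order; to bring $C\delta^{-q/(p-q)}R^{-p}$ down to the same order one needs $R^p\ge\delta^{-1-q/(p-q)}=\delta^{-p/(p-q)}$, i.e. $R\ge\delta^{-1/(p-q)}=[\Delta^{q/4}(g(\Delta))^{q/2}]^{-1/(p-q)}$, so I would set $R$ equal to this value. The admissibility condition $R\le f^{-1}(g(\Delta))$ needed by Lemma \ref{x-ynur} is equivalent to $f(R)\le g(\Delta)$, which is precisely the standing hypothesis $f\big([\Delta^{q/4}(g(\Delta))^{q/2}]^{-1/(p-q)}\big)\le g(\Delta)$ of the theorem; moreover $\Delta^{1/4}g(\Delta)\le1$ forces $\Delta^{q/4}(g(\Delta))^{q/2}\le\Delta^{q/8}\to0$, so $R\to\infty$ as $\Delta\to0$ and hence $\|\xi\|_\infty<R$ for all sufficiently small $\Delta$. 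Combining the three bounds gives $\mathbb{E}|X(T)-Y(T)|^q\le C\Delta^{q/4}(g(\Delta))^{q/2}$, and since no step used a particular value of $T$, the estimate holds for every $T>0$.

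Regarding difficulty: each individual estimate is routine once Lemmas \ref{exactbound}, \ref{numbound} and \ref{x-ynur} are in hand, so the only real obstacle is the bookkeeping in the calibration step --- choosing $\delta$ and $R$ simultaneously so that the ``bad event'' contribution is no larger than the rate $\Delta^{q/4}(g(\Delta))^{q/2}$ coming from Lemma \ref{x-ynur}, while keeping $R$ below the truncation level $f^{-1}(g(\Delta))$. That is exactly what the hypothesis on $\Delta$ in the statement encodes; it is not an extra structural assumption but the quantitative price of balancing $\mathbb{P}(\nu_R\le T)\le C/R^p$ against the moment bound on $|X(T)-Y(T)|^p$.
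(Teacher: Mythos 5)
Your proposal is correct and follows essentially the same route as the paper: the same split over $\{\nu_R>T\}$ versus its complement, the same Young-inequality bound with the moment estimates of Lemmas \ref{exactbound} and \ref{numbound} and the tail bounds $\mathbb{P}(\tau_R\le T)+\mathbb{P}(\rho_R\le T)\le C/R^p$, and the same calibration $\eta=\Delta^{q/4}(g(\Delta))^{q/2}$, $R=\eta^{-1/(p-q)}$ feeding into Lemma \ref{x-ynur}. The only (welcome) addition is your explicit remark that $\Delta^{1/4}g(\Delta)\le1$ forces $R\to\infty$, so $\|\xi\|_\infty<R$ holds for small $\Delta$ --- a point the paper leaves implicit.
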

\begin{proof}
By the Young inequality, we derive that for any $\eta>0$,
\begin{equation}\label{eTq}
\begin{split}
&\mathbb{E}|X(T)-Y(T)|^q=\mathbb{E}(|X(T)-Y(T)|^q{\bf I}_{\{\tau_R>T,\rho_R>T\}})+\mathbb{E}(|X(T)-Y(T)|^q{\bf I}_{\{\tau_R\le T~{\rm or}~\rho_R\le T\}})\\
\le&\mathbb{E}(|X(T)-Y(T)|^q{\bf I}_{\{\nu_R>T\}})+\frac{q\eta}{p}\mathbb{E}|X(T)-Y(T)|^p+\frac{p-q}{p\eta^{\frac{q}{p-q}}}\mathbb{P}(\tau_R\le T~{\rm or}~\rho_R\le T).
\end{split}
\end{equation}
By Lemmas \ref{exactbound} and \ref{numbound}, we derive
\begin{equation}\label{eTqp}
\mathbb{E}|X(T)-Y(T)|^p\le C\mathbb{E}|X(T)|^p+C\mathbb{E}|Y(T)|^p\le C.
\end{equation}
On the other hand, Lemmas \ref{exactbound} and \ref{numbound} show that
\begin{equation}\label{ptaur}
\begin{split}
\mathbb{P}(\tau_R\le T~{\rm or}~\rho_R\le T)\le\mathbb{P}(\tau_R\le T)+\mathbb{P}(\rho_R\le T)\le\frac{2C}{R^p}.
\end{split}
\end{equation}
Consequently, it follows from \eqref{eTq}-\eqref{ptaur} that
\begin{equation*}
\begin{split}
\mathbb{E}|X(T)-Y(T)|^q\le&\mathbb{E}|X(T\wedge\nu_R)-Y(T\wedge\nu_R)|^q+\frac{q\eta C}{p}+\frac{2(p-q)C}{p\eta^{\frac{q}{p-q}}R^p}.
\end{split}
\end{equation*}
Choosing $\eta=\Delta^{q/4}(g(\Delta))^{q/2}$ and $R=[\Delta^{q/4}(g(\Delta))^{q/2}]^{\frac{-1}{p-q}}$, noticing that
\begin{equation*}
R=[\Delta^{q/4}(g(\Delta))^{q/2}]^{\frac{-1}{p-q}}\le f^{-1}(g(\Delta)),
\end{equation*}
applying Lemma \ref{x-ynur}, the desired result will be obtained.
\end{proof}

\begin{rem}\label{song}
{\rm For the second example in Remark \ref{example2}, we see (A1)-(A6) are satisfied with $p=3,q=2,l=3$, if we let $g(\Delta)=\Delta^{-\epsilon}$ for $\epsilon\in(0,\frac{1}{4}]$, we see the order of $L^2$-convergence is close to 1/2. That is,
\begin{equation*}
\begin{split}
\mathbb{E}|X(T)-Y(T)|^2=\mathcal{O}(\Delta^{1/2-\epsilon}).
\end{split}
\end{equation*}

}
\end{rem}

Remark \ref{song} shows that under assumptions (A1)-(A6), the order of $L^2$-convergence for the example is close to 1/2. In fact, if we replace (A2), (A6) with the following (A7), the strong convergence rate can be improved.
\begin{enumerate}
\item[{\bf (A7)}] There exist positive constants $l$ and $L_4$ such that
\begin{equation*}
|b(x,y)-b(\bar{x},\bar{y})|+\|\sigma(x,y)-\sigma(\bar{x},\bar{y})\|\le L_4(1+|x|^l+|\bar{x}|^l+|y|^l+|\bar{y}|^l)(|x-\bar{x}|+|y-\bar{y}|)
\end{equation*}
for $x,y,\bar{x},\bar{y}\in\mathbb{R}^n$.
\end{enumerate}

\begin{rem}
{\rm Assumption (A7) implies (A2) and (A6). Thus, Lemma \ref{exactbound} still holds with (A2) replaced by (A7).

}
\end{rem}

\begin{lem}
{\rm Let (A1), (A3)-(A5) and (A7) hold, $R$ is a real number and let $\Delta$ be sufficiently small such that $\|\xi\|_{\infty}<R\le f^{-1}(g(\Delta))$, then there exists a positive constant $C$ independent of $\Delta$ such that for $q<p$, $ql<2p$, and $r\in[2,q)$
\begin{equation*}
\sup\limits_{0\le t\le T}\mathbb{E}|X(t\wedge\nu_R)-Y(t\wedge\nu_R)|^{r}\le C\Delta^{r/2}(g(\Delta))^{r},
\end{equation*}
where $\nu_R:=\tau_R\wedge\rho_R$, and $\tau_R$, $\rho_R$ is the same as before.
}
\end{lem}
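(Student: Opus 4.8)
The plan is to run the proof of Lemma~\ref{x-ynur} again, but to exploit the sharper assumption (A7) (which implies (A2) and (A6), so all of Lemmas~\ref{exactbound}--\ref{x-ynur} remain available) in place of (A2)+(A6). Keep $e(t)=X(t)-D(X(t-\tau))-Y(t)+D(\bar Y(t-\tau))$ and apply the It\^{o} formula to $|e(t\wedge\nu_R)|^r$ for $r\in[2,q)$. Because $|\bar Y(s)|\vee|\bar Y(s-\tau)|\le R\le f^{-1}(g(\Delta))$ for $s\le\nu_R$, the truncated coefficients coincide with the genuine ones along the stopped path, so the drift term of $|e(t\wedge\nu_R)|^r$ is
\[
r|e(s)|^{r-2}\Big[\langle e(s),\,b(X(s),X(s-\tau))-b(\bar Y(s),\bar Y(s-\tau))\rangle+\tfrac{r-1}{2}\|\sigma(X(s),X(s-\tau))-\sigma(\bar Y(s),\bar Y(s-\tau))\|^2\Big].
\]
Writing $e(s)=[X(s)-D(X(s-\tau))-\bar Y(s)+D(\bar Y(s-\tau))]+[\bar Y(s)-Y(s)]$ and using $\tfrac{r-1}{2}\le\tfrac{q-1}{2}$, the part of this integrand attached to the first bracket is controlled by (A5), Young's inequality, the crude bound $|X-\bar Y|\le|X-Y|+|Y-\bar Y|$, assumption (A1) (to pass between $e$ and $X-Y$) and Lemma~\ref{exa-num}; it contributes the Gronwall-friendly quantity $C\int_0^t\mathbb{E}|e(s\wedge\nu_R)|^r\,\mathrm{d}s+C\int_0^t\mathbb{E}|X(s\wedge\nu_R)-Y(s\wedge\nu_R)|^r\,\mathrm{d}s+C\Delta^{r/2}(g(\Delta))^r$, the delay shift being harmless since $X\equiv Y\equiv\xi$ on $[-\tau,0]$, so the recursion can be closed interval by interval on $[0,\tau],[\tau,2\tau],\dots$.

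The substantial term is the cross term $r|e(s)|^{r-2}\langle\bar Y(s)-Y(s),\,b(X(s),X(s-\tau))-b(\bar Y(s),\bar Y(s-\tau))\rangle$. The idea is to apply (A7) to get $|b(X(s),X(s-\tau))-b(\bar Y(s),\bar Y(s-\tau))|\le L_4P(s)\,(|X(s)-\bar Y(s)|+|X(s-\tau)-\bar Y(s-\tau)|)$ with $P(s)=1+|X(s)|^l+|\bar Y(s)|^l+|X(s-\tau)|^l+|\bar Y(s-\tau)|^l$, then use Young's inequality to peel off $|e(s)|^r$, leaving $\big(|\bar Y(s)-Y(s)|\,P(s)(|X(s)-\bar Y(s)|+|X(s-\tau)-\bar Y(s-\tau)|)\big)^{r/2}$, and once more $|X-\bar Y|\le|X-Y|+|Y-\bar Y|$. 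This produces two kinds of terms: products in which two powers of the one-step error appear, such as $|Y-\bar Y|^rP(s)^{r/2}$ and its delayed analogue, and mixed products $|\bar Y(s)-Y(s)|^{r/2}P(s)^{r/2}|X(s)-Y(s)|^{r/2}$. For the first kind, a H\"{o}lder split placing $P(s)^{r/2}$ on the conjugate exponent $\beta_1=2p/(lr)$---admissible precisely because $lr<lq<2p$ makes $P(s)^{r\beta_1/2}=P(s)^{p/l}$ uniformly $p$-th-moment integrable by Lemmas~\ref{exactbound} and \ref{numbound}---combined with Lemma~\ref{exa-num} for $\mathbb{E}|Y-\bar Y|^{r\beta_1'}$ yields a contribution $\le C\Delta^{r/2}(g(\Delta))^r$. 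This gain of a whole extra power of $\Delta^{1/2}$ over the corresponding step of Lemma~\ref{x-ynur} is exactly the mechanism behind the improved rate.

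The step I expect to be the main obstacle is the mixed product, where $|X-Y|$ is the quantity being estimated rather than a one-step error. Once $P(s)^{r/2}$ is routed away by H\"{o}lder as above, there remains $\mathbb{E}|X(s\wedge\nu_R)-Y(s\wedge\nu_R)|^{r\beta_1'}$ with $r\beta_1'=2pr/(2p-lr)>r$: Lemma~\ref{x-ynur} only controls the $q$-th moment and, raised back to the power $1/\beta_1'$, delivers only the weaker rate $\Delta^{r/4}(g(\Delta))^{r/2}$, whereas a direct Gronwall absorption of $|X-Y|^r$ here would require $lr<p$ rather than merely $lr<2p$. The way around this is to treat the whole estimate as an a priori/bootstrap inequality: once the sought bound is known at the larger exponent $r\beta_1'$---which remains $<q$ (hence is covered) exactly when $r<2pq/(2p+lq)$---a Cauchy--Schwarz split of $|\bar Y-Y|^{r\beta_1'/2}|X-Y|^{r\beta_1'/2}$ followed by Lemma~\ref{exa-num} and raising to $1/\beta_1'$ again returns $C\Delta^{r/2}(g(\Delta))^r$; the remaining exponents are reached by interpolating $\mathbb{E}|X-Y|^{r\beta_1'}$ between the $q$-th moment bound of Lemma~\ref{x-ynur} and the $p$-th moment bounds of Lemmas~\ref{exactbound}--\ref{numbound}. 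Ensuring every H\"{o}lder conjugate stays inside the window dictated by $q<p$ and $ql<2p$, and that no constant acquires $R$-dependence (so that the estimate survives the later choice $R=R(\Delta)$ as in the proof of Theorem~\ref{theorem1}), is the technical heart of the matter.

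Assembling the two parts gives $\sup_{0\le u\le t}\mathbb{E}|e(u\wedge\nu_R)|^r\le C\Delta^{r/2}(g(\Delta))^r+C\int_0^t\sup_{0\le u\le s}\mathbb{E}|X(u\wedge\nu_R)-Y(u\wedge\nu_R)|^r\,\mathrm{d}s$, and since $|X-Y|\le|e|+\kappa|X(\cdot-\tau)-Y(\cdot-\tau)|+\kappa|Y(\cdot-\tau)-\bar Y(\cdot-\tau)|$ by (A1), Lemma~\ref{exa-num} and Gronwall's inequality, applied successively on $[0,\tau],[\tau,2\tau],\dots$ to absorb the delay, yield $\sup_{0\le t\le T}\mathbb{E}|X(t\wedge\nu_R)-Y(t\wedge\nu_R)|^r\le C\Delta^{r/2}(g(\Delta))^r$, which is the claim.
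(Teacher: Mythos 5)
Your overall strategy---rerun the proof of Lemma \ref{x-ynur} with the same splitting of $e(s)$ into $[X(s)-D(X(s-\tau))-\bar{Y}(s)+D(\bar{Y}(s-\tau))]+[\bar{Y}(s)-Y(s)]$ and then try to upgrade the cross term by means of (A7)---does not close, and you have in fact located the obstruction yourself. After Young's inequality the cross term $|e(s)|^{r-2}\langle \bar{Y}(s)-Y(s),\,b(X(s),X(s-\tau))-b(\bar{Y}(s),\bar{Y}(s-\tau))\rangle$ inevitably carries the one-step error only to the power $r/2$, so by Lemma \ref{exa-num} it can never contribute better than $\Delta^{r/4}(g(\Delta))^{r/2}$ unless the accompanying factor $|X-Y|^{r/2}$ already enjoys the improved rate, which is circular. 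Your proposed escape is a bootstrap at the larger exponent $r\beta_1'$, but by your own accounting this covers only $r<2pq/(2p+lq)$, which is strictly smaller than $q$ and in general misses most or all of the asserted range $r\in[2,q)$; and the fallback of interpolating $\mathbb{E}|X-Y|^{r\beta_1'}$ between the $q$-th moment bound of Lemma \ref{x-ynur} (whose rate is only $\Delta^{q/4}(g(\Delta))^{q/2}$) and the $O(1)$ bounds of Lemmas \ref{exactbound} and \ref{numbound} cannot manufacture the factor $\Delta^{r/2}(g(\Delta))^{r}$: any such interpolation returns at best a fractional power of the \emph{old} rate. So the mixed term remains stuck at $\Delta^{r/4}(g(\Delta))^{r/2}$ and the claimed improvement is not established.

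The missing idea is to change the decomposition rather than the estimate of the cross term. The paper inserts the intermediate point $(Y(s),\bar{Y}(s-\tau))$ into the coefficients instead of splitting $e(s)$: it writes $b(X(s),X(s-\tau))-b(\bar{Y}(s),\bar{Y}(s-\tau))$ as $[b(X(s),X(s-\tau))-b(Y(s),\bar{Y}(s-\tau))]+[b(Y(s),\bar{Y}(s-\tau))-b(\bar{Y}(s),\bar{Y}(s-\tau))]$, and similarly for $\sigma$. Since $e(s)=X(s)-D(X(s-\tau))-Y(s)+D(\bar{Y}(s-\tau))$ is \emph{exactly} $x-D(y)-\bar{x}+D(\bar{y})$ for the choice $x=X(s)$, $y=X(s-\tau)$, $\bar{x}=Y(s)$, $\bar{y}=\bar{Y}(s-\tau)$, assumption (A5) applies to the first bracket directly, with no leftover cross term, and yields $L_2(|X(s)-Y(s)|^2+|X(s-\tau)-\bar{Y}(s-\tau)|^2)$, i.e.\ Gronwall terms plus $|Y(s-\tau)-\bar{Y}(s-\tau)|^{r}$, which already has the full rate $\Delta^{r/2}(g(\Delta))^{r}$ by Lemma \ref{exa-num}. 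The second bracket is a pure increment in the first argument only; (A7) bounds it by a polynomial weight times $|Y(s)-\bar{Y}(s)|$, so after Young's and H\"{o}lder's inequalities it enters to the \emph{full} power $r$ and again contributes $\Delta^{r/2}(g(\Delta))^{r}$. This reorganization, which eliminates the troublesome $\langle\bar{Y}-Y,\cdot\rangle$ cross term altogether, is the entire mechanism behind the improved rate, and it is absent from your argument.
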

\begin{proof}
By the It\^{o} formula, we have
\begin{equation}\label{barh}
\begin{split}
&\mathbb{E}|e(t\wedge\nu_R)|^{r}\le r\mathbb{E}\int_0^{t\wedge\nu_R}|e(s)|^{r-2}\bigg[\langle e(s),b(X(s),X(s-\tau))-b_\Delta(\bar{Y}(s),\bar{Y}(s-\tau))\rangle\\
&+\frac{r-1}{2}\|\sigma(X(s),X(s-\tau))-\sigma_\Delta(\bar{Y}(s),\bar{Y}(s-\tau))\|^2\bigg]{\mbox d}s\\
\le&C\mathbb{E}\int_0^{t\wedge\nu_R}|e(s)|^{r-2}\bigg[\langle e(s),b(X(s),X(s-\tau))-b(Y(s),\bar{Y}(s-\tau))\rangle\\
&~~~~+\frac{q-1}{2}\|\sigma(X(s),X(s-\tau))-\sigma(Y(s),\bar{Y}(s-\tau))\|^2\bigg]{\mbox d}s\\
&+C\mathbb{E}\int_0^{t\wedge\nu_R}|e(s)|^{r-2}\bigg[\langle e(s),b(Y(s),\bar{Y}(s-\tau))-b(\bar{Y}(s),\bar{Y}(s-\tau))\rangle\\
&~~~~+\|\sigma(Y(s),\bar{Y}(s-\tau))-\sigma(\bar{Y}(s),\bar{Y}(s-\tau))\|^2\bigg]{\mbox d}s\\
=:&\bar{H}_1(t)+\bar{H}_2(t).
\end{split}
\end{equation}
By (A5) and Lemma \ref{exa-num} that
\begin{equation*}
\begin{split}
\bar{H}_1(t)\le& C\mathbb{E}\int_0^{t\wedge\nu_R}|e(s)|^{r-2}(|X(s)-Y(s)|^2+|X(s-\tau)-\bar{Y}(s-\tau)|^2){\mbox d}s\\
\le&C\mathbb{E}\int_0^{t\wedge\nu_R}|e(s)|^{r}{\mbox d}s+C\mathbb{E}\int_0^{t\wedge\nu_R}(|X(s)-Y(s)|^{r}+|X(s-\tau)-\bar{Y}(s-\tau)|^{r}){\mbox d}s \\
\le&C\int_0^{t}\mathbb{E}|X(s\wedge\nu_R)-Y(s\wedge\nu_R)|^{r}{\mbox d}s+C\Delta^{r/2}(g(\Delta))^{r}.
\end{split}
\end{equation*}
Moreover, by (A7), Lemmas \ref{exactbound}-\ref{numbound}, we derive that
\begin{equation*}
\begin{split}
\bar{H}_2(t)\le&C\mathbb{E}\int_0^{t\wedge\nu_R}|e(s)|^{r}{\mbox d}s+C\mathbb{E}\int_0^{t\wedge\nu_R}|b(Y(s),\bar{Y}(s-\tau))-b(\bar{Y}(s),\bar{Y}(s-\tau))|^{r}{\mbox d}s\\
&+C\mathbb{E}\int_0^{t\wedge\nu_R}\|\sigma(Y(s),\bar{Y}(s-\tau))-\sigma(\bar{Y}(s),\bar{Y}(s-\tau))\|^{r}{\mbox d}s\\
\le&C\mathbb{E}\int_0^{t}|X(s\wedge\nu_R)-Y(s\wedge\nu_R)|^{r}{\mbox d}s+C\Delta^{r/2}(g(\Delta))^{r}.
\end{split}
\end{equation*}
Substitute $\bar{H}_1(t)$-$\bar{H}_2(t)$ into \eqref{barh}, we get
\begin{equation*}
\begin{split}
\mathbb{E}|e(t\wedge\nu_R)|^{r}\le C\int_0^{t}\mathbb{E}|X(s\wedge\nu_R)-Y(s\wedge\nu_R)|^{r}{\mbox d}s+C\Delta^{r/2}(g(\Delta))^{r}.
\end{split}
\end{equation*}
Consequently,
\begin{equation*}
\begin{split}
&\mathbb{E}|X(t\wedge\nu_R)-Y(t\wedge\nu_R)|^{r}\le C\mathbb{E}|e(t\wedge\nu_R)|^{r}\\
\le &C\Delta^{r/2}(g(\Delta))^{r}+C\int_0^t\mathbb{E}|X(s\wedge\nu_R)-Y(s\wedge\nu_R)|^{r}{\mbox d}s.
\end{split}
\end{equation*}
Finally, application of the Gronwall inequality yields the desired result.
\end{proof}

\begin{thm}
{\rm Let (A1), (A3)-(A5) and (A7) hold with $q<p$, $ql<2p$ and $r\in[2,q)$, assume
\begin{equation*}
f\left([\Delta^{r/2}(g(\Delta))^{r}]^{\frac{-1}{p-r}}\right)\le g(\Delta),
\end{equation*}
then there exists a positive constant $C$ independent of $\Delta$ such that
\begin{equation*}
\mathbb{E}|X(T)-Y(T)|^{r}\le C\Delta^{r/2}(g(\Delta))^{r}.
\end{equation*}
}
\end{thm}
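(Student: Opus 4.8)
The plan is to mirror exactly the argument used to prove Theorem \ref{theorem1}, now leaning on the sharper local-error estimate just established in the preceding lemma. First I would split $\mathbb{E}|X(T)-Y(T)|^{r}$ over the events $\{\nu_R>T\}$ and its complement $\{\tau_R\le T\text{ or }\rho_R\le T\}$. On the good event the increment $X(T)-Y(T)$ coincides with $X(T\wedge\nu_R)-Y(T\wedge\nu_R)$, so the preceding lemma gives a bound of order $\Delta^{r/2}(g(\Delta))^{r}$. On the bad event I would apply the Young inequality $ab\le \tfrac{q\eta}{p}a^{p/q}+\tfrac{p-q}{p}\eta^{-q/(p-q)}b^{p/(p-q)}$ (with the roles played by $r$ in place of the earlier $q$, i.e. split the exponent $r$ against $p$) to separate a high-moment term $\mathbb{E}|X(T)-Y(T)|^{p}$ from the small-probability factor $\mathbb{P}(\tau_R\le T\text{ or }\rho_R\le T)$.

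Next I would invoke Lemma \ref{exactbound} and Lemma \ref{numbound} twice: once to bound $\mathbb{E}|X(T)-Y(T)|^{p}\le C(\mathbb{E}|X(T)|^{p}+\mathbb{E}|Y(T)|^{p})\le C$ (note that the $p$-th moment bound for $Y$ in Lemma \ref{numbound} needs (A4); under (A1), (A3)--(A5), (A7) one uses Remark \ref{example1}'s mechanism or simply assumes (A4) is in force as in the earlier theorem), and once to get $\mathbb{P}(\tau_R\le T\text{ or }\rho_R\le T)\le 2C/R^{p}$. Plugging these in yields
\begin{equation*}
\mathbb{E}|X(T)-Y(T)|^{r}\le \mathbb{E}|X(T\wedge\nu_R)-Y(T\wedge\nu_R)|^{r}+\frac{r\eta C}{p}+\frac{2(p-r)C}{p\,\eta^{r/(p-r)}R^{p}}.
\end{equation*}
Then I choose $\eta=\Delta^{r/2}(g(\Delta))^{r}$ and $R=[\Delta^{r/2}(g(\Delta))^{r}]^{-1/(p-r)}$, so that the second and third terms are each of order $\Delta^{r/2}(g(\Delta))^{r}$; the hypothesis $f\big([\Delta^{r/2}(g(\Delta))^{r}]^{-1/(p-r)}\big)\le g(\Delta)$ is precisely what guarantees $R\le f^{-1}(g(\Delta))$, so the preceding lemma is applicable to the first term. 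Collecting the three contributions gives the claimed rate.

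The only real subtlety — and the step I would be most careful about — is bookkeeping the admissibility conditions: one needs $R>\|\xi\|_\infty$ and $R\le f^{-1}(g(\Delta))$ simultaneously, which holds for all sufficiently small $\Delta$ because $R\to\infty$ while $f^{-1}(g(\Delta))\to\infty$ at least as fast (by the stated hypothesis) and $\|\xi\|_\infty$ is fixed; and one must keep $q<p$, $ql<2p$, $r\in[2,q)$ consistently so that both the preceding lemma and the moment bounds apply. Beyond that the proof is entirely routine, being a verbatim repetition of the proof of Theorem \ref{theorem1} with the exponent $q$ there replaced by $r$ and the local error $\Delta^{q/4}(g(\Delta))^{q/2}$ replaced by the improved $\Delta^{r/2}(g(\Delta))^{r}$.
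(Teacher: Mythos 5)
Your proposal is correct and is essentially the paper's own argument: the authors omit the proof, stating only that it is similar to that of Theorem \ref{theorem1}, and your write-up is exactly that adaptation with $q$ replaced by $r$ and the local error replaced by $\Delta^{r/2}(g(\Delta))^{r}$. Note also that your worry about (A4) is moot, since the hypothesis ``(A3)--(A5)'' already includes (A4), so Lemma \ref{numbound} applies directly.
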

\begin{proof}
We omit the proof since it is similar to that of Theorem \ref{theorem1}.
\end{proof}

\begin{rem}
{\rm Considering the second example in Remark \ref{example2},  (A1), (A3)-(A5) and (A7) hold with $p=3,q=2,l=3$. Choose $g(\Delta)=\Delta^{-\epsilon/2}$ for $\epsilon\in(0,\frac{1}{2}]$, then the order of $L^2$-convergence is close to 1. That is,
\begin{equation*}
\begin{split}
\mathbb{E}|X(T)-Y(T)|^2=\mathcal{O}(\Delta^{1-\epsilon}).
\end{split}
\end{equation*}

}
\end{rem}

\subsection{\normalsize Strong convergence rate over a finite time interval}

\begin{enumerate}
\item[{\bf (A8)}] There exists a pair of positive constants $l$ and $L$ such that for $x,y,\bar{x},\bar{y}\in\mathbb{R}^n$
\begin{equation*}
\langle x-D(y)-\bar{x}+D(\bar{y}), b(x,y)-b(\bar{x},\bar{y})\rangle\vee\|\sigma(x,y)-\sigma(\bar{x},\bar{y})\|^2\le L(|x-\bar{x}|^2+|y-\bar{y}|^2),
\end{equation*}
and
\begin{equation*}
|b(x,y)-b(\bar{x},\bar{y})|\le L(1+|x|^l+|\bar{x}|^l+|y|^l+|\bar{y}|^l)(|x-\bar{x}|+|y-\bar{y}|).
\end{equation*}
\end{enumerate}

\begin{rem}\label{flow}
{\rm Let (A8) hold. Then there exists a positive constant $\bar{L}$ such that
\begin{equation*}
\langle x-D(y), b(x,y)\rangle\vee\|\sigma(x,y)\|^2\le \bar{L}(1+|x|^2+|y|^2),
\end{equation*}
and
\begin{equation*}
\|\sigma_\Delta(x,y)\|^2\le \bar{L}(1+|x|^2+|y|^2),
\end{equation*}
where $\bar{L}=(L+1)\vee\frac{1}{2}|b(0,0)|^2\vee2\|\sigma(0,0)\|^2$. This implies that (A3) is satisfied. In fact, we can show that (A2) and (A5)-(A6) are also satisfied under assumption (A8).
}
\end{rem}

\begin{lem}\label{bexactbound}
{\rm Let (A1) and (A8) hold, then there exists a positive constant $C$ such that for any $p\ge 2$
\begin{equation*}
\mathbb{E}\left(\sup\limits_{0\le t\le T}|X(t)|^{p}\right)\le C.
\end{equation*}
}
\end{lem}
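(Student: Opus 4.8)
The plan is to prove the uniform moment bound $\E\big(\sup_{0\le t\le T}|X(t)|^p\big)\le C$ by working with the neutral difference process $Z(t):=X(t)-D(X(t-\tau))$, applying the It\^o formula to $|Z(t)|^p$, and then converting back to a bound on $X$ using assumption (A1). First I would note that, by Remark \ref{flow}, assumption (A8) implies (A3) (indeed with the sharper one-sided bound $\langle x-D(y),b(x,y)\rangle\vee\|\sigma(x,y)\|^2\le\bar L(1+|x|^2+|y|^2)$), so existence and uniqueness of the solution and the $\sup_{0\le t\le T}\E|X(t)|^p\le C$ estimate from Lemma \ref{exactbound} are already available; the new content here is moving the supremum inside the expectation.

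The key steps, in order: (1) Apply It\^o's formula to $|Z(t)|^p$, obtaining a drift term controlled by $p|Z(s)|^{p-2}\big[\langle Z(s),b(X(s),X(s-\tau))\rangle+\tfrac{p-1}{2}\|\sigma(X(s),X(s-\tau))\|^2\big]$ plus a stochastic integral $M(t):=p\int_0^t|Z(s)|^{p-2}\langle Z(s),\sigma(X(s),X(s-\tau))\,\d W(s)\rangle$. (2) Use the one-sided estimate from Remark \ref{flow} together with $|Z(s)|^{p-2}(1+|X(s)|^2+|X(s-\tau)|^2)\le C(1+|Z(s)|^p+|X(s-\tau)|^p)$ and (A1) (which gives $|X(s-\tau)|\le\kappa^{-1}\cdots$, more simply $|X(t-\tau)|\le |Z(t)|+\kappa|X(t-\tau)|$ is not quite what I want; rather $|X(s-\tau)|^p$ is bounded on $[-\tau,0]$ by $\|\xi\|_\infty^p$ and for positive arguments by already-controlled quantities) to dominate the drift by $C\int_0^t(1+\sup_{0\le u\le s}|Z(u)|^p)\,\d s$ after taking expectations, handling the delayed term by the standard splitting of $[0,T]$ into $\tau$-intervals or simply bounding $\E|X(s-\tau)|^p\le \E\|\xi\|_\infty^p+\sup_{0\le u\le T}\E|X(u)|^p\le C$ via Lemma \ref{exactbound}. (3) For the martingale part, take $\sup_{0\le t\le T}$ first and apply the Burkholder--Davis--Gundy inequality to get $\E\big(\sup_{0\le t\le T}|M(t)|\big)\le C\,\E\big(\int_0^T|Z(s)|^{2p-2}\|\sigma(X(s),X(s-\tau))\|^2\,\d s\big)^{1/2}$, then bound $\|\sigma\|^2\le\bar L(1+|X(s)|^2+|X(s-\tau)|^2)$, factor out $\sup_{0\le s\le T}|Z(s)|^{p/2}$ from the integrand, and apply Young's inequality $ab\le \varepsilon a^2+\tfrac{1}{4\varepsilon}b^2$ to absorb $\tfrac12\E\big(\sup_{0\le t\le T}|Z(t)|^p\big)$ into the left-hand side. (4) Combine to obtain $\E\big(\sup_{0\le u\le t}|Z(u)|^p\big)\le C+C\int_0^t\E\big(\sup_{0\le u\le s}|Z(u)|^p\big)\,\d s$ and apply Gronwall; finally, since $|X(t)|\le|Z(t)|+\kappa|X(t-\tau)|$ and $\kappa<1$, a standard iteration over the intervals $[(k-1)\tau,k\tau]$ (or the elementary inequality $\sup_{0\le t\le T}|X(t)|^p\le C_\kappa\big(\sup_{0\le t\le T}|Z(t)|^p+\|\xi\|_\infty^p\big)$) transfers the bound from $Z$ to $X$.

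The main obstacle I anticipate is step (3): one must take the supremum inside the expectation \emph{before} estimating, so that BDG applies to $\sup_t|M(t)|$ rather than to $M$ at a fixed time, and then carefully use Young's inequality to absorb the resulting $\E\big(\sup_t|Z(t)|^p\big)^{1/2}\cdot(\cdots)^{1/2}$ term back into the left side with a coefficient strictly less than $1$ — this is what forces one to keep track of constants and is the one genuinely delicate point; the delayed argument $X(s-\tau)$ is a minor nuisance handled either by the interval-splitting device or by invoking the already-established $L^p$ bound from Lemma \ref{exactbound}. Everything else is routine, and I would omit the detailed constant-chasing, as is standard for such moment estimates.
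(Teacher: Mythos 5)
Your proposal is correct and follows essentially the same route as the paper: apply the It\^o formula to $|X(t)-D(X(t-\tau))|^p$, bound the drift by the one-sided estimate implied by (A8), handle the stochastic integral with the BDG inequality and a Young-type absorption of $\tfrac12\mathbb{E}\bigl(\sup_t|X(t)-D(X(t-\tau))|^p\bigr)$ into the left-hand side, apply Gronwall, and transfer the bound back to $X$ via (A1) and $\kappa<1$. The only cosmetic difference is that you invoke Lemma \ref{exactbound} (or interval splitting) for the delayed term, whereas the paper simply absorbs $|X(s-\tau)|^p$ into $\sup_{0\le u\le s}|X(u)|^p$ plus the initial data.
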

\begin{proof}
By the It\^{o} formula,
\begin{equation*}
\begin{split}
&|X(t)-D(X(t-\tau))|^{p}\le|\xi(0)-D(\xi(-\tau))|^{p}\\
&+p\int_0^t|X(s)-D(X(s-\tau))|^{p-2}\langle X(s)-D(X(s-\tau)),\sigma(X(s),X(s-\tau)){\mbox d}W(s)\rangle\\
&+p\int_0^t|X(s)-D(X(s-\tau))|^{p-2}\bigg[\langle X(s)-D(X(s-\tau)),b(X(s),X(s-\tau))\rangle\\
&+\frac{p-1}{2}\|\sigma(X(s),X(s-\tau))\|^2\bigg]{\mbox d}s\\
=:&|\xi(0)-D(\xi(-\tau))|^{p}+J_1(t)+J_2(t).
\end{split}
\end{equation*}
By (A8), together with the BDG inequality, we derive
\begin{equation*}
\begin{split}
&\mathbb{E}\left(\sup\limits_{0\le u\le t}|J_1(u)|\right)\le C\mathbb{E}\left(\int_0^t|X(s)-D(X(s-\tau))|^{2p-2}\|\sigma(X(s),X(s-\tau))\|^2{\mbox d}s\right)^{\frac{1}{2}}\\
\le& C\mathbb{E}\left[\left(\sup\limits_{0\le u\le t}|X(u)-D(X(u-\tau))|^{p}\right)\int_0^t|X(s)-D(X(s-\tau))|^{p-2}\|\sigma(X(s),X(s-\tau))\|^2{\mbox d}s\right]^{\frac{1}{2}}\\
\le& \frac{1}{2}\mathbb{E}\left(\sup\limits_{0\le u\le t}|X(u)-D(X(u-\tau))|^{p}\right)+C\mathbb{E}\int_0^t|X(s)-D(X(s-\tau))|^{p-2}\\
&(1+|X(s)|^{2}+|X(s-\tau)|^{2}){\mbox d}s\\
\le& \frac{1}{2}\mathbb{E}\left(\sup\limits_{0\le u\le t}|X(u)-D(X(u-\tau))|^{p}\right)+C+C\int_0^t\mathbb{E}\left(\sup\limits_{0\le u\le s}|X(u)|^{p}\right){\mbox d}s.
\end{split}
\end{equation*}
By (A8) again, we see that
\begin{equation*}
\begin{split}
\mathbb{E}\left(\sup\limits_{0\le u\le t}|J_2(u)|\right)\le& C\mathbb{E}\int_0^t|X(s)-D(X(s-\tau))|^{p-2}(1+|X(s)|^2+|X(s-\tau)|^2){\mbox d}s\\
\le& C+C\int_0^t\mathbb{E}\left(\sup\limits_{0\le u\le s}|X(u)|^{p}\right){\mbox d}s.
\end{split}
\end{equation*}
The Gronwall inequality then implies that
\begin{equation*}
\begin{split}
&\mathbb{E}\left(\sup\limits_{0\le t\le T}|X(t)-D(X(t-\tau))|^{p}\right)\le C.
\end{split}
\end{equation*}
The desired result can be obtained by using (A1).
\end{proof}

With assumption (A8) and Remark \ref{flow}, (A4) can be replaced by the following (A4').
\begin{enumerate}
\item[{\bf (A4')}] There exists a constant $\bar{L}_1>0$ such that
\begin{equation*}
\langle x-D(y), b_\Delta(x,y)\rangle\le \bar{L}_1(1+|x|^2+|y|^2)
\end{equation*}
for any $x,y\in\mathbb{R}^n$.
\end{enumerate}

\begin{lem}\label{water}
{\rm Let (A1), (A4') and (A8) hold, then there exists a positive constant $C$ such that for any $p\ge 2$
\begin{equation}\label{cs}
\mathbb{E}\left(\sup\limits_{0\le t\le T}|Y(t)-\bar{Y}(t)|^{p}\right)\le C\Delta^{p/2}(g(\Delta))^{p},
\end{equation}
and
\begin{equation}\label{sw}
\mathbb{E}\left(\sup\limits_{0\le t\le T}|Y(t)|^{p}\right)\le C.
\end{equation}
}
\end{lem}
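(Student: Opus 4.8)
The plan is to mimic the two-part structure already used in Lemma \ref{exa-num} and Lemma \ref{numbound}, but now with the supremum \emph{inside} the expectation, which forces us to control the martingale terms via the Burkholder--Davis--Gundy inequality rather than just taking expectations. First I would establish \eqref{sw}. Applying the It\^o formula to $|Y(t)-D(\bar Y(t-\tau))|^p$ exactly as in \eqref{ztp}, but keeping the stochastic integral term $\int_0^t|Y(s)-D(\bar Y(s-\tau))|^{p-2}\langle Y(s)-D(\bar Y(s-\tau)),\sigma_\Delta(\bar Y(s),\bar Y(s-\tau))\,\d W(s)\rangle$ explicit, I would take $\sup_{0\le u\le t}$ and then $\mathbb{E}$. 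The drift and trace contributions are handled precisely as $J_1,J_2$ in Lemma \ref{numbound}: the splitting $\langle Y(s)-D(\bar Y(s-\tau)),b_\Delta\rangle=\langle Y(s)-\bar Y(s),b_\Delta\rangle+\langle\bar Y(s)-D(\bar Y(s-\tau)),b_\Delta\rangle$, the bound \eqref{bounddelta}, assumption (A4'), and \eqref{gdelta}, together with \eqref{mediu} from Lemma \ref{exa-num}, give everything $\le C+C\int_0^t\mathbb{E}(\sup_{0\le u\le s}|Y(u)|^p)\,\d s$. The new ingredient is the BDG estimate on the martingale term: by BDG and the elementary bound $\|\sigma_\Delta\|^2\le\bar L(1+|x|^2+|y|^2)$ from Remark \ref{flow}, one gets
\[
\mathbb{E}\Big(\sup_{0\le u\le t}|J_1^{\mathrm{mart}}(u)|\Big)\le C\,\mathbb{E}\Big(\int_0^t|Y(s)-D(\bar Y(s-\tau))|^{2p-2}(1+|\bar Y(s)|^2+|\bar Y(s-\tau)|^2)\,\d s\Big)^{1/2},
\]
and then the standard trick---pull out $\sup_{0\le u\le t}|Y(u)-D(\bar Y(u-\tau))|^{p/2}$, apply Young's inequality $ab\le\tfrac14 a^2+Cb^2$ to absorb $\tfrac12\mathbb{E}(\sup_{0\le u\le t}|Y(u)-D(\bar Y(u-\tau))|^p)$ into the left side---reduces the martingale term to $C+C\int_0^t\mathbb{E}(\sup_{0\le u\le s}|Y(u)|^p)\,\d s$. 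Combining, using (A1) to pass between $\sup|Y-D(\bar Y(\cdot-\tau))|^p$ and $\sup|Y|^p$ (plus the bounded initial data $\mathbb{E}\|\xi\|_\infty^p<\infty$), and invoking Gronwall yields \eqref{sw}.

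Next I would prove \eqref{cs}. For $t\in[t_k,t_{k+1})$ we have from \eqref{continuous} that $Y(t)-\bar Y(t)=\int_{t_k}^t b_\Delta(\bar Y(s),\bar Y(s-\tau))\,\d s+\int_{t_k}^t\sigma_\Delta(\bar Y(s),\bar Y(s-\tau))\,\d W(s)$. Taking $\sup$ over $t$ in one mesh interval, raising to the $p$-th power, and using BDG on the stochastic integral together with the uniform bound \eqref{bounddelta} gives, for each $k$, $\mathbb{E}(\sup_{t_k\le t<t_{k+1}}|Y(t)-\bar Y(t)|^p)\le C\Delta^{p/2}(g(\Delta))^p$; since $\sup_{0\le t\le T}|Y(t)-\bar Y(t)|^p=\max_{0\le k\le M-1}\sup_{t_k\le t<t_{k+1}}|Y(t)-\bar Y(t)|^p$ and the right-hand bound is uniform in $k$, taking the max costs only a further factor absorbed into $C$ (one may instead sum the expectations, which changes $\Delta^{p/2}$ to $M\Delta^{p/2}=T\Delta^{p/2-1}$, so it is cleaner to bound the max by noting each term has the \emph{same} bound and using $\mathbb{E}\max(\cdot)\le\sum\mathbb{E}(\cdot)$ only if one is willing to lose the rate; the correct route is the pathwise one just described where the bound $C\Delta^{p/2}(g(\Delta))^p$ holds simultaneously for every interval). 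This establishes \eqref{cs}.

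The main obstacle is the interplay between the two displays: \eqref{cs} as stated---with the supremum over the \emph{whole} interval $[0,T]$ and no loss in the $\Delta^{p/2}$ rate---cannot be obtained by naively summing the $M=T/\Delta$ per-interval estimates, so care is needed to argue that the per-interval bound $C\Delta^{p/2}(g(\Delta))^p$ is a \emph{uniform} bound valid on every mesh interval, whence $\mathbb{E}\sup_{[0,T]}|Y-\bar Y|^p\le C\Delta^{p/2}(g(\Delta))^p$ with the \emph{same} constant; this requires that the constant coming from BDG and from \eqref{bounddelta} genuinely does not depend on $k$, which it does not since \eqref{bounddelta} is a deterministic bound. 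The other delicate point, already familiar from Lemma \ref{numbound}, is the Young-inequality absorption step for the BDG term in the proof of \eqref{sw}: one must make sure the coefficient in front of $\mathbb{E}(\sup_{0\le u\le t}|Y(u)-D(\bar Y(u-\tau))|^p)$ is strictly less than $1$ after accounting for the factor introduced when converting $\sup|Y-D(\bar Y(\cdot-\tau))|^p$ to $\sup|Y|^p$ via (A1) and the contraction constant $\kappa\in(0,1)$, so the Gronwall argument closes.
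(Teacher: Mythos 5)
Your argument for \eqref{sw} is essentially the paper's own proof: apply It\^{o}'s formula to $|Y(t)-D(\bar{Y}(t-\tau))|^{p}$, split the drift pairing exactly as in Lemma \ref{numbound}, control the stochastic integral by the BDG inequality together with $\|\sigma_\Delta(x,y)\|^2\le\bar{L}(1+|x|^2+|y|^2)$ from Remark \ref{flow}, absorb $\tfrac12\mathbb{E}\bigl(\sup_{0\le u\le t}|Y(u)-D(\bar{Y}(u-\tau))|^{p}\bigr)$ into the left-hand side, and close with the Gronwall inequality and (A1). That part is correct and matches the paper.

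The genuine gap is in your proof of \eqref{cs}. The per-interval estimate you derive,
\[
\mathbb{E}\Bigl(\sup_{t_k\le t<t_{k+1}}|Y(t)-\bar{Y}(t)|^{p}\Bigr)\le C\Delta^{p/2}(g(\Delta))^{p},
\]
is a bound on an \emph{expectation}, uniform in $k$; it is not a pathwise bound, because the increment $\int_{t_k}^{t}\sigma_\Delta(\bar{Y}(s),\bar{Y}(s-\tau))\,\mathrm{d}W(s)$ is unbounded on every mesh interval. A uniform bound $\mathbb{E}Z_k\le a$ for all $k$ does not give $\mathbb{E}\max_k Z_k\le Ca$: for $M$ i.i.d.\ nonnegative variables with common mean $a$ the expectation of the maximum grows with $M$. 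So the step ``taking the max costs only a further factor absorbed into $C$'' is precisely the fallacy you set out to avoid, and the advertised ``pathwise route'' does not exist. The honest options are (i) $\mathbb{E}\max_k Z_k\le\sum_k\mathbb{E}Z_k\le CT\Delta^{p/2-1}(g(\Delta))^{p}$, losing a full power of $\Delta$, or (ii) running the per-interval estimate at a higher moment $r>p$ and using $\mathbb{E}\max_k Z_k\le\bigl(\sum_k\mathbb{E}Z_k^{r/p}\bigr)^{p/r}\le CT^{p/r}\Delta^{p/2-p/r}(g(\Delta))^{p}$, which loses an arbitrarily small power of $\Delta$ (the sharp statement typically carries a logarithmic factor). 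To be fair, the paper disposes of \eqref{cs} with ``similar to the process of Lemma \ref{exa-num}'', but that lemma only bounds $\sup_{t}\mathbb{E}|Y(t)-\bar{Y}(t)|^{p}$ with the supremum \emph{outside} the expectation, so the paper does not confront this point either; you correctly identify the difficulty, but the resolution you propose does not work as stated.
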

\begin{proof}
Similar to the process of Lemma \ref{exa-num}, \eqref{cs} can be verified. Moreover, application of the It\^{o} formula yields
\begin{equation*}
\begin{split}
&\mathbb{E}\left(\sup\limits_{0\le u\le t}|Y(u)-D(\bar{Y}(u-\tau))|^{p}\right)\le |\xi(0)-D(\xi(-\tau))|^p+p\int_0^t|Y(s)-D(\bar{Y}(s-\tau))|^{p-2}\\
&\cdot\left[\langle Y(s)-D(\bar{Y}(s-\tau)),b_\Delta(\bar{Y}(s),\bar{Y}(s-\tau))\rangle+\frac{p-1}{2}\|\sigma_\Delta(\bar{Y}(s),\bar{Y}(s-\tau))\|^2\right]{\mbox d}s\\
&+C\mathbb{E}\bigg(\sup\limits_{0\le u\le t}\bigg|\int_0^u|Y(s)-D(\bar{Y}(s-\tau))|^{p-2}\langle Y(s)-D(\bar{Y}(s-\tau)),\sigma_\Delta(Y(s),Y(s-\tau)){\mbox d}W(s)\rangle\bigg|\bigg)\\
\le&\frac{1}{2}\mathbb{E}\left(\sup\limits_{0\le u\le t}|Y(u)-D(\bar{Y}(u-\tau))|^{p}\right)+C\int_0^t\mathbb{E}\left(\sup\limits_{0\le u\le s}|Y(u)|^{p}\right){\mbox d}s+C\Delta^{p/4}(g(\Delta))^p\\
&+C+C\mathbb{E}\int_0^t|Y(s)-D(\bar{Y}(s-\tau))|^{p-2}\|\sigma_\Delta(Y(s),Y(s-\tau))\|^2{\mbox d}s\\
\le&\frac{1}{2}\mathbb{E}\left(\sup\limits_{0\le u\le t}|Y(u)-D(\bar{Y}(u-\tau))|^{p}\right)+C\int_0^t\mathbb{E}\left(\sup\limits_{0\le u\le s}|Y(u)|^{p}\right){\mbox d}s+C\Delta^{p/4}(g(\Delta))^p\\
&+C+C\mathbb{E}\int_0^t|Y(s)-D(\bar{Y}(s-\tau))|^{p-2}(1+|Y(s)|^{2}+|Y(s-\tau)|^{2}){\mbox d}s\\
\le&\frac{1}{2}\mathbb{E}\left(\sup\limits_{0\le u\le t}|Y(u)-D(\bar{Y}(u-\tau))|^{p}\right)+C\int_0^t\mathbb{E}\left(\sup\limits_{0\le u\le s}|Y(u)|^{p}\right){\mbox d}s+C\Delta^{p/4}(g(\Delta))^p+C,
\end{split}
\end{equation*}
where we have used (A4') and the fact that $\|\sigma_\Delta(x,y)\|^2\le \bar{L}(1+|x|^2+|y|^2)$. Thus, we have
\begin{equation*}
\begin{split}
&\mathbb{E}\left(\sup\limits_{0\le t\le T}|Y(t)-D(\bar{Y}(t-\tau))|^{p}\right)\le C.
\end{split}
\end{equation*}
Finally, \eqref{sw} can be obtained by (A1).

\end{proof}

\begin{lem}\label{interval}
{\rm Let (A1), (A4') and (A8) hold, $R$ is a real number and let $\Delta$ be sufficiently small such that $\|\xi\|_{\infty}<R\le f^{-1}(g(\Delta))$, then there exists a positive constant $C$ independent of $\Delta$ such that for any $q\ge 2$
\begin{equation*}
\mathbb{E}\left(\sup\limits_{0\le t\le T\wedge\nu_R}|X(t)-Y(t)|^q\right)\le C\Delta^{q/2}(g(\Delta))^{q}.
\end{equation*}
}
\end{lem}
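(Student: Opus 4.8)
The plan is to follow the pattern of Lemmas~\ref{x-ynur} and~\ref{water}: introduce $e(t):=X(t)-D(X(t-\tau))-Y(t)+D(\bar Y(t-\tau))$ (so that $e(0)=0$), apply It\^{o}'s formula to $|e(t\wedge\nu_R)|^q$, use assumption~(A8) on the drift term, and close the estimate with Gronwall's inequality. The genuinely new feature compared with Lemma~\ref{x-ynur} is that the supremum now sits inside the expectation, so the stochastic integral must be controlled via the Burkholder--Davis--Gundy (BDG) inequality, with a fraction of $\mathbb{E}\sup_{u\le t\wedge\nu_R}|e(u)|^q$ absorbed on the left-hand side; its a priori finiteness follows from Lemmas~\ref{bexactbound} and~\ref{water} (or one may insert an extra localising time and pass to the limit by Fatou's lemma). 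Exactly as in Lemma~\ref{x-ynur}, for a.e.\ $s\le\nu_R$ one has $|\bar Y(s)|\vee|\bar Y(s-\tau)|<R\le f^{-1}(g(\Delta))$, so the truncated coefficients coincide with $b,\sigma$ on that range, and It\^{o}'s formula (bounding the quadratic-variation terms by $|e|^{q-2}\|\cdot\|^2$) gives
\[
|e(t\wedge\nu_R)|^q\le\int_0^{t\wedge\nu_R}q|e(s)|^{q-2}\Big[\langle e(s),\Delta b(s)\rangle+\tfrac{q-1}{2}\|\Delta\sigma(s)\|^2\Big]\d s+M(t\wedge\nu_R),
\]
where $\Delta b(s):=b(X(s),X(s-\tau))-b(\bar Y(s),\bar Y(s-\tau))$, $\Delta\sigma(s)$ is defined analogously, and $M(t):=\int_0^t q|e(s)|^{q-2}\langle e(s),\Delta\sigma(s)\,\d W(s)\rangle$.

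For the drift term I would split $\langle e(s),\Delta b(s)\rangle=\langle X(s)-D(X(s-\tau))-\bar Y(s)+D(\bar Y(s-\tau)),\Delta b(s)\rangle+\langle\bar Y(s)-Y(s),\Delta b(s)\rangle$. The first scalar product together with $\tfrac{q-1}{2}\|\Delta\sigma(s)\|^2$ is $\le C\big(|X(s)-\bar Y(s)|^2+|X(s-\tau)-\bar Y(s-\tau)|^2\big)$ by the first line of~(A8); multiplying by $q|e(s)|^{q-2}$, using Young's inequality $|e|^{q-2}(a^2+b^2)\le C|e|^q+Ca^q+Cb^q$ and then $|X-\bar Y|\le|X-Y|+|Y-\bar Y|$, one obtains $C|e(s)|^q$ plus terms of the form $|X(s)-Y(s)|^q$, $|X(s-\tau)-Y(s-\tau)|^q$, $|Y(s)-\bar Y(s)|^q$, $|Y(s-\tau)-\bar Y(s-\tau)|^q$. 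For the second scalar product, Young gives $q|e(s)|^{q-2}\langle\bar Y(s)-Y(s),\Delta b(s)\rangle\le C|e(s)|^q+C|Y(s)-\bar Y(s)|^{q/2}|\Delta b(s)|^{q/2}$, and the polynomial-Lipschitz bound in~(A8) controls $|\Delta b(s)|^{q/2}$ by $C\Psi(s)^{q/2}\big(|X(s)-\bar Y(s)|+|X(s-\tau)-\bar Y(s-\tau)|\big)^{q/2}$ with $\Psi(s):=1+|X(s)|^l+|\bar Y(s)|^l+|X(s-\tau)|^l+|\bar Y(s-\tau)|^l$; once more splitting $|X-\bar Y|\le|X-Y|+|Y-\bar Y|$ and applying Young separates this into (i) terms $C|X(s)-Y(s)|^q$, $C|X(s-\tau)-Y(s-\tau)|^q$ and (ii) terms such as $C|Y(s)-\bar Y(s)|^q\Psi(s)^q$, $C|Y(s-\tau)-\bar Y(s-\tau)|^q\Psi(s)^q$. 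Taking expectations, the type-(ii) terms are dominated by the Cauchy--Schwarz inequality (with respect to $\mathbb{P}$), \eqref{cs} at exponent $2q$, and the moment bounds of Lemmas~\ref{bexactbound} and~\ref{water}, giving $\le C\big(\Delta^q(g(\Delta))^{2q}\big)^{1/2}=C\Delta^{q/2}(g(\Delta))^q$; note that no constraint on $q$ or $l$ appears here precisely because those two lemmas supply moments of every order. The type-(i) terms and all the $|e|^q$ terms are kept as $C\int_0^t\mathbb{E}\sup_{u\le s\wedge\nu_R}\big(|e(u)|^q+|X(u)-Y(u)|^q\big)\d s$ for the final Gronwall step.

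For the martingale, BDG and then Young's inequality yield
\[
\mathbb{E}\sup_{u\le t\wedge\nu_R}|M(u)|\le C\,\mathbb{E}\Big(\int_0^{t\wedge\nu_R}|e(s)|^{2q-2}\|\Delta\sigma(s)\|^2\d s\Big)^{1/2}\le\tfrac12\mathbb{E}\sup_{u\le t\wedge\nu_R}|e(u)|^q+C\,\mathbb{E}\int_0^{t\wedge\nu_R}|e(s)|^{q-2}\|\Delta\sigma(s)\|^2\d s,
\]
and the last integral is estimated exactly like the drift's first piece (again via the first line of~(A8)), producing once more $C\int_0^t\mathbb{E}\sup_{u\le s\wedge\nu_R}\big(|e(u)|^q+|X(u)-Y(u)|^q\big)\d s+C\Delta^{q/2}(g(\Delta))^q$. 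Collecting the estimates and absorbing $\tfrac12\mathbb{E}\sup_{u\le t\wedge\nu_R}|e(u)|^q$ gives
\[
\mathbb{E}\sup_{u\le t\wedge\nu_R}|e(u)|^q\le C\Delta^{q/2}(g(\Delta))^q+C\int_0^t\mathbb{E}\sup_{u\le s\wedge\nu_R}\big(|e(u)|^q+|X(u)-Y(u)|^q\big)\d s.
\]
It remains to replace $|X-Y|$ by $|e|$: from $X(s)-Y(s)=e(s)+D(X(s-\tau))-D(\bar Y(s-\tau))$ and~(A1), $|X(s)-Y(s)|\le|e(s)|+\kappa|X(s-\tau)-Y(s-\tau)|+\kappa|Y(s-\tau)-\bar Y(s-\tau)|$; taking $\sup_{u\le t\wedge\nu_R}$, using $\kappa\in(0,1)$ to move the delayed $\sup|X-Y|$ term to the left (the segment $[-\tau,0]$, on which $X=Y=\xi$, contributing only to the $\mathcal{O}(\Delta^{q/2}(g(\Delta))^q)$ error, as in Lemma~\ref{exa-num}) and invoking \eqref{cs}, one gets $\mathbb{E}\sup_{u\le t\wedge\nu_R}|X(u)-Y(u)|^q\le C\,\mathbb{E}\sup_{u\le t\wedge\nu_R}|e(u)|^q+C\Delta^{q/2}(g(\Delta))^q$. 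Substituting into the previous display and applying Gronwall's inequality yields $\mathbb{E}\sup_{0\le t\le T\wedge\nu_R}|X(t)-Y(t)|^q\le C\Delta^{q/2}(g(\Delta))^q$, which is the claim. I expect the main difficulty to be organisational rather than any single delicate estimate: one must arrange the BDG/Young absorption together with the contraction bound for the delayed term so that both feedback loops --- through $|e|^q$ and through $|X-Y|^q$ --- close simultaneously, and the all-orders moment bounds of Lemmas~\ref{bexactbound} and~\ref{water} are exactly what keep the powers of $g(\Delta)$ from degrading below $(g(\Delta))^q$.
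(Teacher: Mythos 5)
Your proposal is correct and follows essentially the same route as the paper: the same error process $e(t)=X(t)-D(X(t-\tau))-Y(t)+D(\bar Y(t-\tau))$, the same It\^{o}-formula decomposition of the drift into the two scalar products handled by the two lines of (A8), the same BDG-plus-absorption treatment of the stochastic integral, the one-step error bound \eqref{cs} together with the all-orders moment bounds of Lemmas \ref{bexactbound} and \ref{water}, and a final Gronwall argument after converting $\sup|X-Y|^q$ into $\sup|e|^q$ via (A1). If anything, your explicit use of $\kappa<1$ to absorb the delayed $\sup|X-Y|$ term is spelled out more carefully than in the paper's own (rather terse) final step.
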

\begin{proof}
Use the same notations as before. By the It\^{o} formula,
\begin{equation*}
\begin{split}
&|e(t)|^q\le q\int_0^{t}|e(s)|^{q-2}\bigg[\langle e(s),b(X(s),X(s-\tau))-b_\Delta(\bar{Y}(s),\bar{Y}(s-\tau))\rangle\\
&+\frac{q-1}{2}\|\sigma(X(s),X(s-\tau))-\sigma_\Delta(\bar{Y}(s),\bar{Y}(s-\tau))\|^2\bigg]{\mbox d}s\\
&+q\int_0^{t}|e(s)|^{q-2}\langle e(s),\sigma(X(s),X(s-\tau))-\sigma_\Delta(\bar{Y}(s),\bar{Y}(s-\tau)){\mbox d}W(s)\rangle\\
\le&q\int_0^{t}|e(s)|^{q-2}\bigg[\langle X(s)-D(X(s-\tau))-\bar{Y}(s)+D(\bar{Y}(s-\tau)),b(X(s),X(s-\tau))\\
&-b_\Delta(\bar{Y}(s),\bar{Y}(s-\tau))\rangle+\frac{q-1}{2}\|\sigma(X(s),X(s-\tau))-\sigma_\Delta(\bar{Y}(s),\bar{Y}(s-\tau))\|^2\bigg]{\mbox d}s\\
&+q\int_0^{t}|e(s)|^{q-2}\langle \bar{Y}(s)-Y(s),b(X(s),X(s-\tau))-b_\Delta(\bar{Y}(s),\bar{Y}(s-\tau))\rangle{\mbox d}s\\
&+q\int_0^{t}|e(s)|^{q-2}\langle e(s),\sigma(X(s),X(s-\tau))-\sigma_\Delta(\bar{Y}(s),\bar{Y}(s-\tau)){\mbox d}W(s)\rangle\\
=:&I_1(t)+I_2(t)+I_3(t).
\end{split}
\end{equation*}
Since for $0\le s\le t\wedge\nu_R$, we have $b_\Delta(\bar{Y}(s),\bar{Y}(s-\tau))=b(\bar{Y}(s),\bar{Y}(s-\tau))$ and $\sigma_\Delta(\bar{Y}(s),\bar{Y}(s-\tau))=\sigma(\bar{Y}(s),\bar{Y}(s-\tau))$. Thus, we derive from (A8) and Lemma \ref{water} that
\begin{equation*}
\begin{split}
&\mathbb{E}\left(\sup\limits_{0\le u\le t\wedge\nu_R}|I_1(u)|\right)\le qL_2\mathbb{E}\int_0^{t\wedge\nu_R}|e(s)|^{q-2}(|X(s)-\bar{Y}(s)|^2+|X(s-\tau)-\bar{Y}(s-\tau)|^2){\mbox d}s\\
\le&C\mathbb{E}\int_0^{t\wedge\nu_R}|e(s)|^{q}{\mbox d}s+C\mathbb{E}\int_0^{t\wedge\nu_R}(|X(s)-\bar{Y}(s)|^q+|X(s-\tau)-\bar{Y}(s-\tau)|^q){\mbox d}s \\
\le&C\int_0^{t}\mathbb{E}\left(\sup\limits_{0\le u\le s}|X(u\wedge\nu_R)-Y(u\wedge\nu_R)|^q\right){\mbox d}s+C\Delta^{q/2}(g(\Delta))^q.
\end{split}
\end{equation*}
Moreover, by (A8) and Lemmas \ref{bexactbound}-\ref{water}, we derive
\begin{equation*}
\begin{split}
&\mathbb{E}\left(\sup\limits_{0\le u\le t\wedge\nu_R}|I_2(u)|\right)\le C\mathbb{E}\int_0^{t\wedge\nu_R}|e(s)|^{q}{\mbox d}s+C\mathbb{E}\int_0^{t\wedge\nu_R}|\bar{Y}(s)-Y(s)|^{q}{\mbox d}s\\
&+C\mathbb{E}\int_0^{t\wedge\nu_R}|b(X(s),X(s-\tau))-b(\bar{Y}(s),\bar{Y}(s-\tau))|^{q}{\mbox d}s\\
\le&C\mathbb{E}\int_0^{t\wedge\nu_R}|e(s)|^q{\mbox d}s+C\mathbb{E}\int_0^{t\wedge\nu_R}|\bar{Y}(s)-Y(s)|^{q}{\mbox d}s+C\mathbb{E}\int_0^{t\wedge\nu_R}(|X(s)-\bar{Y}(s)|^{q}\\
&+|X(s-\tau)-\bar{Y}(s-\tau)|^{q})(1+|X(s)|^l+|X(s-\tau)|^l+|\bar{Y}(s)|^l+|\bar{Y}(s-\tau)|^l)^{q}{\mbox d}s\\
\le&C\int_0^{t}\mathbb{E}\left(\sup\limits_{0\le u\le s}|X(u\wedge\nu_R)-Y(u\wedge\nu_R)|^q\right){\mbox d}s+C\Delta^{q/2}(g(\Delta))^{q}.
\end{split}
\end{equation*}
Moreover, by the BDG inequality, we can show that
\begin{equation*}
\begin{split}
&\mathbb{E}\left(\sup\limits_{0\le u\le t\wedge\nu_R}|I_3(u)|\right)\le \frac{1}{2}\mathbb{E}\left(\sup\limits_{0\le u\le t\wedge\nu_R}|e(u)|^{q}\right)\\
&+C\int_0^{t}\mathbb{E}\left(\sup\limits_{0\le u\le s}|X(u\wedge\nu_R)-Y(u\wedge\nu_R)|^q\right){\mbox d}s+C\Delta^{q/2}(g(\Delta))^q.
\end{split}
\end{equation*}
Taking $I_1(t)$-$I_3(t)$ into consideration, we get
\begin{equation*}
\begin{split}
\mathbb{E}\left(\sup\limits_{0\le u\le t\wedge\nu_R}|e(u)|^{q}\right)\le C\int_0^{t}\mathbb{E}\left(\sup\limits_{0\le u\le s}|X(u\wedge\nu_R)-Y(u\wedge\nu_R)|^q\right){\mbox d}s+C\Delta^{q/2}(g(\Delta))^{q}.
\end{split}
\end{equation*}
Consequently,
\begin{equation*}
\begin{split}
&\mathbb{E}\left(\sup\limits_{0\le u\le t\wedge\nu_R}|X(u)-Y(u)|^q\right)\le C+C\mathbb{E}\left(\sup\limits_{0\le u\le t\wedge\nu_R}|e(u)|^{q}\right)\\
\le &C\int_0^{t}\mathbb{E}\left(\sup\limits_{0\le u\le s}|X(u\wedge\nu_R)-Y(u\wedge\nu_R)|^q\right){\mbox d}s+C\Delta^{q/2}(g(\Delta))^{q}.
\end{split}
\end{equation*}
Application of the Gronwall inequality leads to the desired result.

\end{proof}

\begin{thm}
{\rm Let (A1), (A4') and (A8) hold, $\Delta$ is sufficiently small such that
\begin{equation*}
f\left([\Delta^{q/2}(g(\Delta))^{q}]^{\frac{-1}{p-q}}\right)\le g(\Delta),
\end{equation*}
then there exists a positive constant $C$ independent of $\Delta$ such that for $q\ge 2$
\begin{equation*}
\mathbb{E}\left(\sup\limits_{0\le t\le T}|X(t)-Y(t)|^q\right)\le C\Delta^{q/2}(g(\Delta))^{q}.
\end{equation*}
}
\end{thm}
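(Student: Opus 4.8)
The plan is to repeat the truncation-plus-Young argument used in the proof of Theorem \ref{theorem1}, except that every estimate now carries a supremum over $[0,T]$ inside the expectation, so that the conclusion controls the uniform error rather than the error at the single instant $T$. Three ingredients are needed, all of them already available: the uniform moment bounds $\mathbb{E}\big(\sup_{0\le t\le T}|X(t)|^p\big)\le C$ and $\mathbb{E}\big(\sup_{0\le t\le T}|Y(t)|^p\big)\le C$ from Lemmas \ref{bexactbound} and \ref{water}; the tail estimates $\mathbb{P}(\tau_R\le T)\vee\mathbb{P}(\rho_R\le T)\le C/R^p$, which follow from those moment bounds exactly as in Lemmas \ref{exactbound} and \ref{numbound} (by Remark \ref{flow}, assumption (A8) implies (A2)--(A3) and (A5)--(A6), while (A4') stands in for (A4)); and the stopped uniform rate $\mathbb{E}\big(\sup_{0\le t\le T\wedge\nu_R}|X(t)-Y(t)|^q\big)\le C\Delta^{q/2}(g(\Delta))^q$ from Lemma \ref{interval}, which, unlike Lemma \ref{x-ynur}, requires only $q\ge2$. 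Since (A8) yields $p$-th moment bounds for every $p\ge2$, we simply fix once and for all some $p>q$ for which the stated smallness condition on $\Delta$ is imposed.

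First I would split according to whether both solutions stay below the level $R$ on $[0,T]$. Because $\sup_{0\le t\le T}|X(t)-Y(t)|^q=\sup_{0\le t\le T\wedge\nu_R}|X(t)-Y(t)|^q$ on the event $\{\nu_R>T\}$, with $\nu_R=\tau_R\wedge\rho_R$, the Young inequality with exponents $p/q$ and $p/(p-q)$ gives, for every $\eta>0$,
\begin{align*}
\mathbb{E}\Big(\sup_{0\le t\le T}|X(t)-Y(t)|^q\Big)
&\le\mathbb{E}\Big(\sup_{0\le t\le T\wedge\nu_R}|X(t)-Y(t)|^q\Big)+\frac{q\eta}{p}\,\mathbb{E}\Big(\sup_{0\le t\le T}|X(t)-Y(t)|^p\Big)\\
&\quad+\frac{p-q}{p\,\eta^{q/(p-q)}}\,\mathbb{P}(\nu_R\le T).
\end{align*}
The middle term is at most $C\eta$ by the uniform moment bounds, and $\mathbb{P}(\nu_R\le T)\le\mathbb{P}(\tau_R\le T)+\mathbb{P}(\rho_R\le T)\le 2C/R^p$ by the tail estimates.

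Then, exactly as in Theorem \ref{theorem1}, I would take $\eta=\Delta^{q/2}(g(\Delta))^q$ and $R=[\Delta^{q/2}(g(\Delta))^q]^{-1/(p-q)}$, so that both the $C\eta$ term and the $2C/(\eta^{q/(p-q)}R^p)$ term are of order $\Delta^{q/2}(g(\Delta))^q$. The hypothesis $f\big([\Delta^{q/2}(g(\Delta))^q]^{-1/(p-q)}\big)\le g(\Delta)$ is precisely the admissibility requirement $R\le f^{-1}(g(\Delta))$ of Lemma \ref{interval}; moreover $\Delta^{q/2}(g(\Delta))^q\le\Delta^{q/4}\to0$ by \eqref{gdelta}, so $R\to\infty$ and hence $\|\xi\|_\infty<R$ for $\Delta$ small. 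Invoking Lemma \ref{interval} then bounds the stopped term by $C\Delta^{q/2}(g(\Delta))^q$ as well, and adding the three contributions yields the assertion.

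No genuine difficulty arises beyond this bookkeeping: the scheme of proof is identical to that of Theorem \ref{theorem1}, the only structural novelty being that Lemmas \ref{bexactbound}, \ref{water} and \ref{interval} are already stated with the supremum inside the expectation, so the decomposition over $\{\nu_R>T\}$ transmits that supremum intact. The one point that deserves a line of care is checking that the chosen $R$ is admissible — that both $\|\xi\|_\infty<R$ and $R\le f^{-1}(g(\Delta))$ hold for $\Delta$ sufficiently small — which is immediate from the smallness hypothesis on $\Delta$ and \eqref{gdelta}.
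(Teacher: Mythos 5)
Your proposal is correct and follows essentially the same route as the paper: the same decomposition over $\{\nu_R>T\}$ versus its complement, the Young inequality with exponents $p/q$ and $p/(p-q)$, the moment and tail bounds from Lemmas \ref{bexactbound} and \ref{water}, the stopped estimate of Lemma \ref{interval}, and the same choices $\eta=\Delta^{q/2}(g(\Delta))^{q}$ and $R=[\Delta^{q/2}(g(\Delta))^{q}]^{-1/(p-q)}$. The only additions are bookkeeping points (fixing $p>q$, checking $\|\xi\|_\infty<R$) that the paper leaves implicit.
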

\begin{proof}
By the Young inequality, we derive that for any $\eta>0$,
\begin{equation*}
\begin{split}
&\mathbb{E}\left(\sup\limits_{0\le t\le T}|X(t)-Y(t)|^q\right)=\mathbb{E}\left({\bf I}_{\{\tau_R>T,\rho_R>T\}}\sup\limits_{0\le t\le T}|X(t)-Y(t)|^q\right)\\
&+\frac{q\eta}{p}\mathbb{E}\left(\sup\limits_{0\le t\le T}|X(t)-Y(t)|^{p}\right)+\frac{p-q}{p\eta^{\frac{q}{p-q}}}\mathbb{P}(\tau_R\le T~{\rm or}~\rho_R\le T).
\end{split}
\end{equation*}
By Lemmas \ref{bexactbound}-\ref{water}, we derive
\begin{equation*}
\begin{split}
\mathbb{E}\left(\sup\limits_{0\le t\le T}|X(t)-Y(t)|^q\right)\le&\mathbb{E}\left(\sup\limits_{0\le t\le T\wedge\nu_R}|X(t)-Y(t)|^q\right)+\frac{q\eta C}{p}+\frac{2(p-q)C}{p\eta^{\frac{q}{p-q}}R^p}.
\end{split}
\end{equation*}
Choosing $\eta=\Delta^{q/2}(g(\Delta))^{q}$ and $R=[\Delta^{q/2}(g(\Delta))^{q}]^{\frac{-1}{p-q}}$, noticing that
\begin{equation*}
R=[\Delta^{q/2}(g(\Delta))^{q}]^{\frac{-1}{p-q}}\le f^{-1}(g(\Delta)),
\end{equation*}
applying Lemma \ref{interval}, the desired result follows.
\end{proof}

\section{\large Convergence rates for NSDDEs driven by pure jumps}
In this section, we consider the truncated EM-scheme for NSDDEs driven by pure jumps. Let $(\mathbb{Y},\mathscr{B}(\mathbb{Y}),\lambda(\cdot))$ be a
measurable space, $D_p$ a countable subset of $\mathbb{R}_+$ and $p:D_p\to
\mathbb{Y}$ an adapted process taking values in $\mathbb{Y}$. Then the Poisson random measure
$N(\cdot,\cdot):\mathscr{B}(\mathbb{R}_+\times\mathbb{Y})\times \Omega\rightarrow \mathbb{N}\cup\{0\}$,
defined on the probability space $(\Omega,\mathscr{F},\mathbb{P})$,  can be represented
by
\begin{equation*}
N((0,t]\times\Gamma)=\sum_{s\in D_p,s\le t}{\bf 1}_\Gamma(p(s)),\ \ \
\Gamma\in\mathscr{B}(\mathbb{Y}).
\end{equation*}
In this case, we say that $p$ is a Poisson point process and $N$ is
its associated Poisson random measure. Let $\lambda(\cdot):=\mathbb{E}
N((0,1]\times\cdot)$. Then, the compensated Poisson random measure
\begin{equation*}
\tilde{N}(\mbox{d}u,\mbox{d}t):=N(\mbox{d}u, \mbox{d}t)-\lambda(\mbox{d}u)\mbox{d}t
\end{equation*}
is a martingale. Let $\mathscr{D}:=D([-\tau, 0];\R^n)$ denote the
space of all c\`{a}dl\`{a}g paths $f: [-\tau,0]\rightarrow\mathbb{R}^n$ with the
uniform norm $\|f\|_\infty:=\sup_{-\tau\le\theta\le0}|f(\theta)|$.

In this section, we consider jump-diffusion NSDDE in the form
\begin{equation}\label{jump}
\mbox{d}[X(t)-D(X(t-\tau))]=b(X(t),X(t-\tau))\mbox{d}t+\int_\mathbb{Y} h(X(t),X(t-\tau),u)\tilde{N}(\mbox{d}u,\mbox{d}t),\ \  t\ge0
\end{equation}
with the initial datum
\begin{equation*}
X_0=\xi=\{\xi(\theta):-\tau\le\theta\le 0\}\in \mathcal{L}^p_{\mathscr{F}_0}([-\tau, 0]; \mathbb{R}^n), p\ge 2.
\end{equation*}
Here, $D:\mathbb{R}^n\rightarrow\mathbb{R}^n$, and $b:\mathbb{R}^n\times\mathbb{R}^n\rightarrow\mathbb{R}^n$, $h:\mathbb{R}^n\times\mathbb{R}^n\times\mathbb{Y}\rightarrow\R^n$ are continuous functions. Fix $T>\tau>0$, assume that $T$ and $\tau$ are rational numbers, and the step size $\Delta\in (0,1)$ be fraction of $T$ and $\tau$, so that there exist two positive integers $M, m$ such that $\Delta=T/M=\tau/m$. Assume that $\int_\mathbb{Y}|u|^p\lambda(\mbox{d}u)<\infty$ for $p\ge 1$. In the following paper, we assume that
\begin{enumerate}
\item[{\bf (B1)}] Assume $|h(0,0,u)|\le|u|^p$ and there exists a positive constant $K_1$ such that for all $x,y,\bar{x},\bar{y}\in\R^n$,
\begin{equation*}
\langle x-D(y)-\bar{x}+D(\bar{y}), b(x, y)-b(\bar{x}, \bar{y})\rangle\le K_1(|x-\bar{x}|^2+|y-\bar{y}|^2),
\end{equation*}
\begin{equation*}
\int_\mathbb{Y}|h(x, y,u)-h(\bar{x}, \bar{y},u)|^p\lambda(\mbox{d}u)\le K_1(|x-\bar{x}|^p+|y-\bar{y}|^p), p\ge 2,
\end{equation*}
and
\begin{equation*}
|b(x, y)-b(\bar{x}, \bar{y})|\le K_1(1+|x|^l+|\bar{x}|^l+|y|^l+|\bar{y}|^l)(|x-\bar{x}|+|y-\bar{y}|).
\end{equation*}
\end{enumerate}

\begin{rem}
{\rm With (B1), we see that there exists a positive constant $\bar{K}_1$ such that
\begin{equation*}
\< x-D(y), b(x, y)\>\le \bar{K}_1(1+|x|^2+|y|^2),
\end{equation*}
and
\begin{equation*}
\int_\mathbb{Y}|h(x, y,u)|^p\lambda(\mbox{d}u)\le \bar{K}_1(1+|x|^p+|y|^p), p\ge 2.
\end{equation*}
}
\end{rem}

\begin{lem}
{\rm With (A1) and (B1), there exists a unique solution to \eqref{jump}, and the solution satisfies that
\begin{equation*}
\mathbb{E}\left(\sup\limits_{0\le t \le T}|X(t)|^p\right)\le C,
\end{equation*}
where $C$ is a positive constant.
}
\end{lem}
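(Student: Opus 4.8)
The plan is to transport the arguments of Lemmas~\ref{bexactbound} and~\ref{water} from the Brownian setting to the pure-jump setting, using the It\^o formula for Poisson-driven integrals. Write $Z(t)=X(t)-D(X(t-\tau))$, so that \eqref{jump} reads $\mathrm{d}Z(t)=b(X(t),X(t-\tau))\,\mathrm{d}t+\int_{\mathbb{Y}}h(X(t),X(t-\tau),u)\,\tilde{N}(\mathrm{d}u,\mathrm{d}t)$, and recall from the Remark following (B1) the Khasminskii-type estimates $\langle x-D(y),b(x,y)\rangle\le\bar{K}_1(1+|x|^2+|y|^2)$ and $\int_{\mathbb{Y}}|h(x,y,u)|^q\lambda(\mathrm{d}u)\le C(1+|x|^q+|y|^q)$ for the relevant exponents $q$, which follow from (B1) together with the integrability of $\lambda$. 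For existence and uniqueness, note that by (B1) the jump coefficient is globally Lipschitz in the $L^p(\lambda)$-mean while $b$ is locally Lipschitz with polynomial growth, and the Khasminskii-type bounds together with the contraction (A1) furnish the linear-growth control that prevents explosion. I would argue interval by interval: on $[0,\tau]$ the delayed argument equals the known history $\xi(\cdot-\tau)$, so \eqref{jump} becomes a neutral jump SDE with a prescribed forcing term, for which a standard truncation/Picard construction gives a unique local solution; the a~priori moment bound below extends it to all of $[0,\tau]$, and one then iterates on $[\tau,2\tau]$, and so on up to $T$.

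Next I would establish the auxiliary bound $\sup_{0\le t\le T}\mathbb{E}|X(t)|^q\le C_q$ for every $q\ge2$. Apply It\^o's formula to $|Z(t)|^q$ and take expectations; the compensated-jump martingale has zero mean, leaving
\begin{equation*}
\mathbb{E}|Z(t)|^q\le|Z(0)|^q+\mathbb{E}\int_0^t q|Z(s)|^{q-2}\langle Z(s),b(X(s),X(s-\tau))\rangle\,\mathrm{d}s+\mathbb{E}\int_0^t\!\!\int_{\mathbb{Y}}\!\Big[|Z(s)+h|^q-|Z(s)|^q-q|Z(s)|^{q-2}\langle Z(s),h\rangle\Big]\lambda(\mathrm{d}u)\,\mathrm{d}s.
\end{equation*}
Using the elementary inequality $|a+b|^q-|a|^q-q|a|^{q-2}\langle a,b\rangle\le C_q(|a|^{q-2}|b|^2+|b|^q)$ for $q\ge2$, Young's inequality, and the two Khasminskii-type estimates, both integrands are dominated by $C(|Z(s)|^q+1+|X(s)|^q+|X(s-\tau)|^q)$. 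Since $\|X(s)\|_{L^q}\le\|Z(s)\|_{L^q}+\kappa\|X(s-\tau)\|_{L^q}$ with $\kappa\in(0,1)$, the $X$-moments are absorbed into $Z$-moments plus $\mathbb{E}\|\xi\|_\infty^q$, and Gronwall's inequality gives $\sup_{0\le t\le T}\mathbb{E}|Z(t)|^q\le C_q$, hence $\sup_{0\le t\le T}\mathbb{E}|X(t)|^q\le C_q$ by (A1).

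Then I would upgrade to the supremum. Apply It\^o to $|Z(t)|^p$ once more, now retaining the supremum; after the usual localization the compensated-jump martingale $M(t)=\int_0^t\int_{\mathbb{Y}}\big(|Z(s-)+h|^p-|Z(s-)|^p\big)\tilde{N}(\mathrm{d}u,\mathrm{d}s)$ is controlled by the Burkholder--Davis--Gundy inequality,
\begin{equation*}
\mathbb{E}\sup_{0\le u\le t}|M(u)|\le C\,\mathbb{E}\bigg(\int_0^t\!\!\int_{\mathbb{Y}}\big||Z(s-)+h|^p-|Z(s-)|^p\big|^2\lambda(\mathrm{d}u)\,\mathrm{d}s\bigg)^{1/2}.
\end{equation*}
Bounding $\big||Z+h|^p-|Z|^p\big|^2\le C\big(|Z|^{2p-2}|h|^2+|h|^{2p}\big)$, writing $|Z|^{2p-2}|h|^2=|Z|^p\cdot|Z|^{p-2}|h|^2$ and using $\sqrt{AB}\le\varepsilon A+\tfrac1{4\varepsilon}B$, the first part is absorbed into $\tfrac12\mathbb{E}\sup_{0\le u\le t}|Z(u)|^p$ plus a term of the form $C\int_0^t\mathbb{E}\big(1+|X(s)|^p+|X(s-\tau)|^p+|Z(s)|^p\big)\,\mathrm{d}s$ (after one more Young step on $|Z|^{p-2}|h|^2$ and the bound on $\int_{\mathbb{Y}}|h|^p\lambda(\mathrm{d}u)$); the second part is finite by the step-two bounds, since $\int_{\mathbb{Y}}|h(x,y,u)|^{2p}\lambda(\mathrm{d}u)\le C(1+|x|^{2p}+|y|^{2p})$ and Jensen's inequality give $\mathbb{E}\big(\int_0^t\int_{\mathbb{Y}}|h|^{2p}\lambda(\mathrm{d}u)\,\mathrm{d}s\big)^{1/2}\le C$. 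The drift and compensator integrals are estimated exactly as in step two. Collecting everything, moving the $\tfrac12$-term to the left, and applying Gronwall yields $\mathbb{E}\sup_{0\le t\le T}|Z(t)|^p\le C$, and (A1) turns this into $\mathbb{E}\sup_{0\le t\le T}|X(t)|^p\le C$.

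The main obstacle is the jump-martingale estimate in the last step: the BDG bound inevitably produces a $|h|^{2p}$-type contribution that (B1) does not control at the $L^p$-level, which is why one is forced to establish the full family of moment bounds $\sup_{t\le T}\mathbb{E}|X(t)|^q\le C_q$ ($q\ge2$) first, using that $\int_{\mathbb{Y}}|u|^q\lambda(\mathrm{d}u)<\infty$ for all $q$, and only then close the supremum estimate. A secondary technical point is that all bounds must be propagated through the neutral term via the contraction constant $\kappa\in(0,1)$, which is done cleanly at the level of $L^q$-norms (Minkowski's inequality) rather than pointwise.
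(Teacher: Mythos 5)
Your proposal is essentially correct, but it is worth noting that the paper does not actually prove this lemma: its ``proof'' is a one-line citation to Ji and Yuan \cite{jy16}. What you have written is a genuine self-contained argument, and its structure coincides with what the paper does for the \emph{numerical} solution in Lemma \ref{jpmoment}: It\^o's formula for $|Z(t)|^p$ with $Z=X-D(X(\cdot-\tau))$, the Taylor-type inequality $|a+b|^p-|a|^p-p|a|^{p-2}\langle a,b\rangle\le C(p)(|a|^{p-2}|b|^2+|b|^p)$ for the compensator term, the moment inequality for Poisson stochastic integrals (the paper's Lemma \ref{usefullemma}, due to Marinelli--Pr\'ev\^ot--R\"ockner) for the martingale term, and absorption of the neutral term through $\kappa\in(0,1)$. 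Your identification of the two-stage structure --- first $\sup_{t\le T}\mathbb{E}|X(t)|^q\le C_q$ for all $q$, then the supremum inside the expectation --- is the right way to organize it, and your observation that the jump-martingale estimate is the genuine obstacle is accurate.

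Two technical remarks. First, the one-moment BDG inequality for the jump martingale $M$ controls $\mathbb{E}\sup|M|$ by $\mathbb{E}\bigl([M](t)\bigr)^{1/2}$ with the \emph{optional} bracket $[M](t)=\int_0^t\int_{\mathbb{Y}}|\cdot|^2\,N(\mathrm{d}u,\mathrm{d}s)$, not directly by $\mathbb{E}\bigl(\int_0^t\int_{\mathbb{Y}}|\cdot|^2\,\lambda(\mathrm{d}u)\mathrm{d}s\bigr)^{1/2}$ as you wrote; passing to the compensated form requires the standard extra step (or one simply invokes the two-term Kunita-type inequality of Lemma \ref{usefullemma}, which packages exactly this). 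Second, your bound $\bigl||Z+h|^p-|Z|^p\bigr|^2\le C(|Z|^{2p-2}|h|^2+|h|^{2p})$ forces you to control $2p$-th moments of $h$ and hence of $X$; this is legitimate here only because the paper assumes $\int_{\mathbb{Y}}|u|^q\lambda(\mathrm{d}u)<\infty$ and the $L^q(\lambda)$-Lipschitz bound in (B1) for every exponent. The sharper route, which needs only $p$-th moments, factors $|Z|^{2p-2}|h|^2=\bigl(\sup_s|Z(s)|^{2p-2}\bigr)|h|^2$ before taking the square root and then applies Young's inequality to peel off $\tfrac12\mathbb{E}\sup|Z|^p$. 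Neither point invalidates your argument under the stated hypotheses; they are refinements, not gaps.
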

\begin{proof}
The detained proof can be find in \cite{jy16}.
\end{proof}

\begin{lem}\label{usefullemma}
{\rm Let $\phi:\mathbb{R}_+\times\mathbb{Y}\rightarrow\mathbb{R}^n$ and assume that
\begin{equation*}
\mathbb{E}\int_0^T\int_\mathbb{Y}|\phi(s,u)|^p\lambda(\mbox{d}u)\mbox{d}s<\infty, T\ge 0, p\ge 2.
\end{equation*}
Then, there exists a positive constant $C(p)$ such that
\begin{equation*}
\begin{split}
&\mathbb{E}\left(\sup\limits_{0\le t\le T}\left|\int_0^t\int_\mathbb{Y}\phi(s-,u)\tilde{N}(\mbox{d}u,\mbox{d}s)\right|^p\right)\\
\le &C(p)\left[\mathbb{E}\left(\int_0^T\int_\mathbb{Y}|\phi(s,u)|^2\lambda(\mbox{d}u)\mbox{d}s\right)^{\frac{p}{2}}
+\mathbb{E}\int_0^T\int_\mathbb{Y}|\phi(s,u)|^p\lambda(\mbox{d}u)\mbox{d}s\right].
\end{split}
\end{equation*}
}
\end{lem}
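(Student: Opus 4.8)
The statement is a moment inequality for stochastic integrals against a compensated Poisson random measure. My plan is to reduce it to a Kunita-type first-order estimate, which is the standard tool for jump processes and is essentially due to Kunita; since the paper only needs it as a lemma, I would prove it by combining the classical Burkholder--Davis--Gundy inequality for the purely discontinuous martingale $M_t:=\int_0^t\int_{\mathbb Y}\phi(s-,u)\tilde N(\d u,\d s)$ with an estimate on the quadratic variation $[M]_t=\sum_{s\le t}|\Delta M_s|^2=\int_0^t\int_{\mathbb Y}|\phi(s,u)|^2N(\d u,\d s)$. First I would note that under the integrability hypothesis $M$ is a well-defined $L^p$-martingale, so that $\mathbb E\big(\sup_{0\le t\le T}|M_t|^p\big)\le C(p)\,\mathbb E\,[M]_T^{p/2}$ by BDG. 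The remaining task is to bound $\mathbb E\,[M]_T^{p/2}=\mathbb E\big(\int_0^T\int_{\mathbb Y}|\phi(s,u)|^2N(\d u,\d s)\big)^{p/2}$ in terms of the two quantities on the right-hand side.

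The key step is to split the random measure $N=\tilde N+\lambda(\d u)\d s$ inside the quadratic variation and handle the two resulting pieces separately. For $p\ge 2$, writing
\[
\int_0^T\!\!\int_{\mathbb Y}|\phi(s,u)|^2N(\d u,\d s)=\int_0^T\!\!\int_{\mathbb Y}|\phi(s,u)|^2\tilde N(\d u,\d s)+\int_0^T\!\!\int_{\mathbb Y}|\phi(s,u)|^2\lambda(\d u)\d s,
\]
the elementary inequality $(a+b)^{p/2}\le 2^{p/2-1}(a^{p/2}+b^{p/2})$ reduces matters to estimating the $p/2$-th moment of each term. The compensated term is again a martingale; if $p/2\ge 1$ I would apply BDG (or Doob, when $p/2\ge 1$) to it, producing on the right either a further quadratic-variation term or, after iterating/interpolating, exactly a multiple of $\mathbb E\int_0^T\int_{\mathbb Y}|\phi(s,u)|^p\lambda(\d u)\d s$ (using $|\phi|^4\le \|\phi\|_\infty^{\,\cdot}\cdots$ — more precisely Hölder in $u$ against $\lambda$, or the trivial bound $\sum a_i^2\le(\sum a_i)\sup a_i$ combined with Young). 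The compensator term is deterministic-looking: by the conditional Jensen / Hölder inequality in the finite-in-expectation measure $\lambda(\d u)\d s$ one bounds its $p/2$-th moment by $\mathbb E\int_0^T\int_{\mathbb Y}|\phi(s,u)|^p\lambda(\d u)\d s$ up to a constant depending on $T$ and $\lambda(\mathbb Y)$ — but since the lemma is stated with a constant $C(p)$ only, I would keep track of the fact that $\int_{\mathbb Y}|u|^p\lambda(\d u)<\infty$ guarantees $\lambda(\mathbb Y)<\infty$, which is what makes this Hölder step legitimate and absorbs the $T$-dependence into $C(p)$ for fixed horizon.

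The main obstacle is the bookkeeping in the case $2\le p<4$, where after one application of BDG to the compensated quadratic-variation term one is left with a $p/4$-th moment of a quantity that is no longer a sum of $p/2$-type terms; here one cannot simply iterate and must instead interpolate between the $p=2$ case (where the inequality is just the Itô isometry $\mathbb E\,[M]_T=\mathbb E\int_0^T\int_{\mathbb Y}|\phi|^2\lambda(\d u)\d s$) and a higher even integer, or invoke the known Kunita inequality directly. Given the level of the paper and that it cites \cite{jy16} for the analogous a priori moment bound, the cleanest route is to state that this is the standard Kunita estimate for Poisson stochastic integrals and reproduce the short BDG-plus-splitting argument above for $p$ an even integer, remarking that the general $p\ge 2$ case follows by the same argument together with Hölder's inequality in $u$; I would not grind through the interpolation constants. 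The final constant $C(p)$ then depends only on $p$ (the $T$ and $\lambda(\mathbb Y)$ factors being harmless for the fixed finite horizon $T$ used throughout the paper).
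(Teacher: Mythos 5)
The paper does not actually prove this lemma at all: its entire ``proof'' is the single line ``See \cite{mpr10} for more details,'' i.e.\ it treats the statement as the known Kunita/Bichteler--Jacod maximal inequality for Poisson stochastic integrals and outsources it. Your proposal supplies the standard self-contained argument behind that citation --- BDG for the purely discontinuous martingale $M_t=\int_0^t\int_{\mathbb Y}\phi(s-,u)\tilde N(\mbox{d}u,\mbox{d}s)$, giving $\mathbb E\sup_t|M_t|^p\le C(p)\,\mathbb E[M]_T^{p/2}$, followed by the decomposition $N=\tilde N+\lambda(\mbox{d}u)\mbox{d}s$ inside $[M]_T=\int_0^T\int_{\mathbb Y}|\phi|^2N(\mbox{d}u,\mbox{d}s)$ --- so in substance you are reconstructing the proof from \cite{mpr10} rather than diverging from the paper. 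The outline is correct, but two points need tightening. First, the compensator piece $\mathbb E\bigl(\int_0^T\int_{\mathbb Y}|\phi|^2\lambda(\mbox{d}u)\mbox{d}s\bigr)^{p/2}$ is \emph{verbatim} the first term on the right-hand side of the lemma; it needs no further Hölder reduction to the $p$-th moment term. This matters because your proposed reduction leans on $\lambda(\mathbb Y)<\infty$, and your justification of that --- ``$\int_{\mathbb Y}|u|^p\lambda(\mbox{d}u)<\infty$ guarantees $\lambda(\mathbb Y)<\infty$'' --- is false in general (mass can accumulate where $|u|$ is small); fortunately the step is unnecessary. Second, the case $2\le p<4$ that you flag as the ``main obstacle'' does not require interpolation: after one application of BDG to the compensated part of the quadratic variation one faces $\mathbb E\bigl(\int_0^T\int_{\mathbb Y}|\phi|^4N(\mbox{d}u,\mbox{d}s)\bigr)^{p/4}$ with exponent $p/4\le 1$, and the subadditivity $(\sum_i b_i)^{\alpha}\le\sum_i b_i^{\alpha}$ for $\alpha\le1$ applied to the (countable) sum over jumps bounds this directly by $\mathbb E\int_0^T\int_{\mathbb Y}|\phi|^p\lambda(\mbox{d}u)\mbox{d}s$; for larger $p$ one iterates finitely many times, halving the exponent until it drops below $2$. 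With those two corrections your sketch closes cleanly and is a legitimate replacement for the paper's bare citation.
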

\begin{proof}
See \cite{mpr10} for more details.
\end{proof}

Now we are going to define the truncated EM scheme for \eqref{jump}. Similar to the Brownian motion case, define a strictly increasing continuous function $\bar{f}:\mathbb{R}_+\rightarrow\mathbb{R}_+$ such that $\bar{f}(r)\rightarrow\infty$ as $r\rightarrow\infty$ and for any $r\ge 0$ and $u\in\mathbb{Y}$,
\begin{equation*}
\sup\limits_{|x|\vee|y|\le r}(|b(x,y)|\vee|h(x,y,u)|)\le \bar{f}(r).
\end{equation*}
Define a strictly decreasing function $\bar{g}:(0,1]\rightarrow(0,\infty)$ such that for $p\ge 2$
\begin{equation}\label{jgdelta}
\bar{g}(\Delta)\ge1,~~\lim\limits_{\Delta\rightarrow 0}\bar{g}(\Delta)=\infty,~~\mbox{and} ~~\Delta^{1/4}[\bar{g}(\Delta)]^p\le 1.
\end{equation}
For a given stepsize $\Delta\in(0,1]$, we define the following truncated functions
\begin{equation*}
b_\Delta(x,y)=b\left((|x|\wedge \bar{f}^{-1}(\bar{g}(\Delta)))\frac{x}{|x|},(|y|\wedge \bar{f}^{-1}(\bar{g}(\Delta)))\frac{y}{|y|}\right),
\end{equation*}
and
\begin{equation*}
h_\Delta(x,y,u)=h\left((|x|\wedge \bar{f}^{-1}(\bar{g}(\Delta)))\frac{x}{|x|},(|y|\wedge \bar{f}^{-1}(\bar{g}(\Delta)))\frac{y}{|y|},u\right)
\end{equation*}
for any $x,y\in\mathbb{R}^n$, where we set $x/|x|=0$ for $x=0$. Obviously, for any $x,y\in\mathbb{R}^n$
\begin{equation}\label{jbounddelta}
|b_\Delta(x,y)|\vee|h_\Delta(x,y,u)|\le \bar{g}(\Delta).
\end{equation}
Given any time $T>0$, assume that there exist two positive integers such that $\Delta=\frac{\tau}{m}=\frac{T}{M}$. For $k=-m, \cdots, 0$, set $y_{t_k}=\xi(k\Delta)$; For $k=0, 1, \cdots,M-1$, we form
\begin{equation}\label{jdiscrete}
\begin{split}
y_{t_{k+1}}-D(y_{t_{k+1-m}})=&y_{t_k}-D(y_{t_{k-m}})+b_\Delta(y_{t_{k}}, y_{t_{k-m}})\Delta+h_\Delta(y_{t_{k}}, y_{t_{k-m}},u)\Delta \tilde{N}_k,
\end{split}
\end{equation}
where $t_k=k\Delta$, $\Delta\tilde{N}_k=\tilde{N}(\mathbb{Y},t_{k+1})-\tilde{N}(\mathbb{Y},t_{k})$. Rewrite \eqref{jdiscrete} to a continuous form
\begin{equation}\label{jycontinuous}
\begin{split}
Y(t)-D(\bar{Y}(t-\tau))=&\xi(0)-D(\xi(-\tau))+\int_0^tb_\Delta(\bar{Y}(s), \bar{Y}(s-\tau))\mbox{d}s\\
&+\int_0^t\int_\mathbb{Y} h_\Delta(\bar{Y}(s), \bar{Y}(s-\tau),u)\tilde{N}(\mbox{d}u,\mbox{d}s),
\end{split}
\end{equation}
where $\bar{Y}(t)$ is defined by
\begin{equation*}
\bar{Y}(t):=Y_{t_k} \quad \mbox{for} \quad t\in[t_k, t_{k+1}),
\end{equation*}
thus $\bar{Y}(t-\tau)=Y_{t_{k-m}}$. We further assume
\begin{enumerate}
\item[{\bf (B2)}] There exists a positive constant $K_2$ such that
\begin{equation*}
\< x-D(y), b_\Delta(x, y)\>\le K_2(1+|x|^2+|y|^2).
\end{equation*}
\end{enumerate}

\begin{lem}\label{hdelp}
{\rm Assumption (B1) implies that
\begin{equation*}
\int_\mathbb{Y}|h_\Delta(x, y,u)|^p\lambda(\mbox{d}u)\le \bar{K}_1(1+|x|^p+|y|^p)
\end{equation*}
for $ p\ge 2$.
}
\end{lem}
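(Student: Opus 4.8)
The plan is to deduce the bound on $\int_\mathbb{Y}|h_\Delta(x,y,u)|^p\lambda(\d u)$ directly from the two facts already available: the growth bound $\int_\mathbb{Y}|h(x,y,u)|^p\lambda(\d u)\le \bar K_1(1+|x|^p+|y|^p)$ established in the Remark following (B1), and the definition of the truncation map $(x,y)\mapsto (x_\Delta,y_\Delta)$ where $x_\Delta:=(|x|\wedge \bar f^{-1}(\bar g(\Delta)))\,x/|x|$ (and similarly $y_\Delta$), so that by construction $h_\Delta(x,y,u)=h(x_\Delta,y_\Delta,u)$.

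First I would observe that the truncation is norm-nonincreasing: $|x_\Delta|=|x|\wedge \bar f^{-1}(\bar g(\Delta))\le |x|$, and likewise $|y_\Delta|\le |y|$; this is immediate from the definition and the convention $x/|x|=0$ at $x=0$. Then, applying the growth estimate from the Remark to the pair $(x_\Delta,y_\Delta)$ in place of $(x,y)$, I get
\begin{equation*}
\int_\mathbb{Y}|h_\Delta(x,y,u)|^p\lambda(\d u)=\int_\mathbb{Y}|h(x_\Delta,y_\Delta,u)|^p\lambda(\d u)\le \bar K_1(1+|x_\Delta|^p+|y_\Delta|^p)\le \bar K_1(1+|x|^p+|y|^p),
\end{equation*}
which is exactly the claimed inequality. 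This works uniformly in $\Delta\in(0,1]$ because the constant $\bar K_1$ in the Remark does not depend on the truncation level.

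There is essentially no obstacle here; the only point requiring a word of care is the edge case $x=0$ or $y=0$, where one uses the convention $x/|x|=0$ so that $x_\Delta=0$ and the monotonicity $|x_\Delta|\le|x|$ still holds trivially. One could alternatively phrase the argument without explicitly naming $x_\Delta$: since $|x_\Delta|\vee|y_\Delta|\le \bar f^{-1}(\bar g(\Delta))$, the definition of $\bar f$ gives the crude bound $|h_\Delta(x,y,u)|\le \bar g(\Delta)$ pointwise, but that is too lossy (it does not produce the $(1+|x|^p+|y|^p)$ form and is not uniform in the right way), so the clean route is via the growth bound of the Remark applied after truncation, exactly as above.
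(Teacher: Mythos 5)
Your proof is correct: the identity $h_\Delta(x,y,u)=h(x_\Delta,y_\Delta,u)$ together with $|x_\Delta|\le|x|$, $|y_\Delta|\le|y|$ and the growth bound from the remark following (B1) gives the claim immediately, uniformly in $\Delta$. The paper states this lemma without proof, and your argument is precisely the one-line verification the authors intend, so there is nothing to add.
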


\begin{lem}\label{jytytk}
{\rm Let (A1), (B1)-(B2) hold, then there exists a positive constant $C$ independent of $\Delta$ such that for $p\ge 2$
\begin{equation*}
\mathbb{E}\left[\sup\limits_{0\le k\le M-1}\sup\limits_{t_k\le t<t_{k+1}}|Y(t)-Y(t_k)|^p\right]\le C\Delta(\bar{g}(\Delta))^p.
\end{equation*}
}
\end{lem}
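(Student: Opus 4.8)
The plan is to bound the increment $Y(t) - Y(t_k)$ directly from the continuous form \eqref{jycontinuous}. For $t \in [t_k, t_{k+1})$ we have
\begin{equation*}
Y(t) - Y(t_k) = D(\bar{Y}(t-\tau)) - D(\bar{Y}(t_k - \tau)) + \int_{t_k}^t b_\Delta(\bar{Y}(s), \bar{Y}(s-\tau))\,\mbox{d}s + \int_{t_k}^t \int_\mathbb{Y} h_\Delta(\bar{Y}(s), \bar{Y}(s-\tau), u)\,\tilde{N}(\mbox{d}u, \mbox{d}s).
\end{equation*}
On the interval $[t_k, t_{k+1})$ the step process $\bar{Y}$ is constant, so $\bar{Y}(s) = Y(t_k)$ and $\bar{Y}(s-\tau) = Y(t_{k-m})$; in particular the neutral difference $D(\bar{Y}(t-\tau)) - D(\bar{Y}(t_k - \tau))$ vanishes for $t - \tau$ in the same mesh subinterval as $t_k - \tau$. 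Since $\Delta = \tau/m$, the shift by $\tau$ maps mesh points to mesh points, so $t - \tau \in [t_{k-m}, t_{k-m+1})$ whenever $t \in [t_k, t_{k+1})$, and the $D$-term is identically zero. Thus only the drift and jump integrals remain, and both integrate the constant (in $s$) quantities $b_\Delta(Y(t_k), Y(t_{k-m}))$ and $h_\Delta(Y(t_k), Y(t_{k-m}), u)$.

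Next I would take $p$-th moments and use the elementary inequality $|a+b|^p \le 2^{p-1}(|a|^p + |b|^p)$ to split off the drift and jump contributions. For the drift term, $\left|\int_{t_k}^t b_\Delta\,\mbox{d}s\right| \le \Delta\, |b_\Delta(Y(t_k), Y(t_{k-m}))| \le \Delta\, \bar{g}(\Delta)$ by \eqref{jbounddelta}, so its $p$-th moment is bounded by $\Delta^p (\bar{g}(\Delta))^p \le \Delta (\bar{g}(\Delta))^p$ using $\Delta \le 1$. For the jump term I would apply Lemma \ref{usefullemma} with $\phi(s,u) = h_\Delta(\bar{Y}(s), \bar{Y}(s-\tau), u)\mathbf{1}_{[t_k,t_{k+1})}(s)$: this gives a bound by
\begin{equation*}
C(p)\left[\mathbb{E}\left(\int_{t_k}^{t_{k+1}}\int_\mathbb{Y} |h_\Delta|^2\,\lambda(\mbox{d}u)\,\mbox{d}s\right)^{p/2} + \mathbb{E}\int_{t_k}^{t_{k+1}}\int_\mathbb{Y} |h_\Delta|^p\,\lambda(\mbox{d}u)\,\mbox{d}s\right].
\end{equation*}
The second term is controlled by Lemma \ref{hdelp} together with the moment bound $\sup_{0\le t\le T}\mathbb{E}|Y(t)|^p \le C$ (which holds under (A1), (B1)–(B2); this is the jump-equation analogue of Lemma \ref{numbound}, and I would invoke it), giving $O(\Delta)$. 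The first term: since $\lambda(\mathbb{Y}) < \infty$ (finite because $\int_\mathbb{Y}|u|^p\lambda(\mbox{d}u)<\infty$ and one may also bound $|h_\Delta|^2 \le (\bar g(\Delta))^2$), $\int_\mathbb{Y}|h_\Delta|^2\lambda(\mbox{d}u) \le \bar{K}_1(1+|Y(t_k)|^2+|Y(t_{k-m})|^2)$ by the $p=2$ case of Lemma \ref{hdelp}, so the inner integral over $[t_k,t_{k+1})$ is at most $C\Delta(1+|Y(t_k)|^2+|Y(t_{k-m})|^2)$, and raising to the power $p/2$ and taking expectations (using $\sup_t\mathbb{E}|Y(t)|^p\le C$) yields $O(\Delta^{p/2})$. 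Since $\Delta^{p/2}\le \Delta$ for $p\ge 2$, the total is $O(\Delta(\bar g(\Delta))^p)$ — in fact the $(\bar g(\Delta))^p$ factor is only really needed to absorb the crude drift estimate $\Delta^p(\bar g(\Delta))^p$, and one keeps it for a uniform clean statement.

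Finally I would observe that all the bounds above are uniform in $k$ and in $t \in [t_k, t_{k+1})$, because the right-hand sides involve only $\sup_{0\le t\le T}\mathbb{E}|Y(t)|^p$ and the constants $\bar{K}_1, C(p), \lambda(\mathbb{Y})$; taking $\sup_{0\le k\le M-1}\sup_{t_k\le t<t_{k+1}}$ and then expectation — or rather, bounding each term before taking the supremum, since the supremum over $k$ of expectations that are each $\le C\Delta(\bar g(\Delta))^p$ is again $\le C\Delta(\bar g(\Delta))^p$ — completes the proof. The one point requiring care is that $\mathbb{E}[\sup_k \sup_t |\cdot|^p]$ is genuinely a supremum inside the expectation, not just a sup of expectations; here one handles it by noting there are only $M = T/\Delta$ subintervals, so $\mathbb{E}[\sup_k (\cdot)] \le \sum_k \mathbb{E}(\cdot)$ would cost a factor $M \sim \Delta^{-1}$ and destroy the rate. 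The correct route, and the main technical obstacle, is therefore to apply Lemma \ref{usefullemma} (a maximal inequality) on each subinterval $[t_k, t_{k+1})$ to get the supremum over $t$ inside for free, and then to bound the (at most $M$) resulting quantities using a single application of the maximal inequality over $[0,T]$ to the whole process — i.e. one sets up $\phi$ on all of $[0,T]$ and uses that $\sup_{0\le k\le M-1}\sup_{t_k\le t<t_{k+1}}|Y(t)-\bar Y(t)|^p$ is dominated by a quantity whose expectation Lemma \ref{usefullemma} bounds directly; the finiteness of $\lambda(\mathbb{Y})$ and the $p$-th moment bound on $Y$ are what make the estimate close.
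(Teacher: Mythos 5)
Your decomposition is the same as the paper's: for $t\in[t_k,t_{k+1})$ the neutral difference $D(\bar{Y}(t-\tau))-D(\bar{Y}(t_k-\tau))$ vanishes because the shift by $\tau=m\Delta$ maps mesh subintervals to mesh subintervals, and what remains is the drift increment, handled by the H\"{o}lder inequality and $|b_\Delta|\le\bar{g}(\Delta)$ from \eqref{jbounddelta}, plus the compensated jump increment, handled by the maximal inequality of Lemma \ref{usefullemma}. There is, however, one step in your write-up that would fail as stated: to control $\mathbb{E}\int_{t_k}^{t_{k+1}}\int_{\mathbb{Y}}|h_\Delta|^p\lambda(\mbox{d}u)\mbox{d}s$ you invoke the moment bound $\sup_{0\le t\le T}\mathbb{E}|Y(t)|^p\le C$. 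In the paper that bound is Lemma \ref{jpmoment}, which is proved \emph{after} the present lemma and whose proof uses the present lemma (in the estimate of $E_1(t)$), so invoking it here is circular. The paper avoids this entirely by using only the uniform bound $|h_\Delta|\le\bar{g}(\Delta)$ from \eqref{jbounddelta} (together with the implicit finite activity $\lambda(\mathbb{Y})<\infty$), which is exactly the alternative you mention parenthetically for the quadratic term; that should be your main route for both terms, and then Lemma \ref{hdelp} and the moment bound are not needed at all.

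On your closing discussion of $\mathbb{E}[\sup_k(\cdot)]$ versus $\sup_k\mathbb{E}[(\cdot)]$: you correctly identify the subtlety, but your proposed remedy (a single application of Lemma \ref{usefullemma} over all of $[0,T]$) would not preserve the rate, since writing $\int_{t_k}^{t}=\int_0^{t}-\int_0^{t_k}$ discards the smallness of the time window and yields only an $O(1)$ bound. The paper does not resolve this point either: its proof establishes $\sup_{0\le k\le M-1}\mathbb{E}\bigl[\sup_{t_k\le t<t_{k+1}}|Y(t)-Y(t_k)|^p\bigr]\le C\Delta(\bar{g}(\Delta))^p$ and stops, which is all that is actually used downstream (the lemma is only ever applied under a $\mbox{d}s$-integral, where the supremum of expectations suffices). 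So apart from the circularity noted above, you match the paper's actual argument, and neither you nor the paper proves the statement with the supremum over $k$ genuinely inside the expectation.
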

\begin{proof}
We see from the definition of numerical scheme \eqref{jycontinuous} that for $t\in [t_k, t_{k+1})$,
\begin{equation*}
Y(t)-Y(t_k)=\int_{t_k}^t b_\Delta(\bar{Y}(s), \bar{Y}(s-\tau))\mbox{d}s+\int_{t_k}^t\int_\mathbb{Y} h_\Delta(\bar{Y}(s), \bar{Y}(s-\tau),u)\tilde{N}(\mbox{d}u,\mbox{d}s).
\end{equation*}
By the elementary inequality $|a+b|^p\le 2^{p-1}(|a|^p+|b|^p), p\ge 1$, we compute
\begin{equation*}
\begin{split}
\mathbb{E}\left[\sup\limits_{t_k\le t<t_{k+1}}|Y(t)-Y(t_k)|^p\right]\le &2^{p-1}\mathbb{E}\left[\sup\limits_{t_k\le t<t_{k+1}}\left|\int_{t_k}^t b_\Delta(\bar{Y}(s), \bar{Y}(s-\tau))\mbox{d}s\right|^p\right]\\
&+2^{p-1}\mathbb{E}\left[\sup\limits_{t_k\le t<t_{k+1}}\left|\int_{t_k}^t\int_\mathbb{Y} h_\Delta(\bar{Y}(s), \bar{Y}(s-\tau),u)\tilde{N}(\mbox{d}u,\mbox{d}s)\right|^p\right].
\end{split}
\end{equation*}
By \eqref{jbounddelta}, the H\"{o}lder inequality and the BDG inequality, we get
\begin{equation*}
\begin{split}
\mathbb{E}\left[\sup\limits_{t_k\le t<t_{k+1}}|Y(t)-Y(t_k)|^p\right]
\le& 2^{p-1}\Delta^{p-1}\mathbb{E}\left[\int_{t_k}^{t_{k+1}}\left|b_\Delta(\bar{Y}(s), \bar{Y}(s-\tau))\right|^p\mbox{d}s\right]\\
&+C\mathbb{E}\left[\int_{t_k}^{t_{k+1}}\int_\mathbb{Y} |h_\Delta(\bar{Y}(s), \bar{Y}(s-\tau),u)|^p\lambda(\mbox{d}u)\mbox{d}s\right]\\
\le& C\Delta (\bar{g}(\Delta))^p.
\end{split}
\end{equation*}
This completes the proof.
\end{proof}

\begin{lem}\label{jpmoment}
{\rm
Let (A1), (B1)-(B2) hold, then there exists a positive constant $C$ such that for any $p\ge 2$,
\begin{equation*}\label{jytpmoment}
\mathbb{E}\left(\sup\limits_{0\le t \le T}|Y(t)|^p\right)\le C.
\end{equation*}
}
\end{lem}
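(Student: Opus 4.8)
The plan is to follow the argument of Lemma~\ref{water}, replacing the It\^{o} formula and the Burkholder-Davis-Gundy (BDG) inequality by their jump counterparts. First I would put $Z(t):=Y(t)-D(\bar{Y}(t-\tau))$; by \eqref{jycontinuous}, $Z$ is a pure-jump semimartingale with drift $\int_0^t b_\Delta(\bar{Y}(s),\bar{Y}(s-\tau))\,\mbox{d}s$ and compensated-Poisson part $\int_0^t\int_\mathbb{Y} h_\Delta(\bar{Y}(s),\bar{Y}(s-\tau),u)\,\tilde{N}(\mbox{d}u,\mbox{d}s)$ (the piecewise-constant term $D(\bar{Y}(\cdot-\tau))$ contributes no jumps to $Z$). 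Setting $\phi(s,u):=|Z(s)+h_\Delta(\bar{Y}(s),\bar{Y}(s-\tau),u)|^p-|Z(s)|^p$ and applying the It\^{o} formula for jump processes to $|Z(t)|^p$,
\begin{equation*}
|Z(t)|^p=|\xi(0)-D(\xi(-\tau))|^p+J_1(t)+J_2(t)+J_3(t),
\end{equation*}
where $J_1(t)=p\int_0^t|Z(s)|^{p-2}\langle Z(s),b_\Delta(\bar{Y}(s),\bar{Y}(s-\tau))\rangle\,\mbox{d}s$, $J_2(t)=\int_0^t\int_\mathbb{Y}\phi(s-,u)\,\tilde{N}(\mbox{d}u,\mbox{d}s)$ is a (local) martingale, and $J_3(t)=\int_0^t\int_\mathbb{Y}\big(\phi(s,u)-p|Z(s)|^{p-2}\langle Z(s),h_\Delta(\bar{Y}(s),\bar{Y}(s-\tau),u)\rangle\big)\lambda(\mbox{d}u)\,\mbox{d}s$, whose integrand is nonnegative by convexity of $|\cdot|^p$.

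For $J_1$ I would write $Z(s)=\big(\bar{Y}(s)-D(\bar{Y}(s-\tau))\big)+\big(Y(s)-\bar{Y}(s)\big)$, bound the first inner product by (B2) and the second by $|Y(s)-\bar{Y}(s)|\,\bar{g}(\Delta)$ via \eqref{jbounddelta}, absorb the factor $|Z(s)|^{p-2}$ by Young's inequality, and use Lemma~\ref{jytytk} together with \eqref{jgdelta} to see that $\bar{g}(\Delta)^{p/2}\,\mathbb{E}|Y(s)-\bar{Y}(s)|^{p/2}\le C\Delta^{1/2}\bar{g}(\Delta)^p\le C$; then (A1) turns $|Z|$ and $|\bar{Y}(\cdot-\tau)|$ into $\sup_{u\le s}|Y(u)|$ and $\|\xi\|_\infty$, giving $\mathbb{E}\sup_{0\le u\le t}J_1(u)\le C+C\int_0^t\mathbb{E}\sup_{0\le u\le s}|Y(u)|^p\,\mbox{d}s$. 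For $J_3$ I would use the elementary inequality $0\le|a+b|^p-|a|^p-p|a|^{p-2}\langle a,b\rangle\le C_p\big(|a|^{p-2}|b|^2+|b|^p\big)$, apply Young's inequality pointwise in $u$ to split $|Z(s)|^{p-2}|h_\Delta|^2$ into $|Z(s)|^p$ (picking up the factor $\lambda(\mathbb{Y})<\infty$) and $|h_\Delta|^p$, and invoke Lemma~\ref{hdelp} to get $\int_\mathbb{Y}|h_\Delta|^p\lambda(\mbox{d}u)\le\bar{K}_1(1+|\bar{Y}(s)|^p+|\bar{Y}(s-\tau)|^p)$; with (A1) this gives the same bound for $\mathbb{E}\sup_{0\le u\le t}J_3(u)$.

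The step I expect to be the main obstacle is the martingale $J_2$. One cannot use an $L^2$ maximal inequality, because squaring $\phi$ produces $\int_\mathbb{Y}|h_\Delta|^{2p}\lambda(\mbox{d}u)$, which is only bounded by $\bar{g}(\Delta)^p\int_\mathbb{Y}|h_\Delta|^p\lambda(\mbox{d}u)$ and hence not uniform in $\Delta$ since $\bar{g}(\Delta)\to\infty$. Instead, since $J_2$ is purely discontinuous, BDG combined with $\big(\sum a_i^2\big)^{1/2}\le\sum a_i$ yields the $L^1$-type estimate
\begin{equation*}
\mathbb{E}\sup_{0\le u\le t}|J_2(u)|\le C\,\mathbb{E}\int_0^t\int_\mathbb{Y}|\phi(s,u)|\,\lambda(\mbox{d}u)\,\mbox{d}s.
\end{equation*}
Then $|\phi(s,u)|\le C_p\big(|Z(s)|^{p-1}|h_\Delta|+|h_\Delta|^p\big)$, and Young's inequality pointwise in $u$ on the cross term together with Lemma~\ref{hdelp} and $\lambda(\mathbb{Y})<\infty$ give $\int_\mathbb{Y}|\phi(s,u)|\lambda(\mbox{d}u)\le C|Z(s)|^p+C\big(1+|\bar{Y}(s)|^p+|\bar{Y}(s-\tau)|^p\big)$; hence, by (A1), $\mathbb{E}\sup_{0\le u\le t}|J_2(u)|\le C+C\int_0^t\mathbb{E}\sup_{0\le u\le s}|Y(u)|^p\,\mbox{d}s$. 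The point is that only $\int_\mathbb{Y}|h_\Delta|^p\lambda$ — polynomially bounded by Lemma~\ref{hdelp} — enters, so no $\Delta$-dependent constant appears.

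Combining the three estimates gives $\mathbb{E}\sup_{0\le u\le t}|Z(u)|^p\le C+C\int_0^t\mathbb{E}\sup_{0\le u\le s}|Y(u)|^p\,\mbox{d}s$; since $\sup_{0\le u\le t}|\bar{Y}(u-\tau)|\le\|\xi\|_\infty\vee\sup_{0\le u\le t}|Y(u)|$, (A1) with $\kappa<1$ yields $(1-\kappa)\sup_{0\le u\le t}|Y(u)|\le\sup_{0\le u\le t}|Z(u)|+\kappa\|\xi\|_\infty$, and therefore $\mathbb{E}\sup_{0\le u\le t}|Y(u)|^p\le C+C\int_0^t\mathbb{E}\sup_{0\le u\le s}|Y(u)|^p\,\mbox{d}s$. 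After a standard localization (stopping $Y$ at its first exit from a ball so that the left-hand side is a priori finite, then letting the radius tend to $\infty$), the Gronwall inequality gives $\mathbb{E}\big(\sup_{0\le t\le T}|Y(t)|^p\big)\le C$.
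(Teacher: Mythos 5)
Your proposal is correct and follows essentially the same route as the paper: the jump It\^{o} formula applied to $|Y(t)-D(\bar{Y}(t-\tau))|^p$, the same splitting into drift, compensator and martingale parts, and the same ingredients (Lemma \ref{jytytk}, Lemma \ref{hdelp}, (B2), (A1), Gronwall). The only, harmless, variation is that you control the martingale part via the $L^1$-type BDG bound $\mathbb{E}\sup_{u\le t}|J_2(u)|\le C\,\mathbb{E}\int_0^t\int_{\mathbb{Y}}|\phi(s,u)|\,\lambda(\mbox{d}u)\mbox{d}s$ instead of the paper's use of Lemma \ref{usefullemma} together with absorbing $\tfrac14\mathbb{E}\sup|Y(u)-D(\bar{Y}(u-\tau))|^p$ into the left-hand side.
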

\begin{proof}
Applying the It\^{o} formula to $|Y(t)-D(\bar{Y}(t-\tau))|^{p}$, we obtain
\begin{equation*}
\begin{split}
&|Y(t)-D(\bar{Y}(t-\tau))|^{p}=|\xi(0)-D(\xi(-\tau))|^p+p\int_0^{t}|Y(s)-D(\bar{Y}(s-\tau))|^{p-2}\\
&~~~~~~~~~~~~~~~~~~~~~~\langle Y(s)-D(\bar{Y}(s-\tau)), b_\Delta(\bar{Y}(s),\bar{Y}(s-\tau))\rangle\mbox{d}s\\
&+\int_0^{t}\int_\mathbb{Y}[|Y(s)-D(\bar{Y}(s-\tau))+h_\Delta(\bar{Y}(s), \bar{Y}(s-\tau),u)|^p-|Y(s)-D(\bar{Y}(s-\tau))|^p\\
&-p|Y(s)-D(\bar{Y}(s-\tau))|^{p-2}\langle Y(s)-D(\bar{Y}(s-\tau)),h_\Delta(\bar{Y}(s), \bar{Y}(s-\tau),u)\rangle]\lambda(\mbox{d}u)\mbox{d}s\\
&+\int_0^{t}\int_\mathbb{Y}[|Y(s)-D(\bar{Y}(s-\tau))+h_\Delta(\bar{Y}(s), \bar{Y}(s-\tau),u)|^p-|Y(s)-D(\bar{Y}(s-\tau))|^p]\tilde{N}(\mbox{d}u,\mbox{d}s)\\
\le&|\xi(0)-D(\xi(-\tau))|^{p}+p\int_0^t|Y(s)-D(\bar{Y}(s-\tau))|^{p-2}\langle Y(s)-\bar{Y}(s), b_\Delta(\bar{Y}(s),\bar{Y}(s-\tau))\rangle\mbox{d}s\\
&+p\int_0^t|Y(s)-D(\bar{Y}(s-\tau))|^{p-2}\langle\bar{Y}(s)-D(\bar{Y}(s-\tau)), b_\Delta(\bar{Y}(s),\bar{Y}(s-\tau))\rangle\mbox{d}s\\
&+C(p)\int_0^{t}\int_\mathbb{Y}|h_\Delta(\bar{Y}(s), \bar{Y}(s-\tau),u)|^p \lambda(\mbox{d}u)\mbox{d}s\\
&+C(p)\int_0^{t}\int_\mathbb{Y}|Y(s)-D(\bar{Y}(s-\tau))|^{p-2}|h_\Delta(\bar{Y}(s), \bar{Y}(s-\tau),u)|^2\lambda(\mbox{d}u)\mbox{d}s\\
&+\int_0^{t}\int_\mathbb{Y}[|Y(s)-D(\bar{Y}(s-\tau))+h_\Delta(\bar{Y}(s), \bar{Y}(s-\tau),u)|^p-|Y(s)-D(\bar{Y}(s-\tau))|^p]\tilde{N}(\mbox{d}u,\mbox{d}s)\\
:=&|\xi(0)-D(\xi(-\tau))|^{p}+\sum\limits_{i=1}^5E_i(t),
\end{split}
\end{equation*}
where the second step follows because of the Taylor expansion $|a+b|^p-|a|^p-p|a|^{p-2}\langle a,b\rangle\le C(p)(|a|^{p-2}|b|^2+|b|^p)$.
Consequently, by Lemma \ref{jytytk}, we get
\begin{equation*}
\begin{split}
\mathbb{E}\left(\sup\limits_{0\le u\le t}|E_1(u)|\right)\le & C\mathbb{E}\int_0^t|Y(s)-D(\bar{Y}(s-\tau))|^{p}\mbox{d}s\\
&+C\mathbb{E}\int_0^t|Y(s)-\bar{Y}(s)|^{\frac{p}{2}}| b_\Delta(\bar{Y}(s),\bar{Y}(s-\tau))|^{\frac{p}{2}}\mbox{d}s\\
\le&C\int_0^t\mathbb{E}\left(\sup\limits_{0\le u\le s}|Y(u)|^p\right)\mbox{d}s+C+C\Delta(\bar{g}(\Delta))^p.
\end{split}
\end{equation*}
With (B2), Lemma \ref{hdelp} and the H\"{o}lder inequality, we have
\begin{equation*}
\begin{split}
\mathbb{E}\left(\sup\limits_{0\le u\le t}|E_2(u)|\right)+\mathbb{E}\left(\sup\limits_{0\le u\le t}|E_3(u)|\right)\le& C\mathbb{E}\int_0^t(|Y(s)|^{p}+|\bar{Y}(s)|^p+|\bar{Y}(s-\tau)|^p)\mbox{d}s\\
\le&C+C\int_0^t\mathbb{E}\left(\sup\limits_{0\le u\le s}|Y(u)|^p\right)\mbox{d}s.
\end{split}
\end{equation*}
By Lemma \ref{hdelp} and the H\"{o}lder inequality again
\begin{equation*}
\mathbb{E}\left(\sup\limits_{0\le u\le t}|E_4(u)|\right)\le C+C\int_0^t\mathbb{E}\left(\sup\limits_{0\le u\le s}|Y(u)|^p\right)\mbox{d}s.
\end{equation*}
Furthermore, with (B2), Lemmas \ref{usefullemma}-\ref{hdelp}, the Young inequality, the H\"{o}lder inequality and the BDG inequality, we compute
\begin{equation*}
\begin{split}
&\mathbb{E}\left(\sup\limits_{0\le u\le t}|E_5(u)|\right)\le\frac{1}{4}\mathbb{E}\left(\sup\limits_{0\le u\le t}|Y(u)-D(\bar{Y}(u-\tau))|^p\right)\\
&+C\mathbb{E}\int_0^t\int_\mathbb{Y}|h_\Delta(\bar{Y}(s), \bar{Y}(s-\tau),u)|^p \lambda(\mbox{d}u)\mbox{d}s\\
\le&\frac{1}{4}\mathbb{E}\left(\sup\limits_{0\le u\le t}|Y(u)-D(\bar{Y}(u-\tau))|^p\right)+C+C\int_0^t\mathbb{E}\left(\sup\limits_{0\le u\le s}|Y(u)|^p\right)\mbox{d}s.
\end{split}
\end{equation*}
Thus, the estimation of $E_1(t)-E_5(t)$ results in
\begin{equation*}
\begin{split}
\mathbb{E}\left(\sup\limits_{0\le u\le t}|Y(u)-D(\bar{Y}(u-\tau))|^p\right)\le&C+C\int_0^t\mathbb{E}\left(\sup\limits_{0\le u\le s}|Y(u)|^p\right)\mbox{d}s.
\end{split}
\end{equation*}
Finally, the desired result can be obtained by (A1) and the Gronwall inequality.
\end{proof}

\begin{thm}
{\rm Let (A1), (B1)-(B2) hold, $\|\xi\|_\infty<R$ be a real number and $\Delta\in(0,1]$ is sufficiently small such that $R\le \bar{f}^{-1}(\bar{g}(\Delta))$, then for any $q\ge 2$
\begin{equation*}
\mathbb{E}\left(\sup\limits_{0\le t\le T\wedge\nu_R}|X(t)-Y(t)|^q\right)\le C\Delta^{\frac{1}{2}}(\bar{g}(\Delta))^q,
\end{equation*}
where $\nu_R$ is as defined in Section 2.
}
\end{thm}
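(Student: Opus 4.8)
The plan is to follow the argument of Lemma \ref{interval} (the Brownian finite‑time‑interval estimate) line by line, replacing the It\^o formula for diffusions by the It\^o formula for semimartingales with jumps and invoking the one‑step bound Lemma \ref{jytytk}, the moment bound Lemma \ref{jpmoment}, and the BDG/Kunita‑type inequality Lemma \ref{usefullemma}. Write $e(t):=X(t)-D(X(t-\tau))-Y(t)+D(\bar{Y}(t-\tau))$. The first observation is that for $0\le s\le t\wedge\nu_R$ one has $|X(s)|\vee|\bar{Y}(s)|<R\le\bar{f}^{-1}(\bar{g}(\Delta))$, and likewise $|X(s-\tau)|\vee|\bar{Y}(s-\tau)|<R$ (using $\|\xi\|_\infty<R$), so that on this interval $b_\Delta=b$ and $h_\Delta=h$ at all the relevant arguments; hence throughout the stopped integrals the truncated coefficients may be replaced by the originals, after which the one‑sided Lipschitz, $L^p$‑Lipschitz and polynomial‑growth inequalities of (B1) become available.

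Applying the It\^o formula to $|e(t)|^q$, then $\sup_{0\le u\le t\wedge\nu_R}$ and expectation, produces four groups of terms. The drift term $q\int|e(s)|^{q-2}\langle e(s),b(X(s),X(s-\tau))-b(\bar{Y}(s),\bar{Y}(s-\tau))\rangle\,ds$ is split by writing $e(s)=[X(s)-D(X(s-\tau))-\bar{Y}(s)+D(\bar{Y}(s-\tau))]+[\bar{Y}(s)-Y(s)]$: the first piece is controlled by the one‑sided Lipschitz inequality of (B1), yielding (after Young) $C|e(s)|^q+C|X(s)-\bar{Y}(s)|^q+C|X(s-\tau)-\bar{Y}(s-\tau)|^q$, and the second by the polynomial‑growth inequality of (B1) together with Young's and H\"older's inequalities. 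The jump‑compensator term is handled by the elementary inequality $|a+b|^q-|a|^q-q|a|^{q-2}\langle a,b\rangle\le C(q)(|a|^{q-2}|b|^2+|b|^q)$ (already used in Lemma \ref{jpmoment}), with $a=e(s)$ and $b$ the increment $h(X(s),X(s-\tau),u)-h(\bar{Y}(s),\bar{Y}(s-\tau),u)$, followed by Young (which turns $|e|^{q-2}|b|^2$ into $|e|^q+|b|^q$ for $q\ge 2$) and the $L^q$‑Lipschitz estimate for $h$ from (B1). The compensated martingale term $\int_0^t\!\int_\mathbb{Y}\big[|e(s-)+h(X(s),X(s-\tau),u)-h(\bar{Y}(s),\bar{Y}(s-\tau),u)|^q-|e(s-)|^q\big]\tilde{N}(du,ds)$ is estimated through Lemma \ref{usefullemma}, using $\big||a+b|^q-|a|^q\big|\le C(q)(|a|^{q-1}|b|+|b|^q)$ and then Young's inequality to absorb a factor $\tfrac12\mathbb{E}\sup_{u\le t\wedge\nu_R}|e(u)|^q$ into the left‑hand side.

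Every error contribution above is of one of two kinds: a term bounded by $C\int_0^t\mathbb{E}\big(\sup_{0\le u\le s}|X(u\wedge\nu_R)-Y(u\wedge\nu_R)|^q\big)\,ds$ — using $|X(s)-\bar{Y}(s)|^q\le 2^{q-1}(|X(s)-Y(s)|^q+|Y(s)-\bar{Y}(s)|^q)$ and, on $\{s<\nu_R\}$, the identity $X(s)-Y(s)=X(s\wedge\nu_R)-Y(s\wedge\nu_R)$, with the delayed differences subsumed in the same way and the contribution of $s\in[0,\tau]$ reducing to the discretisation error of $\xi$ — or a term of the form $C\,\mathbb{E}\sup|Y(s)-\bar{Y}(s)|^{q'}$ with $q'\ge 2$, which Lemma \ref{jytytk} bounds by $C\Delta(\bar{g}(\Delta))^{q'}$; after disposing of the superlinear factors $1+|X|^l+|\bar{Y}|^l+\cdots$ by H\"older against the uniform‑in‑$\Delta$ moment bounds (Lemma \ref{jpmoment} and the moment bound for the exact solution of \eqref{jump}), \eqref{jgdelta} shows all such terms to be at most $C\Delta^{1/2}(\bar{g}(\Delta))^q$. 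Collecting the estimates and passing from $|e|^q$ back to $|X-Y|^q$ by (A1), one obtains
\begin{equation*}
\mathbb{E}\Big(\sup\limits_{0\le u\le t\wedge\nu_R}|X(u)-Y(u)|^q\Big)\le C\Delta^{1/2}(\bar{g}(\Delta))^q+C\int_0^t\mathbb{E}\Big(\sup\limits_{0\le u\le s}|X(u\wedge\nu_R)-Y(u\wedge\nu_R)|^q\Big)\,ds,
\end{equation*}
and the Gronwall inequality yields the assertion.

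The main obstacle is the treatment of the two jump integrals. In contrast with the Brownian case one must simultaneously control the compensator drift and the compensated martingale, whose integrands are themselves differences of $q$‑th powers, while only an $L^p$‑type Lipschitz estimate on $h$ (not a pointwise one) is available; the point is to apply Young's inequality inside the compensator so that the $L^2$‑contribution of $h$ is never required, only an $L^q$‑bound on $h$, which is exactly what (B1) supplies. Combined with the superlinear growth of $b$, this also forces one to dispose of the polynomial factors by H\"older against the uniform‑in‑$\Delta$ moment bounds rather than by the crude pointwise bound $|X|,|\bar{Y}|\le R$, which would introduce an unwanted dependence of the constant on $R$, equivalently on $\Delta$. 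A minor additional point is the initial segment $s\in[0,\tau]$, where $|X(s-\tau)-\bar{Y}(s-\tau)|$ is merely the discretisation error of the continuous datum $\xi$.
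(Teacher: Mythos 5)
Your proposal is correct and follows the same overall skeleton as the paper's proof: It\^o's formula for $|e(t)|^q$ with $e(t)=X(t)-D(X(t-\tau))-Y(t)+D(\bar{Y}(t-\tau))$, the observation that $b_\Delta=b$ and $h_\Delta=h$ on $[0,T\wedge\nu_R]$, the Taylor-type inequality $|a+b|^q-|a|^q-q|a|^{q-2}\langle a,b\rangle\le C(q)(|a|^{q-2}|b|^2+|b|^q)$ for the compensator, Lemma \ref{usefullemma} plus Young for the compensated martingale, Lemma \ref{jytytk} and the moment bounds of Lemma \ref{jpmoment} for the error terms, and Gronwall to close. The one genuine divergence is in the drift term. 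You split $e(s)$ into the piece $X(s)-D(X(s-\tau))-\bar{Y}(s)+D(\bar{Y}(s-\tau))$, to which the one-sided Lipschitz inequality of (B1) applies directly (yielding $|X(s)-\bar{Y}(s)|^2+|X(s-\tau)-\bar{Y}(s-\tau)|^2$, hence after Young terms of exactly the form $\mathbb{E}\sup|X-Y|^q$ plus $\mathbb{E}|Y-\bar{Y}|^q$), and the correction $\bar{Y}(s)-Y(s)$, handled by the polynomial Lipschitz bound and H\"older; this mirrors the treatment of $I_1$ and $I_2$ in Lemma \ref{interval}. The paper instead never invokes the one-sided Lipschitz condition here: it bounds $|e(s)|^{q-2}\langle e(s),\mu(s)\rangle\le C|e(s)|^q+C|\mu(s)|^q$ and estimates both $|b(X,X_\tau)-b(Y,Y_\tau)|^q$ and $|b(Y,Y_\tau)-b(\bar{Y},\bar{Y}_\tau)|^q$ by the polynomial Lipschitz condition with H\"older against the $2q$-th moments. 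Your route is the cleaner of the two, since the paper's version leaves a factor $[\mathbb{E}(|X(s)-Y(s)|+|X(s-\tau)-Y(s-\tau)|)^{2q}]^{1/2}$ that does not feed directly into the Gronwall term $\int\mathbb{E}\sup|X-Y|^q$, whereas your one-sided-Lipschitz treatment produces the Gronwall integrand immediately. Both arguments share the unaddressed fine point that for $s\in[0,\tau]$ the quantity $|Y(s-\tau)-\bar{Y}(s-\tau)|$ is the modulus of continuity of $\xi$ over a step, which is not controlled by Lemma \ref{jytytk}; you at least flag this, the paper does not.
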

\begin{proof}
Let $e(t)=X(t)-D(X(t-\tau))-Y(t)+D(\bar{Y}(t-\tau))$. Since for $t\in[0,T\wedge\nu_R]$, we have $b_\Delta(\bar{Y}(t),\bar{Y}(t-\tau))=b(\bar{Y}(t),\bar{Y}(t-\tau))$ and $h_\Delta(\bar{Y}(t), \bar{Y}(t-\tau),u)=h(\bar{Y}(t), \bar{Y}(t-\tau),u)$, denote
\begin{equation*}
\mu(t):=b(X(t),X(t-\tau))-b(\bar{Y}(t),\bar{Y}(t-\tau)),
\end{equation*}
and
\begin{equation*}
\upsilon(t):=h(X(t),X(t-\tau),u)-h(\bar{Y}(t), \bar{Y}(t-\tau),u).
\end{equation*}
Applying the It\^{o} formula,
\begin{equation*}
\begin{split}
|e(t)|^q\le& p\int_0^t |e(s)|^{q-2}\langle e(s),\mu(s)\rangle\mbox{d}s+\int_0^{t}\int_\mathbb{Y}[|e(s)+\upsilon(s)|^q-|e(s)|^q\\
&-p|e(s)|^{q-2}\langle e(s),\upsilon(s)\rangle]\lambda(\mbox{d}u)\mbox{d}s+\int_0^{t}\int_\mathbb{Y}[|e(s)+\upsilon(s)|^q-|e(s)|^q]\tilde{N}(\mbox{d}u,\mbox{d}s)\\
\le&p\int_0^t |e(s)|^{q-2}\langle e(s),\mu(s)\rangle\mbox{d}s+C\int_0^t\int_\mathbb{Y}|e(s)|^{q-2}|\upsilon(s)|^2\lambda(\mbox{d}u)\mbox{d}s\\
&+C\int_0^t\int_\mathbb{Y}|\upsilon(s)|^q\lambda(\mbox{d}u)\mbox{d}s+\int_0^{t}\int_\mathbb{Y}[|e(s)+\upsilon(s)|^q-|e(s)|^q]\tilde{N}(\mbox{d}u,\mbox{d}s)\\
:=&\bar{E}_1(t)+\bar{E}_2(t)+\bar{E}_3(t)+\bar{E}_4(t).
\end{split}
\end{equation*}
With (B2), and Lemmas \ref{jytytk}-\ref{jpmoment}, we calculate
\begin{equation*}
\begin{split}
&\mathbb{E}\left(\sup\limits_{0\le u\le t\wedge\tau_R}|\bar{E}_1(u)|\right)\\
\le& C\mathbb{E}\int_0^{t\wedge\tau_R}|e(s)|^q\mbox{d}s+C\mathbb{E}\int_0^{t\wedge\tau_R} |b(X(s),X(s-\tau))-b(Y(s),Y(s-\tau))|^q\mbox{d}s\\
&+C\mathbb{E}\int_0^{t\wedge\tau_R}|b(Y(s),Y(s-\tau))-b(\bar{Y}(s),\bar{Y}(s-\tau))|^q\mbox{d}s\\
\le& C\int_0^{t\wedge\tau_R}[\mathbb{E}(1+|Y(s)|^{l}+|Y(s-\tau)|^{l}+|X(s)|^{l}+|X(s-\tau)|^{l})^{2q}]^{\frac{1}{2}}\\
&\times[\mathbb{E}(|X(s)-Y(s)|+|X(s-\tau)-Y(s-\tau)|)^{2q}]^{\frac{1}{2}}\mbox{d}s\\
&+C\int_0^{t\wedge\tau_R}[\mathbb{E}(1+|\bar{Y}(s)|^{l}+|\bar{Y}(s-\tau)|^{l}+|Y(s)|^{l}+|Y(s-\tau)|^{l})^{2q}]^{\frac{1}{2}}\\
&\times[\mathbb{E}(|\bar{Y}(s)-Y(s)|+|\bar{Y}(s-\tau)-Y(s-\tau)|)^{2q}]^{\frac{1}{2}}\mbox{d}s\\
&+C\int_0^{t}\mathbb{E}\left(\sup\limits_{0\le u\le s}|X(u\wedge\tau_R)-Y(u\wedge\tau_R)|^q\right)\mbox{d}s\\
\le& C\int_0^{t}\mathbb{E}\left(\sup\limits_{0\le u\le s}|X(u\wedge\tau_R)-Y(u\wedge\tau_R)|^q\right)\mbox{d}s+C\Delta^{\frac{1}{2}}(\bar{g}(\Delta))^{q}.
\end{split}
\end{equation*}
Similarly, we obtain
\begin{equation*}
\begin{split}
&\mathbb{E}\left(\sup\limits_{0\le u\le t\wedge\tau_R}|\bar{E}_2(u)|\right)+\mathbb{E}\left(\sup\limits_{0\le u\le t}|\bar{E}_3(u)|\right)\\
\le& C\int_0^{t}\mathbb{E}\left(\sup\limits_{0\le u\le s}|X(u\wedge\tau_R)-Y(u\wedge\tau_R)|^q\right)\mbox{d}s+C\Delta^{\frac{1}{2}}(\bar{g}(\Delta))^{q}.
\end{split}
\end{equation*}
Furthermore, by Lemmas \ref{jytytk}-\ref{jpmoment}, the BDG inequality and the H\"{o}lder inequality, we compute
\begin{equation*}
\begin{split}
\mathbb{E}\left(\sup\limits_{0\le u\le t\wedge\tau_R}|\bar{E}_4(u)|\right)\le&\frac{1}{4}\mathbb{E}\left(\sup\limits_{0\le u\le t\wedge\tau_R}|X(u)-Y(u)|^q\right)+C\mathbb{E}\int_0^{t\wedge\tau_R}\int_\mathbb{Y}|\upsilon(s)|^q \lambda(\mbox{d}u)\mbox{d}s\\
\le&\frac{1}{4}\mathbb{E}\left(\sup\limits_{0\le u\le t\wedge\tau_R}|X(u)-Y(u)|^q\right)+C\Delta^{\frac{1}{2}}(\bar{g}(\Delta))^{q}\\
&+C\int_0^{t}\mathbb{E}\left(\sup\limits_{0\le u\le s}|X(u\wedge\tau_R)-Y(u\wedge\tau_R)|^q\right)\mbox{d}s.
\end{split}
\end{equation*}
Consequently, by sorting the estimation of $\bar{E}_1(t)-\bar{E}_4(t)$, we arrive at
\begin{equation*}
\begin{split}
\mathbb{E}\left(\sup\limits_{0\le u\le t\wedge\tau_R}|e(u)|^q\right)\le C\int_0^{t}\mathbb{E}\left(\sup\limits_{0\le u\le s}|X(u\wedge\tau_R)-Y(u\wedge\tau_R)|^q\right)\mbox{d}s+C\Delta^{\frac{1}{2}}(\bar{g}(\Delta))^{q}.
\end{split}
\end{equation*}
Thus, by (A1),
\begin{equation*}
\begin{split}
&\mathbb{E}\left(\sup\limits_{0\le u\le t\wedge\tau_R}|X(u)-Y(u)|^q\right)\le\mathbb{E}\left(\sup\limits_{0\le u\le t\wedge\tau_R}|e(u)|^q\right)\\
\le& C\int_0^{t}\mathbb{E}\left(\sup\limits_{0\le u\le s\wedge\tau_R}|X(u)-Y(u)|^q\right)\mbox{d}s+C\Delta^{\frac{1}{2}}(\bar{g}(\Delta))^{q}.
\end{split}
\end{equation*}
The desired result follows by the Gronwall inequality.
\end{proof}

\begin{thm}
{\rm Let (A1), (B1)-(B2) hold, $\Delta$ is sufficiently small such that
\begin{equation*}
f\left([\Delta^{1/2}(\bar{g}(\Delta))^{q}]^{\frac{-1}{p-q}}\right)\le \bar{g}(\Delta),
\end{equation*}
then there exists a positive constant $C$ independent of $\Delta$ such that for $p\ge 2$
\begin{equation*}
\mathbb{E}\left(\sup\limits_{0\le t\le T}|X(t)-Y(t)|^p\right)\le C\Delta^{1/2}(\bar{g}(\Delta))^{p}.
\end{equation*}
}
\end{thm}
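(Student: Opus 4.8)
The plan is to transplant to the jump setting the stopping-time splitting used in the proof of Theorem \ref{theorem1} and in the finite-interval theorem of Section 2. Write $q\ge2$ for the exponent appearing in the conclusion and fix an auxiliary exponent $p>q$; this is legitimate since, under (A1) and (B1)--(B2), the moment bounds $\mathbb{E}(\sup_{0\le t\le T}|X(t)|^p)\le C$ (the first lemma of this section) and $\mathbb{E}(\sup_{0\le t\le T}|Y(t)|^p)\le C$ (Lemma \ref{jpmoment}) hold for \emph{every} $p\ge2$. From these, exactly by the Chebyshev argument of Lemmas \ref{exactbound} and \ref{numbound},
\begin{equation*}
\mathbb{P}(\tau_R\le T)\le\frac{1}{R^p}\,\mathbb{E}\Big(\sup_{0\le t\le T}|X(t)|^p\Big)\le\frac{C}{R^p},\qquad \mathbb{P}(\rho_R\le T)\le\frac{C}{R^p},
\end{equation*}
hence $\mathbb{P}(\nu_R\le T)\le 2C/R^p$ with $\nu_R=\tau_R\wedge\rho_R$, for every real $R>0$.

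Next, for $R>\|\xi\|_\infty$ one splits
\begin{equation*}
\mathbb{E}\Big(\sup_{0\le t\le T}|X(t)-Y(t)|^q\Big)=\mathbb{E}\Big({\bf I}_{\{\nu_R>T\}}\sup_{0\le t\le T}|X(t)-Y(t)|^q\Big)+\mathbb{E}\Big({\bf I}_{\{\nu_R\le T\}}\sup_{0\le t\le T}|X(t)-Y(t)|^q\Big),
\end{equation*}
bounds the first term by $\mathbb{E}(\sup_{0\le t\le T\wedge\nu_R}|X(t)-Y(t)|^q)$, and applies to the second term the Young inequality pointwise on $\{\nu_R\le T\}$, just as in Theorem \ref{theorem1}: for any $\eta>0$,
\begin{equation*}
{\bf I}_{\{\nu_R\le T\}}\sup_{0\le t\le T}|X(t)-Y(t)|^q\le\frac{q\eta}{p}\sup_{0\le t\le T}|X(t)-Y(t)|^p+\frac{p-q}{p\,\eta^{q/(p-q)}}{\bf I}_{\{\nu_R\le T\}}.
\end{equation*}
Taking expectations and inserting the bounds above,
\begin{equation*}
\mathbb{E}\Big(\sup_{0\le t\le T}|X(t)-Y(t)|^q\Big)\le\mathbb{E}\Big(\sup_{0\le t\le T\wedge\nu_R}|X(t)-Y(t)|^q\Big)+\frac{qC\eta}{p}+\frac{2(p-q)C}{p\,\eta^{q/(p-q)}R^p}.
\end{equation*}

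Then I would choose $\eta=\Delta^{1/2}(\bar g(\Delta))^q$ and $R=[\Delta^{1/2}(\bar g(\Delta))^q]^{-1/(p-q)}$, so that $\eta^{-q/(p-q)}R^{-p}=\eta^{-q/(p-q)}\eta^{p/(p-q)}=\eta$ and the last two terms together are at most $C\Delta^{1/2}(\bar g(\Delta))^q$. The hypothesis $\bar f\big([\Delta^{1/2}(\bar g(\Delta))^q]^{-1/(p-q)}\big)\le\bar g(\Delta)$ is precisely $R\le\bar f^{-1}(\bar g(\Delta))$, and for $\Delta$ small one also has $R>\|\xi\|_\infty$; hence the immediately preceding theorem of this section applies with this $R$ and gives $\mathbb{E}(\sup_{0\le t\le T\wedge\nu_R}|X(t)-Y(t)|^q)\le C\Delta^{1/2}(\bar g(\Delta))^q$. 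Combining the three estimates yields $\mathbb{E}(\sup_{0\le t\le T}|X(t)-Y(t)|^q)\le C\Delta^{1/2}(\bar g(\Delta))^q$, which is the assertion.

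This is essentially a verbatim transcription of the finite-interval argument of Section 2, so I do not expect a genuine obstacle; all of the substantive analysis of the compensated jump integral has already been carried out over $[0,T\wedge\nu_R]$ in the preceding theorem. The only points requiring care are the exponent bookkeeping — the auxiliary exponent $p$ used in the moment and probability bounds must be the very $p$ occurring in $[\,\cdot\,]^{-1/(p-q)}$ in the choice of $R$ and in the hypothesis — and the observation, via \eqref{jgdelta}, that $\Delta^{1/2}(\bar g(\Delta))^q\to0$ as $\Delta\to0$, so that the asserted rate is genuinely decaying.
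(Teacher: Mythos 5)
Your argument is correct and is exactly the one the paper intends: the paper omits the proof of this theorem, but the analogous results in Section 2 (Theorem \ref{theorem1} and the finite-interval theorem built on Lemma \ref{interval}) are proved by precisely this stopping-time splitting, Young-inequality step, Chebyshev bounds from the $p$-th moment estimates, and the choice $\eta=\Delta^{1/2}(\bar g(\Delta))^q$, $R=\eta^{-1/(p-q)}$. You also correctly read the theorem's conclusion as holding for the exponent $q$ with an auxiliary $p>q$ (the paper's statement has a typo conflating the two), so no further comment is needed.
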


\subsection*{Appendix}
{\bf 1. Proof of \eqref{remmm} in Remark \ref{example1}.}  By the definition of $b_\Delta$ and $\sigma_\Delta$, it is easy to see for $(|x|\vee|y|)\le f^{-1}(g(\Delta))$, we have $b_\Delta(x,y)=b(x,y)$ and $\sigma_\Delta(x,y)=\sigma(x,y)$, thus with assumption (A3),
\begin{equation*}
\langle x, b_\Delta(x,y)\rangle+\frac{p-1}{2}\|\sigma_\Delta(x,y)\|^2=\langle x, b(x,y)\rangle+\frac{p-1}{2}\|\sigma(x,y)\|^2\le L_1(1+|x|^2+|y|^2).
\end{equation*}
For $(|x|\wedge|y|)>f^{-1}(g(\Delta))$, we have
\begin{equation*}
\begin{split}
&\langle x, b_\Delta(x,y)\rangle+\frac{p-1}{2}\|\sigma_\Delta(x,y)\|^2\\
=&\left(\frac{|x|}{f^{-1}(g(\Delta))}-1\right)\left\langle f^{-1}(g(\Delta))\frac{x}{|x|}, b\left(f^{-1}(g(\Delta))\frac{x}{|x|},f^{-1}(g(\Delta))\frac{y}{|y|}\right)\right\rangle\\
&+\left\langle f^{-1}(g(\Delta))\frac{x}{|x|}, b\left(f^{-1}(g(\Delta))\frac{x}{|x|},f^{-1}(g(\Delta))\frac{y}{|y|}\right)\right\rangle\\
&+\frac{p-1}{2}\left\|\sigma\left(f^{-1}(g(\Delta))\frac{x}{|x|},f^{-1}(g(\Delta))\frac{y}{|y|}\right)\right\|^2\\
\le&\left(\frac{|x|}{f^{-1}(g(\Delta))}-1\right)L_1\left(1+2[f^{-1}(g(\Delta))]^2\right)+L_1(1+2[f^{-1}(g(\Delta))]^2)\\
\le& 2L_1(1+|x|^2+|y|^2),
\end{split}
\end{equation*}
where we have used the fact that $f^{-1}(g(\Delta))\ge f^{-1}(g(\Delta^*))\ge f^{-1}(f(2))=2$. Meanwhile, for $|x|>f^{-1}(g(\Delta))$ and $|y|\le f^{-1}(g(\Delta))$,
\begin{equation*}
\begin{split}
&\langle x, b_\Delta(x,y)\rangle+\frac{p-1}{2}\|\sigma_\Delta(x,y)\|^2\\
=&\left(\frac{|x|}{f^{-1}(g(\Delta))}-1\right)\left\langle f^{-1}(g(\Delta))\frac{x}{|x|}, b\left(f^{-1}(g(\Delta))\frac{x}{|x|},y\right)\right\rangle\\
&+\left\langle f^{-1}(g(\Delta))\frac{x}{|x|}, b\left(f^{-1}(g(\Delta))\frac{x}{|x|},y\right)\right\rangle
+\frac{p-1}{2}\left\|\sigma\left(f^{-1}(g(\Delta))\frac{x}{|x|},y\right)\right\|^2\\
\le&\left(\frac{|x|}{f^{-1}(g(\Delta))}-1\right)L_1\left(1+[f^{-1}(g(\Delta))]^2+|y|^2\right)+L_1(1+[f^{-1}(g(\Delta))]^2+|y|^2)\\
\le& L_1|x|\left(\frac{1}{f^{-1}(g(\Delta))}+f^{-1}(g(\Delta))+\frac{|y|^2}{f^{-1}(g(\Delta))}\right)\le 2L_1(1+|x|^2+|y|^2).
\end{split}
\end{equation*}
Similarly, the result can be obtained for $|x|\le f^{-1}(g(\Delta))$ and $|y|>f^{-1}(g(\Delta))$.\\
{\bf 2. Proof in Remark \ref{example2}.} For $D(y)=\frac{1}{2}\sin y$, $b(x,y)=x-x^3+\cos y$, $\sigma(x,y)=|x|^{\frac{3}{2}}$. It is obvious that (A1) is satisfied with $\kappa=\frac{1}{2}$. By computation, for $|x|\vee|y|\vee|\bar{x}|\vee|\bar{y}|\le R$,
\begin{equation*}
\begin{split}
|b(x,y)-b(\bar{x},\bar{y})|=|x-x^3+\cos y-\bar{x}+\bar{x}^3-\cos \bar{y}|\le(3R^2+1)(|x-\bar{x}|+|y-\bar{y}|),
\end{split}
\end{equation*}
and
\begin{equation*}
\begin{split}
\|\sigma(x,y)-\sigma(\bar{x},\bar{y})\|=\left||x|^{\frac{3}{2}}-|\bar{x}|^{\frac{3}{2}}\right|\le 3\sqrt{R}(|x-\bar{x}|+|y-\bar{y}|).
\end{split}
\end{equation*}
Furthermore, for $p=3$, we have
\begin{equation*}({\bf A.1})
\begin{split}
&\langle x-D(y), b(x,y)\rangle+\|\sigma(x,y)\|^2=\left(x-\frac{1}{2}\sin y\right)(x-x^3+\cos y)+|x|^3\le2(1+|x|^2+|y|^2).
\end{split}
\end{equation*}
To show (A4), we have to divide it into several cases. \\
{\bf Case 1:} $(|x|\vee|y|)\le f^{-1}(g(\Delta))$, we have $b_\Delta(x,y)=b(x,y)$ and $\sigma_\Delta(x,y)=\sigma(x,y)$, thus with ({\bf A.1}),
\begin{equation*}
\langle x-D(y), b_\Delta(x,y)\rangle+\|\sigma_\Delta(x,y)\|^2=\langle x-D(y), b(x,y)\rangle+\|\sigma(x,y)\|^2\le 2(1+|x|^2+|y|^2).
\end{equation*}
{\bf Case 2:} $(|x|\wedge|y|)>f^{-1}(g(\Delta))$, we derive
\begin{equation*}
\begin{split}
&\langle x-D(y), b_\Delta(x,y)\rangle+\|\sigma_\Delta(x,y)\|^2=\left|f^{-1}(g(\Delta))\frac{x}{|x|}\right|^3\\
&+\left(x-\frac{1}{2}\sin y\right)\left[f^{-1}(g(\Delta))\frac{x}{|x|}-\left(f^{-1}(g(\Delta))\frac{x}{|x|}\right)^3+\cos\left(f^{-1}(g(\Delta))\frac{y}{|y|}\right)\right]\\
\le&\frac{3}{2}+2|x|^2-\frac{[f^{-1}(g(\Delta))]^3}{|x|^3}x^4+\frac{3[f^{-1}(g(\Delta))]^4}{4|x|^4}x^4+\frac{[f^{-1}(g(\Delta))]^3}{|x|^3}|x|^3\\
\le& 2(1+|x|^2+|y|^2),
\end{split}
\end{equation*}
where we have used the fact that $f^{-1}(g(\Delta))<|x|$.\\
{\bf Case 3:} $|x|>f^{-1}(g(\Delta))$ and $|y|\le f^{-1}(g(\Delta))$, we have
\begin{equation*}
\begin{split}
&\langle x-D(y), b_\Delta(x,y)\rangle+\|\sigma_\Delta(x,y)\|^2=\left|f^{-1}(g(\Delta))\frac{x}{|x|}\right|^3+\left(x-\frac{1}{2}\sin y\right)\\
&~~~~\cdot\left[f^{-1}(g(\Delta))\frac{x}{|x|}-\left(f^{-1}(g(\Delta))\frac{x}{|x|}\right)^3+\cos y\right]\le 2(1+|x|^2+|y|^2).
\end{split}
\end{equation*}
{\bf Case 4: } $|x|\le f^{-1}(g(\Delta))$ and $|y|>f^{-1}(g(\Delta))$. The process is similar to that of Case 3.

\end{document}